\documentclass[a4paper,11pt,final]{amsart} 
\usepackage{amssymb,amsmath}
\usepackage{color}
\usepackage{a4wide}

\parskip 4pt

\numberwithin{equation}{section}

\newtheorem{theorem}{Theorem}[section]
\newtheorem{proposition}[theorem]{Proposition}
\newtheorem{corollary}[theorem]{Corollary}
\newtheorem{lemma}[theorem]{Lemma}
\newtheorem{definition}[theorem]{Definition}
\newtheorem{remark}[theorem]{Remark}
\newtheorem{example}[theorem]{Example}

\newcommand{\N}{{\mathbb N}}

\newcommand{\R}{{\mathbb R}}
\newcommand{\Rn}{{\mathbb R}^N}
\newcommand{\Sn}{{\mathbb S}^N}

\def\ol{\overline}
\def\fr{\frac}
\newcommand{\vep}{\varepsilon}

\def\grad{\nabla}

\def\bye{\end{document}}
\def\by{\end{proof}\bye}
\def\pl{\partial}
\def\gO{\Omega}\def\erf{\eqref}\def\tim{\times}

\def\ep{\varepsilon}
\def\gd{\delta}
\def\dist{\mathrm{dist}}
\def\cP{\mathcal P}
\def\beq{\begin{equation}}
\def\eeq{\end{equation}}
\def\cC{\mathcal C}


\newcommand{\Pmo}{\mathcal{P}^-_{1}}
\newcommand{\Ppo}{\mathcal{P}^+_{1}}
\newcommand{\Pmk}{\mathcal{P}^-_{k}}
\newcommand{\Ppk}{\mathcal{P}^+_{k}}


\linespread{1.1}

\begin{document}
\parindent=0pt

\title[\textbf{A family of degenerate elliptic operators}]{\textbf{A family of degenerate elliptic operators: maximum principle and its consequences.}}

\author[I. Birindelli, G. Galise, H. Ishii]{Isabeau Birindelli, Giulio Galise, Hitoshi Ishii}

\thanks{ The work was started while IB was visiting HI at the Waseda University, she wishes to thank the institution for the invitation. HI wishes to thank 
Dr. Norihisa Ikoma for his interest 
in Lemma \ref{HH2}. IB and GG were partially supported by GNAMPA-INDAM.
The work of HI was partially supported by the JSPS grants: KAKENHI \#16H03948,  
\#26220702}

\address[I. Birindelli, G. Galise]{Dipartimento di Matematica "G. Castelnuovo"\newline
\indent Sapienza Universit\`a  di Roma \newline
 \indent   P.le Aldo  Moro 2, I--00185 Roma, Italy.}
 \email{isabeau@mat.uniroma1.it, galise@mat.uniroma1.it}
\address[H. Ishii]{Faculty of education and Integrated Arts and Sciences\newline
\indent Waseda University\newline
 \indent 1-6-1 Nishi-Waseda, Shinjuku, Tokyo 169-8050 Japan}
\email{hitoshi.ishii@waseda.jp}

\keywords{maximum principle, fully nonlinear degenerate elliptic PDE, eigenvalue problem}
\subjclass[2010]{ 35J60,
35J70, 
49L25 
} 
\begin{abstract}
In this paper we investigate the validity and the consequences of  the maximum principle for degenerate elliptic 
operators whose higher order term is the sum of $k$ eigenvalues of the Hessian. In particular we shed some light on 
some very unusual phenomena due to the degeneracy of the operator.
We prove moreover Lipschitz  regularity results and  boundary estimates under convexity assumptions on the 
domain.  As a consequence we obtain the existence of solutions of the Dirichlet problem and of principal eigenfunctions.
 \end{abstract}
\maketitle

\section{Introduction}\label{intro} 
In this paper we shall study solutions of Dirichlet problem for  degenerate elliptic 
operators whose higher order term is given by some sort of \lq\lq truncated Laplacian\rq\rq, i.e.

$$\Pmk(D^2u)=\sum_{i=1}^k\lambda_i(D^2u)\quad \mbox{and}\quad  \Ppk(D^2u)=\sum_{i=N-k+1}^N\lambda_i(D^2u),$$
where $\lambda_1(D^2u)\leq \lambda_2(D^2u)\leq\cdots\leq \lambda_N(D^2u)$ are the ordered eigenvalues of the Hessian of $u$, which have lately been investigated in various contexts e.g. \cite{AS}, \cite{CLN1, CLN3}, \cite{CDLV}, \cite{HL1, HL2}, \cite{Sha}, \cite{Wu}. 
We are interested in the case $N\geq 2$ and  $k<N$ since $\mathcal{P}^-_{N}(D^2u)=\mathcal{P}^+_{N}(D^2u)=\Delta u$. In the whole paper solutions are meant in the viscosity sense, see e.g. \cite{CIL}  and Definition \ref{visc}. 

Clearly, for any symmetric matrix $X$, 
$
\Ppk(X)=-\Pmk(-X)
$
hence we will mainly  state the results for $\Pmk$ with obvious equivalents when the operator $\Ppk$ is considered.
Such operators are positively homogeneous of 
degree one and degenerate elliptic. 

In the following we propose to consider the Dirichlet problem
\begin{equation}\label{basic_equation}
\left\{\begin{array}{lc}
{\mathcal P}^\pm_k(D^2u)+H(x,\grad u)+\mu u=f(x) & \text{in }\ \Omega\\
u=0 & \text{on }\ \partial \Omega,
\end{array}
\right.
\end{equation}    
where $\Omega$ is a bounded domain of $\Rn$ and  the Hamiltonian $H\in C(\Omega\times\Rn;\R)$ is assumed to satisfy the structure condition:
\begin{equation}\label{SC1}
\exists\, b\in\R_+\,\,\text{s.t.}\,\,\left|H(x,\xi)\right|\leq b\left|\xi\right|\quad\forall(x,\xi)\in\Omega\times\Rn.
\tag{SC 1}
\end{equation}
The prototype we have in mind is $H(x,\grad u)=b(x)|\grad u|$ or $H(x,\grad u)=b(x)\cdot\grad u$ with 
$b(x)$ bounded continuous function in $\Omega$. 

In particular we want to raise and partially answer  the following questions, which are very intertwined:
\begin{enumerate}
\item Under which conditions do the operators ${\mathcal P}^\pm_k(D^2u)+H(x,\grad u) +\mu u$ satisfy the  maximum principle, be it weak or strong?
\item What are the regularity of the solutions of the Dirichlet problem?
\item Do the principal eigenvalues and corresponding eigenfunctions exist?
\end{enumerate}

In order to be more specific, let us describe what we call maximum or minimum principle in the sense of the \emph{sign propagation property}.

\begin{definition} $F$ satisfies the maximum or weak maximum principle in $\Omega$ if
$$F[u]\geq 0\ \mbox{in}\ \Omega,\quad \limsup_{x\to\partial\Omega} u\leq 0 \ \Longrightarrow \  u\leq 0 \ \mbox{in}\ \Omega.$$
It satisfies the strong maximum principle if 
$$F[u]\geq 0\ \mbox{in}\ \Omega,\quad u\leq 0 \ \mbox{in}\ \Omega\ \Longrightarrow\ \mbox{either}\ u<0 \ \mbox{or}\ u\equiv 0.$$

Respectively,  $F$ satisfies the minimum or weak minimum principle in $\Omega$ if
$$F[u]\leq 0\ \mbox{in}\ \Omega,\quad  \liminf_{x\to\partial\Omega} u\geq 0 \ \Longrightarrow \ u\geq 0 \ \mbox{in}\ \Omega.$$
It satisfies the strong minimum principle if 
$$F[u]\leq 0\ \mbox{in}\ \Omega,\quad u\geq 0 \ \mbox{in}\ \Omega\  \Longrightarrow\ \mbox{either}\ u>0 \ \mbox{or}\ u\equiv 0.$$
\end{definition}
Of course when $F$ is odd then the notions of maximum and minimum principle are equivalent, but here we shall see that they differ quite a lot.

Just to give a flavour of the kind of results that we shall obtain, let us begin by saying that for any $k<N$, the Hopf 
Lemma, the Harnack inequality and the strong minimum principle do not hold 
in general for solutions of (\ref{basic_equation}). 
On the other hand,  if $bR\leq k$, the weak minimum principle holds in any domain $\Omega\subset B_R$. 
For subsolutions, instead, the strong maximum principle will be a consequence of the Hopf Lemma. The condition $bR\leq k$ has been shown to be optimal in a previous work of the second named author with  Vitolo \cite{GV}.
Other phenomena which are unusual with respect to the uniformly elliptic case will be described in subsection \ref{unusual}.

Historically, the maximum (or minimum) principle for degenerate elliptic operators has been mostly studied when the 
degeneracy depends on the points where the operator acts, e.g.
$$Lu={\rm tr}(A(x)D^2u)\ \mbox{with}\ A\geq 0 $$
 or 
$$Lu=\sum_{i=1}^k X_i^2u,$$
where the $X_i$ are vector fields that may fail to generate the whole space, see e.g. the fundamental 
works of Bony \cite{Bo} or Kohn and Nirenberg \cite{KN}.  We shall not even try to enumerate the results in these sub-elliptic contexts. 

Other class of degenerate operators are the quasilinear operators such as the $p$-Laplacian or the 
$\infty$-Laplacian, whose degeneracy depends on the solution itself, but more precisely on the gradient of the 
solution. 
Here also, for the truncated Laplacian,  the \lq\lq direction\rq\rq of the degeneracy depends on the solution but it 
depends on the  eigenvectors of the Hessian of the solution. 
Let us furthermore remark that these operators are neither linear nor variational. 

The operators $\mathcal{P}^\pm_{k}$ have been initially introduced in connection with Riemannian manifolds. In 
particular when the  
manifolds are $k$ convex this was studied by Sha in 
 \cite{Sha}, the case of partially positive curvature was seen by Wu in \cite{Wu}. Later they can be found in \cite[Example 1.8]{CIL}, as examples of fully nonlinear degenerate elliptic operators,
and \cite{AS}, where Ambrosio and Soner have investigated 
the mean curvature flow with arbitrary codimension through a level set approach. More recently, in a PDE context,
we wish to recall the works of Harvey and Lawson  \cite{HL1, HL2} that have given a new geometric interpretation of 
solutions,  while Caffarelli, Li and Nirenberg in \cite{CLN1, CLN3} in their study of degenerate elliptic equations, give 
some  results concerning removable singularities along smooth manifolds for Dirichlet problems associated to $\Pmk$. See also \cite{AGV} for the extended version of the maximum principle and \cite{CDLV} in the case of entire solutions.

In order to describe the results contained in this work let us introduce the generalized principal eigenvalues
\`a la Berestycki, Nirenberg, Varadhan \cite{BNV}.
For the following equation
\begin{equation}\label{eqev}
\Pmk(D^2u)+H(x,\grad u)+\mu u = 0 \ \mbox{in}\ \Omega,
\end{equation}
we define the following \lq\lq generalized principal eigenvalues\rq\rq:
$$\overline{\mu}_k^+=\sup\{\mu\in\R: \exists w>0 \ \mbox{in}\ \overline\Omega   \mbox{ a supersolution of (\ref{eqev})}\},$$
$${\mu}_k^+=\sup\{\mu\in\R: \exists w>0 \ \mbox{in}\ \Omega   \mbox{  a supersolution of (\ref{eqev})}\}$$
and
$$\overline{\mu}_k^-=\sup\{\mu\in\R: \exists w<0 \ \mbox{in}\ \overline\Omega   \mbox{ a subsolution of (\ref{eqev})}\},$$
$${\mu}_k^-=\sup\{\mu\in\R: \exists w<0 \ \mbox{in}\ \Omega   \mbox{ a subsolution of (\ref{eqev})}\}.$$

When we say that $w$ is a supersolution of \eqref{eqev} and $w>0$ in $\ol\gO$ as in the definition of $\ol\mu_k^+$ above, it is implicit that the function $w$ is defined, 
as a real-valued function, and lower semicontinuous in $\ol\gO$. Similar  
assumptions are made in the definition of $\ol\mu_k^-$ above.

It is immediate that $\overline{\mu}_k^\pm\leq{\mu}_k^\pm$ and also, using (\ref{suba}), that ${\mu}_k^-\leq {\mu}_k^+$ and $\overline{\mu}_k^-\leq \overline{\mu}_k^+$ if $H$ is odd in the gradient.
What we prove in section \ref{demi-eigen} is that these values are 
thresholds
for the validity of the weak maximum or the weak minimum principle,
precisely below  $\overline{\mu}_k^-$ and below $\overline{\mu}_k^+$   the minimum principle and respectively the maximum principle holds.  

In order to be able to reach the values ${\mu}_k^+$ and ${\mu}_k^-$, which are the standard upper bounds in the uniformly elliptic case, we shall need some further conditions.
Precisely, if $\Omega\subset B_R$ with $bR<k$ the maximum principle holds  for any $\mu$ since, we prove in Proposition~\ref{infinito} that $\overline{\mu}_k^+={\mu}_k^+=+\infty$. 
For the minimum principle the situation is more delicate. The weak minimum principle holds up to ${\mu}_k^-$ if,  
beside the above condition on $R$, we shall require that
$\Omega$  satisfies a convexity type assumption, precisely that it is the intersection of a family of balls of same radius; in that case we say that $\Omega$ is a \lq\lq hula hoop\rq\rq domain. 
In particular a $C^2$  strictly convex domain is a hula hoop domain, see Proposition~\ref{hulahoop} . 

Under these hypotheses, in Proposition~\ref{supersol}, we prove that for $\mu=\overline{\mu}^-_k$ the minimum 
principle does not hold. This implies also that $\overline{\mu}^-_k= \mu^-_k$, see Theorem \ref{thm3}; let us emphasize 
that the hula hoop condition does not imply the regularity of the domain e.g. the intersection of two balls of same 
radius.
In general the question of whether $\mu^-_k$ and $\overline\mu_k^-$ coincide is an open problem.

\medskip
In the recent paper \cite{BCPR} that had a great influence on this research, Berestycki, Capuzzo Dolcetta, Porretta and Rossi have studied the validity of the maximum principle for degenerate elliptic operators. For that aim they 
introduce another value 
$$\mu^*:=\sup\{\mu\in\R: \exists \Omega^\prime\supset \overline\Omega,  w>0 \ \mbox{in}\ \Omega^\prime, F[u]+\mu u\leq 0 \ \mbox{in} \ \Omega^\prime \}.$$
Observe that for $F[u]:=\Pmk(D^2u)+H(x,\grad u)$, the value $\mu^*\leq\overline{\mu}_k^+$. In \cite{BCPR} they prove that $F[\cdot]+\mu \cdot$ satisfies the maximum principle in $\Omega$ in the viscosity sense if and only if
$\mu<\mu^*$.   In section 4 of that paper, they also study the equality between the different definitions of generalized principal eigenvalues, but the sufficient conditions require that the domain be regular.

\medskip

The existence of solutions for Dirichlet problems are proved in Section \ref{existence} when $\Omega$ is a hula hoop domain. When the operators concern $\Pmk$ for general $k$, the existence  and uniqueness is given provided that the Hamiltonian is Lipschitz in the gradient variable and 
$\mu<\mu_{k,b}^-\leq\mu_{k}^-$, where $\mu_{k,b}^-$  refers to the generalized principal eigenvalue of $\Pmk(D^2\cdot)-b|\grad \cdot|$ with $b$ the Lipschitz constant of $H$. Instead, thanks to  the Lipschitz estimates, for $k=1$ the existence is given without the extra condition on the Hamiltonian and for any $\mu<\mu_1^-$. In the particular case $f\leq 0$ the existence holds  for any $\mu$. Some  questions concerning existence remain open, e.g.  does the existence of solutions holds when $\mu>\mu_1^-$ for a more general class of forcing terms $f$?  is the hula hoop condition optimal?

Of course a natural question is whether these generalized principal eigenvalues correspond to an eigenfunction. 
In the case of uniformly elliptic fully nonlinear operators, this has been proved to be 
the case in different context (see \cite{Ar, BD, BEQ, IY, L, QS}).
We  are able to give a positive answer to this question when $k=1$ and $\Omega$ is a hula hoop 
domain. This will be somehow an application of the global Lipschitz results that are proved in section \ref{compactness}. The proof of the Lipschitz regularity is extremely sleek.

\medskip
It is quite clear that there are a number of open problems.  
Maybe the most important one is whether the global Lipschitz or H\"older regularity of the solutions holds also for $k\geq 2$. This would in 
particular lead to the existence of the principal eigenfunction in that case as well.  
On one hand it is not surprising that the case of $\Pmo$ is simpler since, when the lower order term is  zero,
solutions of $\Pmo(D^2u)=f(x)$ are semiconvex.
On the other hand, it is also the most degenerate of these operators, so it would be very surprising that the case 
$k=1$ and the case $k=N$ give rise to smooth solutions and that it is not the case for the values of $k$ in between.

Still concerning the regularity, let us recall that 
in the context of convex analysis, Oberman and Silvestre in \cite{OS} prove the $C^{1,\alpha}$ regularity of solutions
of
$$
\Pmo(D^2u)=0 \ \mbox{in } \Omega,\ 
u(x)=g(x) \ \mbox{on } \partial\Omega,$$
under some regularity condition on $g$.
The solution of this problem is 
the convex envelope, of given boundary data $g$.
They proved that the solutions of the Dirichlet problem with $C^{1,\gamma}$ boundary data, 
are $C^{1,\gamma}$ in the interior. When $f$ is not zero and there is a first order term the question 
of the H\"older regularity of the gradient is to our knowledge completely open.

  In the next section, beside recalling a few standard facts, we give estimates near the boundary that will be crucial along the paper. In section \ref{compactness}, using those bounds, we prove global Lipschitz regularity of solutions when $k=1$. Section \ref{demi-eigen} is divided into two subsections, in the first one we prove that the generalized principal eigenvalues bound the validity of the maximum and minimum principle; in the second subsection we describe some unusual phenomena.  Section \ref{existence} is dedicated to the existence of solutions for the Dirichlet problem and  existence of the principal eigenfunction. In the last section we prove that $C^2$ strictly convex domains are 
  ``hula hoop domains''.

\section{Barrier functions, bounds, Hopf lemma}\label{barriers}

For convenience of the reader, we begin this section by recalling the definition of viscosity solution and some facts concerning the operators $\Pmk$ and $\Ppk$. 

Let us denote by $\mathbb S^N$ the set of $N\times N$ real symmetric matrices, endowed with the standard partial order: $X\leq Y$ in $\mathbb S^N$ if $\left\langle X\xi,\xi\right\rangle\leq\left\langle Y\xi,\xi\right\rangle$ $\forall\xi\in\Rn$. The identity matrix will be denoted by $I$ and the trace of $X\in\mathbb S^N$ by ${\rm tr}(X)$. A continuous mapping $F:\Omega\times\R\times\Rn\times\mathbb S^N\mapsto\R$ is \emph{degenerate elliptic} if it is nondecreasing in the matrix argument:
for any $(x, r, \xi)\in\Omega\times\R\times\Rn$
\begin{equation}\label{degell}
F(x,r,\xi,X)\leq F(x,r,\xi,Y) \quad\text{whenever}\;X\leq Y.
\end{equation}
\begin{definition}\label{visc} $u$ is a viscosity supersolution of 
$$F(x,u,\grad u, D^2u)=0\ \mbox{in}\ \Omega$$
if it is lower semicontinuous in $\Omega$ and for any $x$ in $\Omega$, for any $C^2$ function $\varphi$
touching $u$ from below at $x$ then
$$F(x,u,\grad \varphi(x),D^2 \varphi(x))\leq 0.$$ 

Analogously, $u$ is a viscosity subsolution
if it is upper semicontinuous in $\Omega$ and for any $x$ in $\Omega$, for any $C^2$ function $\varphi$
touching $u$ from above at $x$ then
$$F(x,u,\grad \varphi(x),D^2 \varphi(x))\geq 0.$$ 

A continuous function $u$ is a viscosity solution if it is both a subsolution and a supersolution.
\end{definition}

If $X\leq Y$ in $\mathbb S^N$, the Courant's min-max representation formula for eigenvalues implies that $\lambda_i(X)\leq\lambda_i(Y)$, for $i=1,\ldots,N$. In particular the operators $\Pmk$ and $\Ppk$ satisfy \eqref{degell}. Moreover the representation formula
\begin{equation*}
\Pmk(X)=\min\left\{\sum_{i=1}^k\left\langle X\xi_i,\xi_i\right\rangle\,|\,\text{$\xi_i\in\Rn$ and $\left\langle\xi_i,\xi_j\right\rangle=\delta_{ij}$, for $i,j=1,\ldots,k$}\right\},
\end{equation*}
see \cite[Lemma 8.1]{CLN1}, allows us to obtain easily the inequalities
\begin{equation}\label{suba}
\Pmk(Y)\leq{\mathcal P}^\pm_k(X+Y)-{\mathcal P}^\pm_k(X)\leq\Ppk(Y)
\end{equation}
and deduce the superadditivity (subadditivity) property  of $\Pmk$ ($\Ppk$).

\bigskip

We will consider  a couple of radial barrier functions in the paper and hence we recall
the following elementary Lemma that can be found e.g. in \cite{CutLe}.
\begin{lemma} \label{elementary} Let $\eta\in C^2([0,\,b])$, with $0<b$ such that $\eta^\prime(0)=0$.
Set $v(x)=\eta(|x|)$ in $\ol B_b$. Then, $v$ is $C^2(\ol B_b)$ and, for $x\neq 0$,
the eigenvalues of $D^2v(x)$  are $\eta''(|x|)$ and $\eta'(|x|)/|x|$, 
and the  (algebraic) multiplicity of $\eta'(|x|)/|x|$ is equal to $N-1$,  
if $\eta^{\prime\prime}(|x|)\not=\eta^\prime(|x|)/|x|$, and $N$ otherwise.  For $x=0$, they are all equal to $\eta^{\prime\prime}(|x|)$.
\end{lemma}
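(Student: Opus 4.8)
The plan is to reduce everything to a direct computation of the Hessian of $v(x)=\eta(|x|)$ away from the origin, and to a separate Taylor-expansion argument at the origin. First I would set $r=|x|$ and, for $x\neq 0$, compute $\partial_i v = \eta'(r)\,x_i/r$ and then
\[
\partial^2_{ij} v(x) = \eta''(r)\,\frac{x_ix_j}{r^2} + \frac{\eta'(r)}{r}\Bigl(\delta_{ij}-\frac{x_ix_j}{r^2}\Bigr).
\]
This exhibits $D^2v(x)$ as $\eta''(r)\,P + \bigl(\eta'(r)/r\bigr)(I-P)$, where $P$ is the orthogonal projection onto the line $\R x$. Since $P$ has eigenvalue $1$ (eigenvector $x$) and eigenvalue $0$ with multiplicity $N-1$ (the orthogonal complement $x^\perp$), the spectral decomposition is immediate: $D^2v(x)$ has eigenvalue $\eta''(r)$ on $\R x$ and eigenvalue $\eta'(r)/r$ on $x^\perp$. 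Hence, when $\eta''(r)\neq \eta'(r)/r$ the multiplicity of $\eta'(r)/r$ is exactly $N-1$; when they coincide, $D^2v(x)$ is a multiple of $I$ and the common eigenvalue has multiplicity $N$.

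Next I would address the regularity, in particular $C^2$ up to and including $x=0$. Away from $0$ the formula above shows $v\in C^2$ since $\eta\in C^2$ and $r\mapsto \eta'(r)/r$ extends continuously. At the origin the delicate point is the hypothesis $\eta'(0)=0$: I would Taylor-expand $\eta$ near $0$, writing $\eta(r)=\eta(0)+\tfrac12\eta''(0)r^2 + o(r^2)$ using $\eta'(0)=0$, so that $v(x)=\eta(0)+\tfrac12\eta''(0)|x|^2+o(|x|^2)$; this shows $v$ is twice differentiable at $0$ with $D^2v(0)=\eta''(0)I$. To upgrade to $C^2$ one checks that $\partial^2_{ij}v(x)\to \eta''(0)\delta_{ij}$ as $x\to 0$: in the displayed formula, $\eta''(r)\to\eta''(0)$ and $\eta'(r)/r\to\eta''(0)$ (by L'Hôpital or the mean value theorem, again using $\eta'(0)=0$), and the coefficients $x_ix_j/r^2$ stay bounded, so both the $P$ and the $I-P$ terms tend to $\eta''(0)\delta_{ij}$. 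This gives continuity of the second derivatives at $0$ and hence $v\in C^2(\ol B_b)$, with all eigenvalues of $D^2v(0)$ equal to $\eta''(0)=\eta''(|x|)\big|_{x=0}$.

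The main obstacle, such as it is, is purely the behaviour at the origin: making rigorous that $v$ is genuinely $C^2$ there (not merely twice differentiable) via the limit $\eta'(r)/r\to\eta''(0)$, which is exactly where the assumption $\eta'(0)=0$ is used; everything else is a routine eigenvector/eigenvalue bookkeeping for the rank-one-plus-scalar matrix $\eta''(r)P+(\eta'(r)/r)(I-P)$. I would organize the write-up as: (i) differentiation formula and spectral decomposition for $x\neq0$; (ii) multiplicity statement depending on whether $\eta''(|x|)=\eta'(|x|)/|x|$; (iii) the Taylor/limit argument at $x=0$ yielding $C^2$-regularity and $D^2v(0)=\eta''(0)I$.
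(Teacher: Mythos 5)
Your argument is correct and is the standard one: the paper does not prove Lemma \ref{elementary} itself but refers to \cite{CutLe}, and your decomposition $D^2v(x)=\eta''(|x|)P+\frac{\eta'(|x|)}{|x|}(I-P)$ with $P=\frac{xx^{T}}{|x|^{2}}$, together with the limit $\eta'(r)/r\to\eta''(0)$ coming from $\eta'(0)=0$, is exactly the expected computation. One small refinement for the write-up: a second-order Taylor expansion at the origin does not by itself give twice differentiability there, but since $|\nabla v(x)|=|\eta'(|x|)|\leq\|\eta''\|_{\infty}|x|$ makes $\nabla v$ continuous at $0$ (with $\nabla v(0)=0$) and you verify $\partial^{2}_{ij}v(x)\to\eta''(0)\delta_{ij}$ as $x\to 0$, the usual mean-value argument then yields $D^2v(0)=\eta''(0)I$ and $v\in C^{2}(\ol B_b)$, so the conclusion stands.
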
 

We start with a computation that leads to a remark on the Hopf lemma for the operator $\Pmk(D^2\cdot)+H(x,\grad\cdot)$.
In $B_R=B_R(0)$, the ball of radius $R$ and center the origin, 
let 
\begin{equation}\label{barrier}
w(x)=(R^2-|x|^2)^\gamma\quad \mbox{ with } \gamma>1.
\end{equation}
 By Lemma \ref{elementary} or a straightforward computation the eigenvalues of the Hessian of $w$ are
$$\lambda_i(D^2w)=-2\gamma(R^2-|x|^2)^{\gamma-1}<0\ \mbox{ for }\ i=1,\dots,N-1$$ while
\begin{eqnarray*}
\lambda_N(D^2w)&=&-2\gamma(R^2-|x|^2)^{\gamma-1}+4|x|^2\gamma(\gamma-1)(R^2-|x|^2)^{\gamma-2}\\
&=&
2\gamma(R^2-|x|^2)^{\gamma-2}((2(\gamma-1)+1)|x|^2-R^2).
\end{eqnarray*}

In this way, from \eqref{SC1}  
\begin{equation*} 
\begin{split}
\Pmk(D^2w)+H(x,\grad w)&\leq \Pmk(D^2w)+b\left|\grad w\right|\\
&=2\gamma(R^2-|x|^2)^{\gamma-1}\left(b|x|-k\right)\leq0\qquad\text{if \,$bR\leq k$,}
\end{split}
\end{equation*}
so that $w$ is a positive supersolution, for $k<N$,  of $\Pmk(D^2w)+H(x,\grad w)=0$ in $B_R$, which is zero on the boundary and such that the outer normal derivative $\partial_\nu w(x)=0$ for $x$ on $\partial B_R$. 
This proves the following remark.

\begin{remark}\label{remHopf} For any $k<N$, the Hopf lemma does not hold in general for supersolutions of 
$\Pmk(D^2\cdot)+H(x,\grad\cdot)$, i.e. there exists  a positive supersolution  in $B_R$ which is zero 
together with its gradient at the boundary.\end{remark}

 Moreover the extension
\begin{equation*}
\bar w(x)=\begin{cases}
w(x) & \text{if $|x|<R$}\\
0 & \text{otherwise}
\end{cases}
\end{equation*}
yields, for $\gamma>2$, a counterexample of $C^2$-function invalidating the strong minimum principle. 

In \cite{GV} the authors dealt with the removable singularities issue for second order elliptic operators whose principal 
part is a weighted version of ${\mathcal P}^\pm_k$.  By means of an explicit counterexample they deduced the 
sharpness of the condition $bR\leq k< N$ for the validity of the weak maximum/minimum principle in the cases 
$H(x,\grad u)=\pm b|\grad u|$. For the reader's convenience we report the proof in the case of the minimum principle.
Assume $bR\leq k$ and by contradiction let $v$ be a lower semicontinuous function such that
\begin{equation*}
\begin{cases}
\Pmk(D^2v)+H(x,\nabla v)\leq0 & \text{in $\Omega\subset B_R$}\\
\qquad\displaystyle\liminf_{x\to\partial\Omega}v(x)\geq0
\end{cases}
\end{equation*}
and $v(x_0)<0$ for some $x_0\in\Omega$.\\
Set $\varphi(x)=\varepsilon|x|^2$ and $0<\varepsilon<-\frac{v(x_0)}{R^2}$. Since
$$
\liminf_{x\to\partial\Omega}(v-\varphi)(x)\geq-\varepsilon R^2>v(x_0)\geq (v-\varphi)(x_0)
$$
then
$$
\inf_{x\in\Omega}(v-\varphi)(x)=(v-\varphi)(x_\varepsilon),\quad x_\varepsilon\in\Omega.
$$
Using $\varphi$ as test function we get
\begin{equation*}
\begin{split}
0&\geq \Pmk (D^2\varphi(x_\varepsilon))+H(x_\varepsilon,\nabla \varphi(x_\varepsilon))\\
&\geq 2\varepsilon k -2\varepsilon b|x_\varepsilon|\\
&>2\varepsilon(k-bR)
\end{split}
\end{equation*}
a contradiction. For the sharpness of the condition see Example \ref{mu=0}.

 Summarizing we can assert that for $H$ fulfilling (\ref{SC1})
\begin{proposition}
$\Pmk(D^2\cdot)+H(x,\grad\cdot)$
does not satisfy the strong minimum principle in any bounded domain $\Omega$. \\
On the other hand the weak minimum principle holds true in $\Omega\subseteq B_R$ if $bR\leq k$ 
and the condition $bR\leq k$ is sharp in the case $H(x,\grad u)=-b|\grad u|$.
\end{proposition}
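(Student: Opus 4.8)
The plan is to assemble the three assertions, each of whose ingredients is already available above. \emph{Failure of the strong minimum principle in an arbitrary bounded domain.} Fix any $x_0\in\Omega$ and choose $\rho>0$ so small that $\overline{B_\rho(x_0)}\subset\Omega$ and $b\rho\le k$; this is possible since $b$ is the fixed constant of \eqref{SC1}. Repeating the computation behind the barrier \eqref{barrier} with $R$ replaced by $\rho$ and the centre moved to $x_0$, the function $w_\rho(x)=(\rho^2-|x-x_0|^2)^\gamma$ with $\gamma>2$ satisfies $\Pmk(D^2w_\rho)+H(x,\grad w_\rho)\le 0$ in $B_\rho(x_0)$, and because $\gamma>2$ its extension $\bar w_\rho$ by $0$ outside $B_\rho(x_0)$ is of class $C^2(\Omega)$ with vanishing gradient and Hessian on $\partial B_\rho(x_0)$, where moreover $\Pmk(0)+H(x,0)=0$. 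Thus $\bar w_\rho$ is a classical, hence viscosity, supersolution of $\Pmk(D^2\cdot)+H(x,\grad\cdot)=0$ in $\Omega$ with $\bar w_\rho\ge 0$ in $\Omega$, $\bar w_\rho>0$ in $B_\rho(x_0)$ and $\bar w_\rho\equiv 0$ on $\Omega\setminus B_\rho(x_0)$, which directly contradicts the strong minimum principle. This is the counterexample $\bar w$ recorded after Remark~\ref{remHopf}, localised to a small ball so as to be available in every bounded domain.

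For the weak minimum principle on $\Omega\subseteq B_R$ when $bR\le k$, I would use verbatim the argument reproduced above from \cite{GV}: given a lower semicontinuous $v$ with $\Pmk(D^2v)+H(x,\grad v)\le 0$ in $\Omega$, $\liminf_{x\to\partial\Omega}v\ge 0$ and $v(x_0)<0$, perturb by $\varphi(x)=\varepsilon|x|^2$ with $0<\varepsilon<-v(x_0)/R^2$; then $v-\varphi$ attains its infimum over $\Omega$ at an interior point $x_\varepsilon$, and testing the supersolution inequality with $\varphi$ gives $0\ge\Pmk(2\varepsilon I)+H(x_\varepsilon,2\varepsilon x_\varepsilon)\ge 2\varepsilon k-2\varepsilon b|x_\varepsilon|>2\varepsilon(k-bR)\ge 0$, a contradiction.

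Finally, for the sharpness of $bR\le k$ in the case $H(x,\grad u)=-b|\grad u|$, I would invoke the explicit counterexample of Example~\ref{mu=0}, which for $bR>k$ produces, in some $\Omega\subseteq B_R$, a subsolution of $\Pmk(D^2\cdot)-b|\grad\cdot|=0$ with nonnegative boundary trace but a negative interior value. There is no genuine obstacle here, since all the analysis is already done; the only points requiring a little care are verifying that the zero extension $\bar w_\rho$ is truly $C^2$ across the sphere $\partial B_\rho(x_0)$ — which is precisely why $\gamma>2$, and not just $\gamma>1$, is needed — and noting that the smallness of $\rho$ makes $b\rho\le k$ automatic, so the strong-minimum-principle counterexample works in an arbitrary bounded domain.
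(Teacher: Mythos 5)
Your proposal is correct and takes essentially the same route as the paper: the strong minimum principle fails via the $C^2$ extension by zero of $(\rho^2-|x-x_0|^2)^\gamma$ with $\gamma>2$ on a small ball (smallness giving $b\rho\le k$ automatically, which makes the counterexample available in any bounded domain), the weak minimum principle follows from the $\varepsilon|x|^2$ perturbation argument of \cite{GV} reproduced in the paper, and sharpness is delegated to Example~\ref{mu=0}, exactly as the paper does. One small terminological slip: the annulus function $v(x)=\sin|x|+\cos\varepsilon$ of Example~\ref{mu=0} is a \emph{supersolution} of $\Pmk(D^2\cdot)-b|\grad\cdot|=0$ (that is what is needed to violate the minimum principle), not a subsolution as you wrote.
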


For later purposes we need to compare the distance function  to the boundary of  $\Omega$ i.e. $\displaystyle d(x)=\inf_{y\in\partial\Omega}|y-x|$ with subsolutions of (\ref{basic_equation}). This is the content of the next propositions.  

\medskip
\begin{proposition}[{\bf Hopf for subsolutions}]\label{Hopf}
Let $\Omega$ be a bounded $C^2$-domain and let $u$ satisfy 
\begin{equation*}
\left\{\begin{array}{lc}
\Pmk(D^2u)+H(x,\grad u)\geq 0
 &\mbox{in}\ \Omega\\
u< 0 &\mbox{in}\ \Omega.
\end{array}
\right.
\end{equation*}
Then there exists a positive constant $C=C(\Omega,u,k,b)$ such that
$$u(x)\leq -C d(x).$$
\end{proposition}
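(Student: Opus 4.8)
The plan is to combine an immediate bound far from $\partial\Omega$ with a boundary barrier. Fix a small $r_0>0$ to be chosen later; the set $K:=\{y\in\Omega:d(y)\ge r_0/2\}$ is a compact subset of $\Omega$, so, $u$ being upper semicontinuous and strictly negative, $m:=-\max_K u>0$, and on $K$ one trivially has $u\le -m\le -\frac{m}{\mathrm{diam}(\Omega)}\,d$. It therefore suffices to prove $u(x)\le -c_0\,d(x)$ for all $x$ with $d(x)<r_0/2$, for some $c_0>0$, and then take $C=\min\{c_0,\,m/\mathrm{diam}(\Omega)\}$.

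Since $\Omega$ is a bounded $C^2$ domain it satisfies a uniform interior ball condition: there is $r_0>0$ such that for every $x$ with $d(x)<r_0$, denoting by $\hat x\in\partial\Omega$ the nearest point, by $\nu$ the interior unit normal there and setting $z=\hat x+r_0\nu$, one has $B_{r_0}(z)\subset\Omega$, $|x-z|=r_0-d(x)$, and $d\ge r_0/2$ on $\partial B_{r_0/2}(z)$. In the open annulus $\Omega_z:=\{r_0/2<|y-z|<r_0\}\subset\Omega$ (which avoids the centre $z$) I would use the radial barrier $\psi(y)=g(|y-z|)$, where $g$ is taken \emph{increasing and strictly concave} on $[r_0/2,r_0]$ with $g(r_0)=0$ and $g(r_0/2)=-m$, for instance
\[
g(r)=-m\,\frac{e^{-\beta r}-e^{-\beta r_0}}{e^{-\beta r_0/2}-e^{-\beta r_0}},
\]
which satisfies $g'>0$ and $g''=-\beta g'<0$ on $[r_0/2,r_0]$. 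By the computation in Lemma~\ref{elementary} the eigenvalues of $D^2\psi(y)$ are $g''(r)<0$ (once) and $g'(r)/r>0$ (with multiplicity $N-1$), where $r=|y-z|$; hence $\Pmk(D^2\psi)=g''(r)+(k-1)g'(r)/r$, and with \eqref{SC1} and $|\grad\psi|=g'(r)$,
\[
\Pmk(D^2\psi)+H(\cdot,\grad\psi)\le g'(r)\Big(-\beta+\tfrac{k-1}{r}+b\Big)\le-\delta<0\quad\text{in }\Omega_z
\]
for some $\delta>0$, as soon as $\beta>\tfrac{2(k-1)}{r_0}+b$ (here $g'$ is bounded below by a positive constant on $[r_0/2,r_0]$). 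Thus $\psi$ is a classical strict supersolution. The point of taking $g$ increasing and concave — rather than the radially concave shape of \eqref{barrier} — is that now only one of the $k$ relevant eigenvalues is negative, which is why $\Pmk(D^2\psi)+H$ can be made negative \emph{without any smallness condition relating $b$ and $r_0$}.

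Next I would compare $u$ and $\psi$ on $\Omega_z$. On $\partial B_{r_0/2}(z)\subset K$ one has $u\le -m=\psi$; on $\partial B_{r_0}(z)$ one has $\psi=0$, while $u<0$ at the points lying in $\Omega$ and, since $u\le 0$ in $\Omega$, $\limsup_{\Omega\ni y\to\gamma}u(y)\le 0=\psi(\gamma)$ at the points $\gamma\in\partial B_{r_0}(z)\cap\partial\Omega$. Hence, if $\sup_{\Omega_z}(u-\psi)>0$ it would be attained at an interior point $y_0\in\Omega_z$; then $\varphi:=\psi+(u-\psi)(y_0)$ is a $C^2$ function touching $u$ from above at $y_0$, and the subsolution property gives $\Pmk(D^2\psi(y_0))+H(y_0,\grad\psi(y_0))\ge 0$, contradicting the strict inequality above. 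Therefore $u\le\psi$ in $\Omega_z$. Finally, $g\in C^1$ with $g(r_0)=0$ and $g'\ge c_0:=\min_{[r_0/2,r_0]}g'>0$, so $g(r_0-t)\le -c_0 t$ for $t\in[0,r_0/2]$; applied at $y=x$ with $t=d(x)$ this yields $u(x)\le\psi(x)=g(r_0-d(x))\le -c_0\,d(x)$, which together with the bound on $K$ completes the proof.

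I expect the main obstacle to be identifying the right barrier: the naive choice modelled on \eqref{barrier} produces the constant $bR-k$ and hence needs $b$ small, whereas here the hypotheses include no such restriction; taking $\psi$ radially increasing and concave (so $\Pmk$ sees a single concave direction, controllable by enlarging $\beta$) is what makes the argument go through for every $b$. The only other mildly delicate point is the behaviour of $u$ at the tangency $\hat x$ of the interior sphere, for which the hypothesis $u<0$ in $\Omega$ (giving $u\le 0$) is exactly what is needed; note that, the barrier being classical, the comparison step requires no regularity of $H$ beyond the continuity already assumed.
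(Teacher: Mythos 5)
Your proof is correct and follows essentially the same strategy as the paper: an exponential Hopf-type barrier $g(|y-z|)$ with $g''=-\beta g'$ in the annulus furnished by the uniform interior ball condition of the $C^2$ domain, together with a direct comparison against the viscosity subsolution $u$, and a trivial bound away from the boundary using the upper semicontinuity of $u<0$. The paper's barrier $v(x)=\beta(e^{-2\alpha\delta}-e^{-\alpha|x-\overline{y_0}|})$, the annulus $B_{2\delta}\setminus B_{\delta}$, and the condition $\alpha>\frac{k-1}{\delta}+b$ are identical to yours up to the renaming $r_0=2\delta$, $\beta\leftrightarrow\alpha$.
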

\begin{proof}
The proof is quite standard. We report it for the sake of completeness.
The conditions on $\Omega$ imply the existence of a positive constant $\delta$, depending on $\Omega$, such that for any $x\in\Omega_\delta=\left\{x\in\Omega\,|\,d(x)<\delta\right\}$ there are a unique $y\in\partial\Omega$ for which $d(x)=|y-x|$ and a ball $B_{2\delta}(\overline y)\subset\Omega$ such that $\overline {B_{2\delta}(\overline y)}\cap\left(\Rn\backslash\Omega\right)=\left\{y\right\}$ (see \cite[Lemma 14.16]{GT} for details).\\
Let us fix an arbitrary $x_0\in\Omega_\delta$ and consider the smooth negative radial function
$$v(x)=\beta\left(e^{-2\alpha\delta}-e^{-\alpha|x-\overline{y_0}|}\right)$$
in the annular region $A=B_{2\delta}(\overline {y_0})\backslash B_{\delta}(\overline {y_0})$.
For $\alpha>\left(\frac{k-1}{\delta}+b\right)$ and $\displaystyle\beta=\frac{\sup_{\Omega\backslash \Omega_\delta}u}{\left(e^{-2\alpha\delta}-e^{-\alpha\delta}\right)}$, a direct calculation (or Lemma \ref{elementary}) yields
\begin{equation*}
\begin{split}
\Pmk(D^2v(x))+H(x,\grad v(x))&\leq\Pmk(D^2v(x))+b\left|\grad v(x)\right|\\
&=\alpha\beta e^{-\alpha|x-\overline{y_0}|}\left(\frac{k-1}{|x-\overline{y_0}|}+b-\alpha\right)< 0\quad\text{in $A$}
\end{split}
\end{equation*}
and 
$$\limsup_{x\to\partial A}(u-v)(x)\leq0.$$
Using the comparison principle 
between a classical strict supersolution and a viscosity subsolution, we get
$$u(x_0)\leq v(x_0)=\beta\left(e^{-\alpha|y_0-\overline{y_0}|}-e^{-\alpha|x_0-\overline{y_0}|}\right)\leq-\alpha\beta e^{-2\alpha\delta}d(x_0).$$ Moreover since $\displaystyle\max_{\Omega\backslash\Omega_\delta}\frac{u(x)}{d(x)}<0$ we conclude by taking $C$ small enough.
\end{proof}

\begin{remark}\label{strong_MP}
\rm 
Standard procedures allow us to deduce from the above computation that the strong maximum principle holds for $\Pmk(D^2\cdot)+H(x,\grad\cdot) + \mu \cdot$ for any $\mu\in\R$.
\end{remark}

In Proposition~\ref{upbound}, we shall prove that for any $\gamma\in(0,1)$ and any subsolution 
$u$ of $\Pmo(D^2u)+H(x,\grad u)=f(x)$ in $\Omega$,
the ratio $\frac{u(x)}{d(x)^\gamma}$ is bounded from above by a constant $C$,  without requiring further assumptions on $\Omega$. The constant $C$ depends in particular on $\gamma$ and blows up for $\gamma\to1$. In order to obtain a similar bound with $\gamma=1$ and in the general case of subsolutions of the equation \eqref{basic_equation}, we restrict to convex domains $\Omega$ satisfying the following assumption: there exist $R>0$ and $Y\subset\R^N$, depending on $\Omega$, such that
\begin{equation}\label{strict_convexity}
\Omega=\bigcap_{y\in Y}B_R(y).
\end{equation}
For any $R>0$ we define the class $\mathcal C_R$ of such domains, i.e. $${\mathcal C}_R:=\left\{\text{$\Omega\subset\R^N$ :  representation formula \eqref{strict_convexity} holds}\right\}, \ \  \text{ and set } \  \mathcal{C} 
=\bigcup_{R>0}\mathcal{C}_R.$$
The class $\cC$ includes the set of bounded domains with $C^2$-boundary which are strictly convex in the sense that all the principal curvatures of the surface $\partial\Omega$ are 
positive everywhere. Indeed, we shall give, in section \ref{convex}, the proof of the following  
\def\ul{\underline}\def\gk{\kappa}
\begin{proposition}\label{hulahoop} Let $\gO$ be a bounded domain with $C^2$-boundary. 
Let $\kappa_i(x)$ denote the principal curvatures of $\partial\Omega$ at $x$ for $i=1,\ldots,N-1$, set 
\[\ul{\gk}=\min\{\gk_i(x)\,:\, i=1,\ldots,N-1,\ x\in\pl\gO\},\] 
and assume that $\ul{\gk}>0$. If $\,R\geq 1/\ul{\gk}$, then 
$\,\gO\in\cC_R$. 
\end{proposition}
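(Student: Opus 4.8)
Fix $R\ge 1/\ul{\gk}$. Since $\pl\gO$ is $C^2$ with all principal curvatures $\ge\ul{\gk}>0$, its Gauss--Kronecker curvature is everywhere positive, so $\pl\gO$ is an ovaloid and in particular $\gO$ is a bounded convex domain; let $\nu(z)$ denote the outer unit normal of $\gO$ at $z\in\pl\gO$. The plan is to realize \erf{strict_convexity} explicitly with the family
\[
Y:=\bigl\{\,z-R\,\nu(z)\ :\ z\in\pl\gO\,\bigr\},
\]
i.e.\ to prove $\gO=\bigcap_{y\in Y}B_R(y)$, which is precisely \erf{strict_convexity} and thus gives $\gO\in\cC_R$.

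One inclusion is elementary. For $y=z-R\nu(z)\in Y$, the closed ball $\ol{B_R(y)}$ is contained in the supporting halfspace $H_z:=\{x\in\Rn:\langle x-z,\nu(z)\rangle\le 0\}$ of $\gO$ at $z$ (its centre is at distance $R$ from $\pl H_z$ and its radius is $R$, so $\ol{B_R(y)}$ meets $\pl H_z$ only at $z$). Hence $\bigcap_{y\in Y}B_R(y)\subseteq\bigcap_{z\in\pl\gO}H_z=\ol\gO$; and since each $x\in\pl\gO$ lies on $\pl B_R(x-R\nu(x))$, such an $x$ is excluded from the open set $\bigcap_{y\in Y}B_R(y)$, so this intersection is contained in $\ol\gO\setminus\pl\gO=\gO$.

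The substance is the reverse inclusion, which is where $R\ge1/\ul{\gk}$ is used and which reduces to a rolling statement: for every $z_0\in\pl\gO$, writing $\nu_0:=\nu(z_0)$ and $c:=z_0-R\nu_0$, one has $\gO\subseteq B_R(c)$. This is a form of Blaschke's rolling theorem and could be quoted as such, but I would prove it via the support function $h:\mathbb S^{N-1}\to\R$ of $\ol\gO$, which is $C^2$ because $\pl\gO$ is $C^2$ with positive Gauss curvature. The eigenvalues of $\nabla^2_{\mathbb S^{N-1}}h+h\,I$ are the principal radii of curvature of $\pl\gO$, so the hypothesis reads $\nabla^2_{\mathbb S^{N-1}}h+h\,I\le R\,I$ on $\mathbb S^{N-1}$. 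Put $G(\nu):=h(\nu)-\langle c,\nu\rangle-R$. Using that the restriction to $\mathbb S^{N-1}$ of a linear function $\langle a,\cdot\rangle$ satisfies $\nabla^2_{\mathbb S^{N-1}}\langle a,\cdot\rangle+\langle a,\cdot\rangle\,I=0$, while the constant $R$ gives $\nabla^2_{\mathbb S^{N-1}}R+R\,I=R\,I$, one gets $\nabla^2_{\mathbb S^{N-1}}G+G\,I\le 0$; and a short computation (using $\langle z_0,\nu_0\rangle=h(\nu_0)$ and $\nabla_{\mathbb S^{N-1}}h(\nu_0)=z_0-\langle z_0,\nu_0\rangle\nu_0$) gives $G(\nu_0)=0$ and $\nabla_{\mathbb S^{N-1}}G(\nu_0)=0$. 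Since $\gO\subseteq\ol{B_R(c)}$ is equivalent to $G\le0$ on $\mathbb S^{N-1}$, it remains to turn the vanishing of $G$ at $\nu_0$ into a sign. Restrict $G$ to a great circle through $\nu_0$, $\gamma(\phi)=\cos\phi\,\nu_0+\sin\phi\,e$ with $e\perp\nu_0$, $|e|=1$, and set $g(\phi):=G(\gamma(\phi))$; along the unit-speed geodesic $\gamma$ the matrix inequality gives $g''+g\le 0$ (because $g''$ equals the Hessian of $G$ evaluated on $\dot\gamma$), together with $g(0)=g'(0)=0$. Then $\frac{d}{d\phi}\bigl(g'(\phi)\sin\phi-g(\phi)\cos\phi\bigr)=(g''+g)\sin\phi\le 0$ on $[0,\pi]$, and as the bracket vanishes at $\phi=0$ it stays $\le 0$; hence $\phi\mapsto g(\phi)/\sin\phi$ is non-increasing on $(0,\pi)$ and tends to $0$ as $\phi\to0^+$, forcing $g\le 0$ on $[0,\pi]$. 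Every $\nu\in\mathbb S^{N-1}$ equals $\gamma(\phi)$ for some such great circle and some $\phi\in[0,\pi]$, so $G\le0$ on $\mathbb S^{N-1}$, i.e.\ $\ol\gO\subseteq\ol{B_R(c)}$; and since $\gO$ is open, no point of $\gO$ lies on $\pl B_R(c)$, so $\gO\subseteq B_R(c)$. Together with the first inclusion this yields $\gO=\bigcap_{y\in Y}B_R(y)$, as wanted.

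The main obstacle is this rolling step. If one prefers to avoid the support-function calculus, one can instead reduce it to the planar case by cutting $\ol\gO$ with $2$-planes containing $c$ and $z_0$: Meusnier's theorem shows the plane sections of $\pl\gO$ still have curvature $\ge1/R$, and the same comparison $g''+g\le 0$ then finishes the argument in $\R^2$.
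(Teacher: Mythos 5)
Your proof is correct, but it takes a genuinely different route to the hard inclusion $\gO\subset B_R(z-R\nu(z))$ (Blaschke's rolling theorem) than the paper does. The easy inclusion, via supporting halfspaces and $B_R(z-R\nu(z))\subset\{x:\langle x-z,\nu(z)\rangle<0\}$, is the same in both. For the rolling step, the paper argues by contradiction in a $2$-dimensional slice: it fixes a point $q\in\gO\setminus B_M(p-M\nu(p))$ with $M>R$, cuts $\gO$ with the plane $H$ through $p$, $q$, $p-M\nu(p)$, invokes its Lemma~\ref{HH1} to guarantee the planar cross-section $\pl_H(\gO\cap H)$ has curvature $\geq\ul\gk>1/M$ at every point, and then compares the graph of this section with the circle of radius $M$ at an interior maximum of their difference to get a pointwise curvature estimate contradicting Lemma~\ref{HH1}. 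You instead work globally on the sphere of normals via the support function $h$, encoding the curvature hypothesis as $\nabla^2_{\mathbb S^{N-1}}h+hI\leq R\,I$, and reduce to the one-dimensional Sturm comparison $g''+g\leq 0$, $g(0)=g'(0)=0\Rightarrow g\leq 0$ on $[0,\pi]$ along great circles. Your approach is shorter and more conceptual, but it presupposes the reader knows (a) that $C^2$ plus positive Gauss curvature makes $h$ a $C^2$ function on the sphere, and (b) that the eigenvalues of $\nabla^2_{\mathbb S^{N-1}}h+hI$ are the principal radii of curvature; the paper's argument is longer but elementary and self-contained (implicit function theorem and calculus only), which is presumably why it builds the auxiliary Lemma~\ref{HH1} rather than quoting convex-geometry facts. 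Your closing remark about reducing to planar sections via Meusnier is in fact very close in spirit to what the paper does with Lemma~\ref{HH1}.
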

By means of \eqref{strict_convexity} we show that the distance function $d(x)$ is an upper barrier for any 
subsolutions of \eqref{basic_equation}.

\begin{proposition}\label{upbound}
Let $m$ be a positive constant and let $u$ satisfy 
$$\left\{\begin{array}{lc}
\Pmo(D^2u)+H(x,\grad u)\geq -m
 &\mbox{in}\ \Omega\\
u\leq 0 &\mbox{on}\ \partial\Omega.
\end{array}
\right.
$$
Then for any $\gamma\in(0,1)$ there exists $C=C(\gamma, b,  m, 	\left\|u^+\right\|_\infty)$ such that 
 $$u(x)\leq C d(x)^\gamma.$$
Let $R>0$, $\Omega\in{\mathcal C}_R$ and $u$ be a solution of
$$\left\{\begin{array}{lc}
\Ppk(D^2u)+H(x,\grad u)\geq -m
 &\mbox{in}\ \Omega\\
u\leq 0 &\mbox{on}\ \partial\Omega.
\end{array}
\right.
$$
If $H$ satisfies \eqref{SC1} and  $bR<k$ then there exists $C=C(\Omega, b, k, m)$ such that 
\begin{equation}\label{barrier1}
u(x)\leq C d(x).
\end{equation}
\end{proposition}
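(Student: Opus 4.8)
The plan is to build an explicit radial barrier adapted to each ball $B_R(y)$ in the representation \eqref{strict_convexity}, then take an infimum over $y\in Y$ to get a barrier on all of $\Omega$, and finally invoke the comparison principle between a classical strict supersolution and the viscosity subsolution $u$. For the first assertion ($k=1$, $\gamma\in(0,1)$, no convexity), I would proceed differently and more locally: fix a boundary point $y\in\partial\Omega$, and use the function $\phi(x)=C\,|x-\bar y|^{?}$ — more precisely a power of the distance to an exterior point — no, since $\Omega$ need not be smooth here; instead I would use that for $k=1$ the operator $\Pmo$ only sees the \emph{smallest} eigenvalue, so a function of the form $\phi(x)=C\bigl(A-|x-z|^2\bigr)^{?}$ is too crude. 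The clean choice for $k=1$ is $\phi(x)=C\,\psi(d_H(x))$ where $d_H$ is the distance to a supporting hyperplane: since $\Omega$ is bounded we may enclose it in a half-space $\{x:\langle x-x_0,e\rangle<L\}$ touching $\partial\Omega$; on this half-space the function $g(t)=L^\gamma-(L-t)^\gamma$ of the linear coordinate $t=\langle x-x_0,e\rangle$ has Hessian with a single nonzero eigenvalue, equal to $g''(t)=\gamma(1-\gamma)(L-t)^{\gamma-2}>0$, hence $\Pmo(D^2\phi)=\min$ over this together with the zero eigenvalues $=0$ in the relevant regime — one then needs the gradient term $-b|\nabla\phi|$ and the right-hand side $-m$ to be absorbed. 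Actually $\Pmo(D^2(C g))=C\cdot 0 = 0$ here is too weak; so I instead take $\phi(x)=C\bigl(L-\langle x-x_0,e\rangle\bigr)^\gamma$ with the \emph{negative} power behaviour, i.e. choose the concave profile $t\mapsto -(L-t)^\gamma$ so that $g''<0$, giving $\Pmo(D^2\phi)=C g''(t)=-C\gamma(1-\gamma)(L-t)^{\gamma-2}$, and the key point is that $(L-t)^{\gamma-2}\to\infty$ as $t\to L$, i.e. near $\partial\Omega$, which is exactly where we need a large negative contribution to beat $-m$. Combined with $|\nabla\phi|=C\gamma(L-t)^{\gamma-1}$, for $x$ near $\partial\Omega$ the quantity $\Pmo(D^2\phi)+b|\nabla\phi| = -C\gamma(L-t)^{\gamma-2}\bigl[(1-\gamma)-b(L-t)\bigr]\le -m$ once $C$ is large and $d(x)$ small; away from $\partial\Omega$ one enlarges $C$ using $\|u^+\|_\infty$. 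Since $L-\langle x-x_0,e\rangle\ge d(x)$, the comparison $u\le\phi\le C d(x)^\gamma$ follows on a neighborhood of the boundary, and then globally by adjusting $C$.

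For the second assertion, the relevant operator is $\Ppk$, and here the representation $\Omega=\bigcap_{y\in Y}B_R(y)$ is used essentially. Fix $y\in Y$, write $r=|x-y|$, and consider the radial function $v_y(x)=\psi(R-r)$ on $B_R(y)$ with a profile $\psi$ vanishing at $0$, increasing and concave, $\psi(0)=0$, $\psi'(0^+)$ finite. By Lemma \ref{elementary} the eigenvalues of $D^2 v_y$ are $\psi''(R-r)$ (once) and $-\psi'(R-r)/r$ ($N-1$ times). Since $\psi$ is concave and increasing, $\psi''\le 0$ and $-\psi'/r<0$, so $\Ppk(D^2 v_y)$ is the sum of the $k$ largest eigenvalues; for $v_y$ to be a \emph{super}solution of $\Ppk(D^2\cdot)+H(x,\nabla\cdot)\ge -m$ read the wrong way — actually we want $v_y$ to dominate $u$, so we need it to be a \emph{supersolution} of the equation satisfied by $u$ in the comparison sense, i.e. $\Ppk(D^2 v_y)+b|\nabla v_y|\le -m$. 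Computing, $|\nabla v_y|=\psi'(R-r)$ and, since all nonzero-weight eigenvalues except possibly one are $-\psi'/r$ and these are the $k$ we sum when $\psi''$ is not among the top $k$, we get (for $r$ bounded below, i.e. away from the center, which is fine since $y\notin\Omega$ typically — more carefully one notes $\Omega$ stays away from each center or handles $r$ small separately) $\Ppk(D^2 v_y)+b|\nabla v_y|\le -(k)\psi'(R-r)/r + b\psi'(R-r)$, which is $\le \psi'(R-r)\bigl(b - k/R\bigr)<0$ using $bR<k$ — but we need it $\le -m$, a strictly negative constant, whereas $\psi'(R-r)\to 0$ if $\psi$ is, say, linear near $0$; so choose $\psi$ with $\psi'(0^+)>0$, e.g. $\psi(s)=C s$ near $s=0$, giving the bound $\le C(b-k/R)<0$, and enlarge $C$ (hence worsen the constant) until $C(b-k/R)\le -m$. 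Then $v_y$ is a classical strict supersolution on the annular region $\{R-\delta<|x-y|<R\}\cap\Omega$ (with $\delta$ fixed so the center is excluded), and on the inner boundary $v_y\ge c_\delta\ge\|u^+\|_\infty$ after a further increase of $C$, while on $\partial\Omega\cap\{|x-y|=R\}$ we have $v_y=0\ge u$. Comparison gives $u\le v_y$ on that region for each $y$; taking the infimum over $y\in Y$ and using that $v_y(x)\le C(R-|x-y|)$ while $\inf_{y\in Y}(R-|x-y|)$ is comparable to $d(x)$ for $x$ near $\partial\Omega$ (this is precisely where \eqref{strict_convexity} enters: $d(x)=\dist(x,\partial\Omega)$ and each supporting ball realizes the distance up to a controlled factor), we conclude $u(x)\le C\,d(x)$ near $\partial\Omega$, and then everywhere by enlarging $C$ with $\|u^+\|_\infty$.

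The main obstacle I anticipate is the geometric step identifying $\inf_{y\in Y}(R-|x-y|)$ with (a constant times) $d(x)$: one inequality is immediate since each $B_R(y)\supset\Omega$ gives $R-|x-y|\ge d(x)$; the reverse — that some supporting ball is tangent near the nearest boundary point, so $R-|x-y|\le C d(x)$ — requires a short argument using that $\Omega\in\cC_R$, namely that through (or near) any boundary point there passes a ball $B_R(y)$ of the family with $x$ close to the diametrically opposite point, and an elementary estimate $R-|x-y| \le d(x) + O(d(x)^2/R)$ from the tangency of a sphere of radius $R$ to its tangent plane. A secondary technical point is the behaviour of the radial barrier $v_y$ near the center $y$ of the ball, which must be handled either by noting $\dist(y,\Omega)$ is bounded below (true for $\Omega\in\cC_R$ unless $\Omega$ is almost all of $B_R$, and even then one shrinks $R$ or truncates), or by choosing $\psi$ so that $v_y$ extended by its value at $r=R-\delta$ is still a supersolution in the classical-plus-comparison sense via the inner boundary condition — the second route avoids the issue entirely and is what I would write. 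The rest (choosing constants, invoking the comparison principle quoted in the proof of Proposition \ref{Hopf}, passing from a boundary neighborhood to all of $\Omega$) is routine.
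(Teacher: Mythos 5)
Both halves of your plan have the right general shape (build an explicit barrier, then compare), but each half as written contains a genuine error.

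\textbf{First half.} The final inequality runs backwards. After comparing $u$ with $\phi(x)=C\bigl(L-\langle x-x_0,e\rangle\bigr)^\gamma$ you would obtain $u\le\phi$; but $L-\langle x-x_0,e\rangle\ge d(x)$ gives $\phi(x)\ge C\,d(x)^\gamma$, \emph{not} $\phi(x)\le C\,d(x)^\gamma$, so $u\le\phi$ does not yield the claim. To conclude you need a barrier that \emph{equals} a multiple of $d(x_0)^\gamma$ at the particular point $x_0$ you are bounding, hence a barrier depending on $x_0$; and since this half of the statement makes no convexity assumption on $\Omega$, you cannot in general place a supporting hyperplane through the nearest boundary point of $x_0$, which is what a hyperplane-adapted version of your argument would require. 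The paper's fix is to take, for each $x_0\in\Omega_\delta$ with nearest boundary point $y_0$, the nonnegative radial cone $v(x)=C|x-y_0|^\gamma$: by Lemma~\ref{elementary} its smallest Hessian eigenvalue is $C\gamma(\gamma-1)|x-y_0|^{\gamma-2}<0$ and blows up as $x\to y_0$, so $\Pmo(D^2v)+b|\nabla v|<-m$ on $B_\delta(y_0)\setminus\{y_0\}$ for $\delta$ small; meanwhile $v\ge 0\ge u$ on $\partial\Omega$ and $v=C\delta^\gamma=\|u^+\|_\infty\ge u$ on $\partial B_\delta(y_0)$, and comparison on $B_\delta(y_0)\cap\Omega$ yields $u(x_0)\le v(x_0)=C\,d(x_0)^\gamma$ exactly.

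\textbf{Second half.} The eigenvalue count for $v_y(x)=\psi(R-|x-y|)$ with $\psi$ linear near $0$ is wrong. If $\psi(s)=Cs$, then $\psi''=0$ and the Hessian eigenvalues are $0$ (simple) and $-C/r$ (multiplicity $N-1$); the $k$ largest therefore include the $0$, so $\Ppk(D^2v_y)=-(k-1)C/r$, not $-kC/r$. Hence $\Ppk(D^2v_y)+b|\nabla v_y|\le C\bigl(b-(k-1)/r\bigr)$, which would require the stronger hypothesis $bR<k-1$, and for $k=1$ gives $\Ppo(D^2v_y)=0$ with no negative principal part at all --- the linear profile fails outright. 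The paper uses instead the paraboloid $v_y(x)=M(R^2-|x-y|^2)$ with $M=m/(k-bR)$: then $D^2v_y=-2MI$, all eigenvalues coincide, $\Ppk(D^2v_y)=-2Mk$, and $\Ppk(D^2v_y)+b|\nabla v_y|\le-2M(k-bR)=-2m<-m$, which works precisely under $bR<k$ and for every $k$. This choice also makes $v_y$ a classical strict supersolution on the whole ball $B_R(y)$, so your concern about the center evaporates, and the geometric step you flagged is the two-line observation you sketched: choose $y\in Y$ with $z\notin B_R(y)$ for $z$ a nearest boundary point of $x$, so $R-|x-y|\le |x-z|=d(x)$ and $R^2-|x-y|^2\le 2R\,d(x)$.
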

\begin{proof} 
Let $\Omega_\delta=\left\{x\in\Omega\,|\,d(x)<\delta\right\}$ with
\begin{equation}\label{delta}
\delta=\min\left(\frac{1-\gamma}{2b},\left(\frac{\gamma(1-\gamma)}{4m}\left\|u^+\right\|_\infty\right)^{\frac{1}{2}}\right) 
\end{equation}
and without loss of generality we may assume $u^+\not\equiv 0$. For $x_0\in\Omega_\delta$, take $y_0\in\partial\Omega$ such that $d(x_0)=|x_0-y_0|$ and consider the function 
$v(x)=C|x-y_0|^\gamma$, where $C=\frac{\left\|u^+\right\|_\infty}{\delta^\gamma}$. Then $v(x)$ satisfies in $B_\delta(y_0)\cap\Omega$ 
\begin{equation*}
\begin{split}
\Pmo(D^2v(x))+H(x,\grad v(x))&\leq C\gamma|x-y_0|^{\gamma-2}\left(\gamma-1+b|x-y_0|\right)\\
&\leq -C\gamma\frac{1-\gamma}{2}\delta^{\gamma-2}<-m.
\end{split}
\end{equation*}
Moreover
$$
u(x)\leq v(x)\quad\text{for any $x\in\partial(B_\delta(y_0)\cap\Omega)$}
$$
and by comparison $u(x_0)\leq v(x_0)=Cd(x_0)^\gamma$. Since $x_0$ is arbitrary we obtain the desired inequality $u(x)\leq Cd(x)^\gamma$ in $\Omega_\delta$ and the same conclusion is still true in $\Omega\backslash\Omega_\delta$ by the choice of the constant $C$.

\medskip

For the second inequality, fix any $y\in Y$ and consider the function $v_y(x)=M(R^2-|x-y|^2)$, where $M=\frac{m}{(k-bR)}$. Note that $v_y(x)\geq 0$ for all $x\in\overline B_R(y)$ and hence $v_y(x)\geq0$ in $\overline\Omega$. Then
\begin{equation*}
\begin{split}
\Ppk(D^2v_y(x))+H(x,\grad v_y(x))&\leq 2M(-k+b|x-y|)\\
&\leq-2M(k-bR)<-m\quad\text{in $B_R(y)$}
\end{split}
\end{equation*}
and by comparison 
\begin{equation}\label{comparison}
u(x)\leq v_y(x)\quad\text{in $\overline\Omega$}.
\end{equation}
This shows that
$$
u(x)\leq Cd(x)\quad\text{for all $x\in\overline\Omega$}
$$
with $C=2MR$. To see this, let $x\in\Omega$ and select $z\in\partial\Omega$ so that $d(x)=|x-z|$. Then select $y\in Y$ so that $z\notin B_R(y)$. Since $x\in B_R(y)$, we have
\begin{equation*}
\begin{split}
R^2-|x-y|^2&=(R-|x-y|)(R+|x-y|)\leq 2R(R-|x-y|)\\
&=2Rd(x,\partial B_R(y))= 2R|x-z|=2Rd(x)
\end{split}
\end{equation*}
and we conclude by \eqref{comparison}. 
\end{proof}
 
We conclude this section by observing that the upper bound \eqref{barrier1} fails to be true if the boundary $\partial\Omega$ is flat, at least if $\Omega$ is unbounded. Indeed in the case of the halfspace $$\Omega=\left\{x=(x_1,\ldots,x_N)\in\Rn\,:\,x_1>0\right\},$$
the function $u(x)=x_1^\gamma$ is a solution in $\Omega$ of $\Ppk(D^2u)=0$ for any $\gamma\in(0,1)$ and $k<N$, but on the other hand
the ratio $\frac{u(x)}{d(x)}=\frac{1}{x_1^{1-\gamma}}$ is unbounded near $x_1=0$.

\section{Lipschitz regularity, compactness}\label{compactness}
In this section we will study the Lipschitz regularity of viscosity solutions of 
\begin{equation}\label{holder1}
\left\{\begin{array}{lc}
\Pmo(D^2u)+H(x,\grad u)= f(x)& \mbox{in}\ \Omega\\
u=0& \mbox{on}\ \partial\Omega
\end{array}
\right.
\end{equation}
and, in a dual fashion,  of
\begin{equation}\label{holder2}
\left\{\begin{array}{lc}
\Ppo(D^2u)+H(x,\grad u)= f(x)& \mbox{in}\ \Omega\\
u=0& \mbox{on}\ \partial\Omega,
\end{array}
\right.
\end{equation}
where $f$ is  continuous and bounded in $\Omega$.  

\begin{proposition}\label{Lipschitz_regularity}  
Let $\Omega\in{\mathcal C}_R$. If $H$ satisfies \eqref{SC1} and $bR<1$, then the solutions $u$ of \eqref{holder1} and \eqref{holder2} are Lipschitz continuous in $\overline\Omega$. The Lipschitz  norm of $u$ can be bounded  by a constant depending
 only on $\Omega$, $b$ and the $L^\infty$ norms of $u$ and $f$.
\end{proposition}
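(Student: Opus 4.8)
The plan is to treat \eqref{holder1} and \eqref{holder2} simultaneously: since $\Ppo(X)=-\Pmo(-X)$, the substitution $\hat u=-u$, $\hat H(x,\xi)=-H(x,-\xi)$, $\hat f=-f$ turns a solution of \eqref{holder2} into a solution of the \eqref{holder1}-type problem for $\hat u$, with $\hat H$ still satisfying \eqref{SC1} with the same $b$; so I may assume $u$ solves \eqref{holder1}. The proof then has two ingredients, tied together by an elementary one-dimensional convexity estimate along chords of $\Omega$ (recall that $\Omega$, being an intersection of balls, is convex): \textbf{(A)} a \emph{linear} boundary barrier $|u(x)|\le C_0\,d(x)$ on $\overline\Omega$, and \textbf{(B)} the semiconvexity of $u$ along every direction, with a constant controlled by $\sup_\Omega|\grad u|$.

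For \textbf{(A)}, observe that $\Pmo(X)\le\Ppo(X)$ always, so the subsolution inequality $\Pmo(D^2u)+H(x,\grad u)\ge f$ forces, in the viscosity sense, $\Ppo(D^2u)+b|\grad u|\ge -\|f\|_\infty$ in $\Omega$; since $\Omega\in\mathcal C_R$ and $bR<1$, the second statement of Proposition~\ref{upbound} (with $k=1$, applied to the Hamiltonian $\xi\mapsto b|\xi|$) gives $u(x)\le C_0\,d(x)$, $C_0=C_0(\Omega,b,\|f\|_\infty)$. Dually, because $u$ is a supersolution of \eqref{holder1}, $-u$ satisfies in the viscosity sense $\Ppo(D^2(-u))+b|\grad(-u)|\ge -\|f\|_\infty$ as well, so the same proposition yields $-u\le C_0\,d$; hence $|u|\le C_0\,d$ on $\overline\Omega$. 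This is where the geometric hypothesis really bites: for a general domain one only gets the sub-linear barrier of the first part of Proposition~\ref{upbound}, as the half-space example at the end of Section~\ref{barriers} shows.

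For \textbf{(B)} and the conclusion, write $M:=\sup_\Omega|\grad u|$, assumed finite for the moment. Because $\Pmo(D^2u)=\min_{|\xi|=1}\langle D^2u\,\xi,\xi\rangle$, for any $C^2$ function $\varphi$ touching $u$ from above at an interior point the subsolution inequality forces $\partial^2_{ee}\varphi\ge\Pmo(D^2\varphi)\ge f-H(x,\grad\varphi)\ge -\Lambda_0$ for every unit vector $e$, where $\Lambda_0:=\|f\|_\infty+bM$ (using $|\grad\varphi|\le M$ at such a point). Thus, along any segment $\{a+te:0\le t\le\ell\}\subset\overline\Omega$, the function $h(t):=u(a+te)+\tfrac{\Lambda_0}{2}t^2$ is convex. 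Now fix $x_0\in\Omega$ and a unit $e$, and take for $[a,b]$ the whole chord of $\Omega$ in the direction $e$ through $x_0=a+t_0e$, of length $\ell<2R$ (here we use $\Omega\subset B_R(y)$ for some $y$), so that $u(a)=u(b)=0$. Comparing the left and right difference quotients of the convex $h$ at $t_0$ with its values $h(0)=0$, $h(\ell)=\tfrac{\Lambda_0}{2}\ell^2$, $h(t_0)=u(x_0)+\tfrac{\Lambda_0}{2}t_0^2$, and using $|u(x_0)|\le C_0 d(x_0)\le C_0\min(t_0,\ell-t_0)$ together with $t_0,\ell-t_0<2R$, one gets $|\partial_e u(x_0)|=|h'(t_0)-\Lambda_0 t_0|\le C_0+\Lambda_0 R$. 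Since $e$ and $x_0$ are arbitrary, $M\le C_0+\Lambda_0 R=C_0+R\|f\|_\infty+bR\,M$, and as $bR<1$ this self-improves to $M\le(C_0+R\|f\|_\infty)/(1-bR)$. Hence $u$ is Lipschitz in $\Omega$ with a constant of the asserted form, and the bound extends to $\overline\Omega$ because $u=0$ on $\partial\Omega$.

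The one point that needs care is the preliminary finiteness of $M$: the last inequality is a genuine contraction only thanks to $bR<1$, but one must first know $u$ is globally Lipschitz with \emph{some} constant before running the chord estimate. I would get around this by a standard approximation — regularising $f$, adding a vanishing uniformly elliptic term $\vep\Delta$, and exhausting $\Omega$ from inside by smooth strictly convex domains still of class $\mathcal C_{R'}$ with $R'$ arbitrarily close to $R$ (cf.\ Proposition~\ref{hulahoop}) — so that the approximate problems have classical solutions to which \textbf{(A)}–\textbf{(B)} apply with constants independent of $\vep$; the resulting uniform Lipschitz bound then passes to the limit by stability of viscosity solutions. I expect this approximation step, rather than the estimate itself, to be the only real obstacle, the estimate being genuinely \lq\lq sleek\rq\rq\ once the linear barrier \eqref{barrier1} is available.
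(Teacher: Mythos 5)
Your route is genuinely different from the paper's. The paper proves the Lipschitz bound as a direct a priori estimate by comparing $u$ on each small domain $B_\delta(y_0)\cap\Omega$ with the explicit radial cusp barrier $v_{y_0}(x)=u(y_0)+L\bigl(|x-y_0|-|x-y_0|^\theta\bigr)$, $\theta\in(1,2)$, whose concave radial profile makes $\Pmo(D^2v_{y_0})+b|\grad v_{y_0}|\to-\infty$ near $y_0$ and hence a strict classical supersolution there; the linear barrier \eqref{barrier1} is used only to verify $u\le v_{y_0}$ on $\ol B_\delta(y_0)\cap\pl\Omega$. This argument needs no a priori regularity of $u$ beyond what the definition of viscosity subsolution provides, and it works for \emph{every} viscosity solution.

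Your Part (A) is correct and matches what the paper uses (the paper in fact only needs the one-sided bound $-u\le Cd$, but your two-sided bound is fine and correctly derived from Proposition~\ref{upbound}). Your Part (B) — encoding the semiconvexity in the viscosity sense, then reading off the one-sided derivatives of the convex $h(t)=u(a+te)+\tfrac{\Lambda_0}{2}t^2$ at the two ends of the chord, and closing the loop with $bR<1$ — is a nice and efficient chord argument, and it is the same heuristic the paper mentions in the introduction (\lq\lq solutions of $\Pmo(D^2u)=f$ are semiconvex\rq\rq). The arithmetic $M\le C_0+\Lambda_0R$ with $\Lambda_0=\|f\|_\infty+bM$ is right and the contraction only closes because $bR<1$.

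The genuine gap is the \emph{a priori finiteness} of $M$, and the approximation you sketch does not close it for the statement as formulated. Adding $\vep\Delta$, mollifying $f$, and exhausting $\Omega$ by smooth hula-hoop subdomains with zero boundary data produces approximate solutions $u_\vep$ that do satisfy (A)--(B) with $\vep$-independent constants; the uniform Lipschitz bound then passes to a subsequential limit $\tilde u$, and $\tilde u$ solves \eqref{holder1}. But nothing forces $\tilde u$ to coincide with the given solution $u$: the degenerate operator $\Pmo$ is not covered by a general comparison/uniqueness theorem here (the paper only obtains uniqueness under additional hypotheses, see the remarks around $\mu^-_{k,b}$ in Section~\ref{existence}), so you would have proved \emph{existence of a Lipschitz solution}, not that \emph{all} solutions are Lipschitz — which is what the proposition and its later uses (e.g.\ the eigenfunction construction, where one normalises arbitrary $u_n$) require. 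Attempts to bypass this by taking $u|_{\pl\Omega_\vep}$ as boundary data on the inner domains destroy the zero boundary condition that both (A) and the chord endpoints rely on, and the interior Lipschitz constant $M_\tau$ on $\{d>\tau\}$ from Proposition~\ref{Lip-regularity}(i) cannot be fed into the semiconvexity constant along a full chord, since that chord must reach $\pl\Omega$. The paper's cusp-barrier comparison avoids all of this by never invoking a gradient bound on $u$ at all. If you want to keep the semiconvexity picture, the cleanest repair is to replace the global $M$ by the local Lipschitz constant and run the barrier comparison of the paper in the boundary layer, which essentially collapses back into the paper's own argument.
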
 
\begin{proof}
We shall write the proof in the case $\Pmo$, since if $v$ is a solutions of \eqref{holder2}, then $u=-v$ is a solution of $\Pmo(D^2u)+\tilde H(x,\grad u)=-f(x)$ in $\Omega$, where $\tilde H(x,\xi)=-H(x,-\xi)$ satisfies in turn $\eqref{SC1}$.\\
Let $u$ be a solution of \eqref{holder1}. It is sufficient to show that for any $x$, $y\in\overline\Omega$ such that $|x-y|<\delta$, where $\delta$ is a positive constant to be determined, then
$$
u(x)-u(y)\leq L|x-y|
$$
with $L=L(\Omega, b, \left\|u\right\|_\infty, \left\|f\right\|_\infty)$.\\
Fix $\theta\in(1,2)$ and consider
$$
v(x)=|x|-|x|^\theta,\quad x\in B_1.
$$
The function $v$ is strictly positive for $x\neq0$ and satisfies the inequality
\begin{equation}\label{eq1_Lipschitz}
\Pmo(D^2v(x))+H(x,\grad v(x))\leq-\theta(\theta-1)|x|^{\theta-2}+b(1+\theta|x|^{\theta-1}),\quad x\in B_1\backslash\left\{0\right\}.
\end{equation}
Since the right hand side in \eqref{eq1_Lipschitz}  tends to $-\infty$ as $|x|\to0$, we can then pick a  $\delta=\delta(b,\theta,\left\|f\right\|_\infty)\in (0,\,1)$ 
such that
$$
\Pmo(D^2v(x))+H(x,\grad v(x))< -\left\|f\right\|_\infty \quad\text{in $\in B_\delta\backslash\left\{0\right\}$}.
$$
Moreover, in view of Proposition~\ref{upbound}, there exists a positive constant $C=C(\Omega, b, \left\|f\right\|_\infty)$ such that
\begin{equation}\label{eq2_Lipschitz}
-u(x)\leq Cd(x)\quad\forall x\in\overline\Omega.
\end{equation}
For $x_0$, $y_0\in\overline\Omega$, with $|x_0-y_0|<\delta$ and $L=\max\left(\frac{2\left\|u\right\|_\infty}{\delta-\delta^\theta},\frac{C}{1-\delta^{\theta-1}}\right)$, let
\begin{equation}\label{function v_0}
v_{y_0}(x):=u(y_0)+Lv(x-y_0),\quad x\in B_\delta(y_0).
\end{equation}
By construction 
$$
\Pmo(D^2v_{y_0}(x))+H(x,\grad v_{y_0}(x))< -\left\|f\right\|_\infty\quad\text{in $B_\delta(y_0)\backslash\left\{y_0\right\}$}
$$
and
$$
v_{y_0}(y_0)=u(y_0).
$$
We claim that 
\begin{equation}\label{eq3_Lipschitz}
u(x)\leq v_{y_0}(x)\quad\text{on $\partial(B_\delta(y_0)\cap\Omega)$},
\end{equation}
so that the comparison principle yields the conclusion
$$
u(x_0)\leq v_{y_0}(x_0)\leq u(y_0)+L|x_0-y_0|.
$$
To prove the inequality \eqref{eq3_Lipschitz} we note that for any $x\in\partial B_\delta(y_0)\cap\Omega$
$$
v_{y_0}(x)=u(y_0)+L(\delta-\delta^\theta)\geq u(y_0)+2\left\|u\right\|_\infty\geq u(x),
$$
while if $x\in \overline B_\delta(y_0)\cap\partial\Omega$,  we obtain in view of \eqref{eq2_Lipschitz}, together with the choice of $L$,
\begin{equation*}
\begin{split}
u(x)=0&\leq u(y_0)+ C d(y_0)\leq u(y_0)+C |x-y_0|\\
&\leq u(y_0)+L(|x-y_0|-|x-y_0|^\theta)=v_{y_0}(x)
\end{split}
\end{equation*}
as we wanted to show.
\end{proof}

The conditions concerning  the geometry of $\Omega$ and the smallness of the Hamiltonian in the Proposition~\ref{Lipschitz_regularity}, i.e. 
\begin{equation}\label{cond_Omega}
\Omega\in{\mathcal C}_R\quad \text{and}\quad bR<1,
\end{equation} are only used to get the inequality \eqref{eq2_Lipschitz}, in order to apply comparison principle up to the boundary. For this reason and following the arguments of the previous proof, it is  easy to obtain interior Lipschitz regularity for any bounded domain $\Omega$ and any $H$ satisfying \eqref{SC1}, assuming $u$ to be merely a subsolution of \eqref{holder1}.

Moreover the assumptions \eqref{cond_Omega} can be dropped if we require that the subsolution $u$  satisfies\eqref{eq2_Lipschitz}. 
These observations are summarized as follows.

\begin{proposition}\label{Lip-regularity}
Suppose that $\Omega$ is a bounded domain and $H$ satisfies condition \eqref{SC1}. The following holds:
\begin{enumerate}
	\item[i)] any subsolution $u$ of \eqref{holder1} is a locally Lipschitz continuous function in $\Omega$;
	\item[ii)] any subsolution $u$ of \eqref{holder1} that satisfies \erf{eq2_Lipschitz} 
	for some constant $C$ 
	is  Lipschitz continuous in $\overline\Omega$.
\end{enumerate}
The Lipschitz norm of $u$ can be estimated by a constant which depends 
on $b$ and the $L^\infty$ norms of $u$ and $f$.\\
Finally the same conclusion holds for supersolutions $u$ of \eqref{holder2}, with 
\erf{eq2_Lipschitz} replaced by the inequality $u\leq Cd$ in $\ol\gO$.
\end{proposition}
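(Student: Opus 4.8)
The plan is to extract the essential mechanism of the proof of Proposition~\ref{Lipschitz_regularity} and observe that the two geometric hypotheses \eqref{cond_Omega} entered only through \eqref{eq2_Lipschitz}, so we reorganize that argument as three separate claims.

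First I would prove (i). Fix a subsolution $u$ of \eqref{holder1} in a bounded domain $\Omega$, and let $\Omega'\Subset\Omega''\Subset\Omega$. One wants $u(x_0)-u(y_0)\le L|x_0-y_0|$ for $x_0,y_0\in\Omega'$ with $|x_0-y_0|<\delta$, where $\delta=\dist(\Omega',\partial\Omega'')$ (shrunk if necessary). The barrier is exactly the one from the proof of Proposition~\ref{Lipschitz_regularity}: with $\theta\in(1,2)$ set $v(x)=|x|-|x|^\theta$ on $B_1$, pick $\delta=\delta(b,\theta,\|f\|_\infty)\in(0,1)$ so that $\Pmo(D^2v)+H(x,\grad v)<-\|f\|_\infty$ on $B_\delta\setminus\{0\}$ via \eqref{eq1_Lipschitz}, and put $v_{y_0}(x)=u(y_0)+Lv(x-y_0)$ on $B_\delta(y_0)\subset\Omega''$ with $L=2\|u\|_{L^\infty(\Omega'')}/(\delta-\delta^\theta)$. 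On $\partial B_\delta(y_0)$ one has $v_{y_0}=u(y_0)+L(\delta-\delta^\theta)\ge u(y_0)+2\|u\|_{L^\infty(\Omega'')}\ge u(x)$, and since $B_\delta(y_0)\Subset\Omega$ there is no boundary-of-$\Omega$ portion to worry about; the comparison principle between the classical strict supersolution $v_{y_0}$ and the subsolution $u$ gives $u(x_0)\le v_{y_0}(x_0)=u(y_0)+L|x_0-y_0|(1-|x_0-y_0|^{\theta-1})\le u(y_0)+L|x_0-y_0|$. Swapping $x_0$ and $y_0$ yields the local Lipschitz bound.

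For (ii) the only change is that now one also permits $y_0$ (and the ball $B_\delta(y_0)$) to meet $\partial\Omega$; on the portion $\ol B_\delta(y_0)\cap\partial\Omega$ the subsolution satisfies $u=0$, and using the hypothesis $-u\le Cd$, i.e. \eqref{eq2_Lipschitz}, one repeats verbatim the second half of the proof of Proposition~\ref{Lipschitz_regularity}: $u(x)=0\le u(y_0)+Cd(y_0)\le u(y_0)+C|x-y_0|\le u(y_0)+L(|x-y_0|-|x-y_0|^\theta)=v_{y_0}(x)$ provided $L\ge C/(1-\delta^{\theta-1})$, so that \eqref{eq3_Lipschitz} holds and comparison again gives $u(x_0)\le u(y_0)+L|x_0-y_0|$ for all $x_0,y_0\in\ol\Omega$ with $|x_0-y_0|<\delta$; a chaining argument over $\ol\Omega$ then upgrades this to a global Lipschitz estimate, with $L$ depending only on $b$, $\|u\|_\infty$, $\|f\|_\infty$ (and the constant $C$ in \eqref{eq2_Lipschitz}, which one absorbs). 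Finally, the statement for supersolutions $u$ of \eqref{holder2} follows from the reduction already used at the start of the proof of Proposition~\ref{Lipschitz_regularity}: $w=-u$ is a subsolution of $\Pmo(D^2w)+\tilde H(x,\grad w)=-f$ with $\tilde H(x,\xi)=-H(x,-\xi)$ still satisfying \eqref{SC1}, and the hypothesis $u\le Cd$ becomes $-w\le Cd$, i.e. \eqref{eq2_Lipschitz} for $w$, so (i)--(ii) apply to $w$.

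The one point requiring a little care — the ``main obstacle,'' though it is modest — is making sure the comparison principle is legitimately applicable on $B_\delta(y_0)\cap\Omega$ (or $B_\delta(y_0)$ in the interior case): $v_{y_0}$ is only $C^2$ away from its center $y_0$, but $v(x-y_0)=|x-y_0|-|x-y_0|^\theta\ge 0$ with a strict minimum at $y_0$, so $u-v_{y_0}$ cannot attain its maximum over $\ol B_\delta(y_0)\cap\ol\Omega$ at $y_0$ unless $u$ already lies below $v_{y_0}$ there, exactly as in the cited proof; hence the singularity at the center is harmless and the boundary inequalities \eqref{eq3_Lipschitz} (or its interior analogue) close the argument. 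Everything else is a transcription of computations already carried out in Proposition~\ref{upbound} and Proposition~\ref{Lipschitz_regularity}.
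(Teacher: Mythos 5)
Your proposal is correct and is essentially the same argument the paper has in mind: the paper treats Proposition~\ref{Lip-regularity} as a remark immediately following Proposition~\ref{Lipschitz_regularity}, observing that \eqref{cond_Omega} entered only through \eqref{eq2_Lipschitz}, so the interior estimate (i) drops the boundary half of the barrier argument entirely while (ii) keeps it under the hypothesis \eqref{eq2_Lipschitz}, and the supersolution case reduces to the subsolution case via $w=-u$. Your explicit handling of the comparison near the singular vertex $y_0$ of the cone barrier and the chaining step to pass from the short-range estimate to a global Lipschitz bound are the right small additions that the paper leaves implicit.
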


This globally Lipschitz regularity result for nonnegative subsolutions of \eqref{holder1},
a consequence of Proposition~\erf{Lip-regularity} ii), 
is quite surprising, considering that the global $C^{0,\gamma}$-regularity may fails for any $\gamma\in(0,1]$ in the class of nonpositive subsolutions of \eqref{holder1}. Here below an example: the nonpositive radial function
$$
u(x)=\begin{cases}
\frac{1}{\log(1-\delta)} & \text{if $|x|\leq\delta$}\\
\frac{1}{\log(1-|x|)} & \text{if $\delta<|x|<1$}\\
0 & \text{if $|x|=1$},
\end{cases}
$$ 
is convex for $\delta\in (0,\,1)$ close to 1 and  $$\Pmo(D^2u(x))\geq0\quad\text{in $B_1$}.$$
On the other hand for any $\gamma\in(0,1]$
$$\sup_{\substack{x,y\in \overline B_1\\x\neq y}}\frac{|u(x)-u(y)|}{|x-y|^\gamma}=+\infty.$$

\section{Demi-eigenvalues}\label{demi-eigen}
\subsection{Maximum and minimum principle}
We now investigate the relationship between the generalized principal eigenvalues  $\overline{\mu}_k^\pm$ and ${\mu}_k^\pm$ given in the introduction and the validity of the maximum and minimum principle.

In the following we shall sometimes need to reinforce the assumptions on the Hamiltonian $H$. In particular: 
\begin{equation}\label{SC2}
H(x,t\xi)=tH(x,\xi)\quad\forall(x,t,\xi)\in\Omega\times\R_+\times\Rn,
\tag{SC 2}
\end{equation}
\begin{equation}\label{SC3}
\exists\,\omega\,\, \text{modulus of continuity s.t. }\,
\left|H(x,\xi)-H(y,\xi)\right|\leq\omega\left(|x-y|\left(1+|\xi|\right)\right).
\tag{SC 3}
\end{equation}

Observe that \eqref{SC2} implies \eqref{SC1} with $b=\sup_{(x,\xi)\in \Omega\times B_1}|H(x,\xi)|$ hence this will be the meaning of  $b$ under condition  \eqref{SC2}. Furthermore \eqref{SC2} and \eqref{SC3} imply that $H$ is Lipschitz continuous in the following sense:
\[
|H(x,\xi)-H(y,\xi)|\leq C|x-y||\xi|
\]
for some constant $C>0$.
Indeed, for $\eta=\frac{\xi}{|\xi||x-y|}$, $$|H(x,\xi)-H(y,\xi)|=|H(x,\eta)-H(y,\eta)||\xi||x-y|$$ and
$$ |H(x,\xi)-H(y,\xi)|\leq\omega(|x-y|(1+|\eta|))|\xi||x-y|\leq \omega(1+\mathrm{diam}(\Omega))|x-y||\xi|.$$

\begin{theorem}\label{MaxPrinc} Let $\Omega$ be a bounded domain.
Under the assumption \eqref{SC2}-\eqref{SC3},  the operator 
$$\Pmk(D^2\cdot)+H(x,\grad \cdot)+\mu\cdot$$ satisfies

\begin{itemize}

\item[i)] the minimum principle in $\Omega$ for $\mu<\overline{\mu}_k^-$,
\item[ii)]  the maximum principle in $\Omega$ for $\mu<\overline{\mu}_k^+$.
\end{itemize}
\end{theorem}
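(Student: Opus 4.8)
The plan is to deduce both assertions from a single comparison argument based on the Crandall--Ishii lemma. First, i) and ii) are dual to one another: if $u$ satisfies the hypotheses of the minimum-principle statement, then $v:=-u$ together with $\tilde H(x,\xi):=-H(x,-\xi)$ (which again obeys \eqref{SC2}--\eqref{SC3}) turns, via the identity $\Pmk(-X)=-\Ppk(X)$, the differential inequality for $\Pmk(D^2\cdot)+H(x,\grad\cdot)+\mu\cdot$ into the subsolution inequality for $\Ppk(D^2\cdot)+\tilde H(x,\grad\cdot)+\mu\cdot$, with $\limsup_{x\to\partial\Omega}v\le0$; moreover the threshold $\overline{\mu}_k^-$ of the former equals the analogous threshold $\overline{\mu}^+$ of the latter. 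Hence it is enough to prove the maximum principle, below the relevant threshold, for an operator $\mathcal Q(D^2\cdot)+H(x,\grad\cdot)+\mu\cdot$ with $\mathcal Q$ degenerate elliptic and positively $1$-homogeneous (this covers $\Pmk$ for ii) and $\Ppk$ for i)); I write $\Pmk$ below.

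So let $\mu<\overline{\mu}_k^+$ and let $u$ be an upper semicontinuous viscosity subsolution of $\Pmk(D^2u)+H(x,\grad u)+\mu u\ge0$ in $\Omega$ with $\limsup_{x\to\partial\Omega}u\le0$; extending $u$ to $\overline\Omega$ by upper limits we may assume $u$ upper semicontinuous on $\overline\Omega$ with $u\le0$ on $\partial\Omega$. By the definition of $\overline{\mu}_k^+$ as a supremum there exist $\mu_1\in(\mu,\overline{\mu}_k^+]$ and a real-valued, lower semicontinuous $\phi>0$ on $\overline\Omega$ which is a viscosity supersolution of $\Pmk(D^2\phi)+H(x,\grad\phi)+\mu_1\phi\le0$ in $\Omega$; set $m_0:=\min_{\overline\Omega}\phi>0$. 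By the positive $1$-homogeneity of $\Pmk$ and of $H$ in the gradient (i.e.\ \eqref{SC2}), for every $M>0$ the function $M\phi$ is again a viscosity supersolution of the same $\mu_1$-equation.

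Suppose, for contradiction, that $\sup_\Omega u>0$. Since $t\mapsto\max_{\overline\Omega}(u-t\phi)$ is finite, convex, strictly decreasing, positive at $t=0$ and $\to-\infty$ as $t\to+\infty$, there is a unique $M>0$ with $\max_{\overline\Omega}(u-M\phi)=0$; write $V:=M\phi$ and $\varepsilon_0:=(\mu_1-\mu)Mm_0>0$. Then $u\le V$ on $\overline\Omega$, the contact set $Z=\{x\in\overline\Omega:\,u(x)=V(x)\}$ is nonempty and compact, and $Z\subset\Omega$ because $u\le0<V$ on $\partial\Omega$; moreover, since $\mu<\mu_1$ and $V\ge Mm_0$, $V$ is a \emph{strict} viscosity supersolution of the $\mu$-equation, namely
\[
\Pmk(D^2V)+H(x,\grad V)+\mu V\ \le\ (\mu-\mu_1)V\ \le\ -\varepsilon_0 \quad\text{in }\Omega .
\]
Now double the variables: for $\alpha>0$ pick $(x_\alpha,y_\alpha)\in\overline\Omega\times\overline\Omega$ maximizing $u(x)-V(y)-\tfrac\alpha2|x-y|^2$. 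Since $\max_{\overline\Omega}(u-V)=0$ is attained exactly on the compact set $Z\subset\Omega$, \cite[Lemma~3.1]{CIL} gives, along a subsequence, $x_\alpha,y_\alpha\to\hat x\in Z$, $\alpha|x_\alpha-y_\alpha|^2\to0$ and $u(x_\alpha)-V(y_\alpha)\to0$; in particular $x_\alpha,y_\alpha\in\Omega$ for $\alpha$ large. The Crandall--Ishii lemma \cite{CIL} then yields $X_\alpha\le Y_\alpha$ in $\mathbb S^N$ such that, with $p_\alpha:=\alpha(x_\alpha-y_\alpha)$,
\[
\Pmk(X_\alpha)+H(x_\alpha,p_\alpha)+\mu\,u(x_\alpha)\ge0,\qquad
\Pmk(Y_\alpha)+H(y_\alpha,p_\alpha)+\mu\,V(y_\alpha)\le-\varepsilon_0 .
\]
Subtracting, and using $\Pmk(X_\alpha)\le\Pmk(Y_\alpha)$ (degenerate ellipticity), $|H(x_\alpha,p_\alpha)-H(y_\alpha,p_\alpha)|\le\omega\big(|x_\alpha-y_\alpha|+\alpha|x_\alpha-y_\alpha|^2\big)\to0$ (by \eqref{SC3}) and $u(x_\alpha)-V(y_\alpha)\to0$, the left-hand side of
\[
\big(\Pmk(X_\alpha)-\Pmk(Y_\alpha)\big)+\big(H(x_\alpha,p_\alpha)-H(y_\alpha,p_\alpha)\big)+\mu\big(u(x_\alpha)-V(y_\alpha)\big)\ \ge\ \varepsilon_0
\]
has $\limsup\le0$ as $\alpha\to+\infty$, whereas it is $\ge\varepsilon_0>0$ for every $\alpha$ --- a contradiction. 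Therefore $u\le0$ in $\Omega$, which proves ii), and i) follows from the duality above.

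I expect the one real difficulty to be that the competitor $\phi$ (hence $V$) is only lower semicontinuous, so it cannot be used directly as a $C^2$ test function: this forces the passage through the doubling/Crandall--Ishii machinery, the use of \cite[Lemma~3.1]{CIL} to localize the maximum at an interior contact point, and the verification that the first-order error $|x_\alpha-y_\alpha|(1+|p_\alpha|)=|x_\alpha-y_\alpha|+\alpha|x_\alpha-y_\alpha|^2$ vanishes so that \eqref{SC3} applies. The remaining ingredients --- the two homogeneity reductions and the construction of the critical scaling factor $M$ --- are routine, and the whole scheme hinges on the \emph{strict} inequality $\mu<\mu_1$ provided by the definition of $\overline{\mu}_k^+$ as a supremum (this is exactly what makes $V$ a strict supersolution, and is consistent with the later fact that the principle may fail at $\mu=\overline{\mu}_k^\pm$).
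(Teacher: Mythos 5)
Your proof is correct, and it rests on the same core machinery as the paper's: compare the subsolution against a positive supersolution at a strictly higher level $\mu_1>\mu$, localize the touching to an interior point, and run doubling of variables plus the Crandall--Ishii lemma, using degenerate ellipticity for the Hessian terms, \eqref{SC3} for the Hamiltonian error, and the gap $\mu<\mu_1$ for the contradiction. The difference is in how the critical touching is arranged. The paper sets $\gamma=\sup_\Omega v/u$, minimizes $v-(\gamma-\varepsilon)u$ for each small $\varepsilon$, runs a separate doubling argument for each $\varepsilon$, and closes with the algebraic inequality $1<\rho/\mu\leq\gamma/(\gamma-\varepsilon)$ as $\varepsilon\to0$ --- which is why it must first dispose of $\mu<0$ by appealing to known results. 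You instead pick the unique $M$ with $\max_{\overline\Omega}(u-M\phi)=0$ and exploit that $V=M\phi$ is a \emph{strict} supersolution of the $\mu$-equation with quantitative slack $\varepsilon_0=(\mu_1-\mu)Mm_0>0$; one doubling pass then yields the contradiction $\varepsilon_0\leq0$. Your version is somewhat cleaner: it avoids the case split on the sign of $\mu$, avoids the $\varepsilon$-family, and makes the role of the strict inequality $\mu<\mu_1$ entirely transparent. One minor presentational remark: the paper states the conclusion for both $\Pmk$ and proves only the $\Pmk$-minimum-principle case, remarking that ii) follows by ``minor changes''; your explicit duality reduction $v\mapsto -v$, $H\mapsto -H(\cdot,-\cdot)$, $\Pmk\leftrightarrow\Ppk$ is the natural way to make that remark precise, and the observation that the argument applies to any degenerate elliptic, positively $1$-homogeneous $\mathcal Q$ matches the paper's own closing remark after the proof.
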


\begin{proof} The proof follows the argument of \cite{BD}.

Without loss of generality we can suppose that $\mu\geq 0$, because otherwise the results are well known. 
We shall detail the case i) of the minimum principle,  since with minor changes the arguments prove ii) as well.  We argue by contradiction by assuming that $v$ is a solution of
\begin{equation}\label{eq1prop}
\left\{\begin{array}{lc}
\Pmk(D^2v)+H(x,\grad v)+\mu v\leq 0& \mbox{in}\ \Omega\\
\displaystyle \liminf_{x\to\partial\Omega}v(x)\geq0& 
\end{array}
\right.
\end{equation}
and $v(x_0)<0$ for some $x_0\in\Omega$. 

By the definition of $\overline{\mu}_k^-$ there exists $\rho\in(\mu,\overline{\mu}_k^-)$ and  $u<0$ in $\overline\Omega$, a solution of
\begin{equation}\label{eq2prop}
\Pmk(D^2u)+H(x,\grad u)+\rho u\geq0\quad\text{in $\Omega$}.
\end{equation}
The function $\frac{v}{u}$ is upper semicontinuous in the compact set 
$$K=\left\{x\in\Omega\,:\,\frac{v(x)}{u(x)}\geq\frac{v(x_0)}{u(x_0)}\right\}$$
and if  $\gamma:=\sup_{x\in\Omega}\frac{v(x)}{u(x)}$, then
\begin{equation}\label{eq3prop}
\gamma=\sup_{x\in K}\frac{v(x)}{u(x)}<+\infty \quad \mbox{and}\quad 0<\frac{v(x_0)}{u(x_0)}\leq\gamma.
\end{equation}

For $0<\varepsilon<\gamma$ the lower semicontinuous function $v
-(\gamma-\varepsilon)u$ reaches its negative minimum in $\Omega$, say
$$
\min_{x\in\Omega}\left(v(x)-(\gamma-\varepsilon)u(x)\right)=v(x_\varepsilon)-(\gamma-\varepsilon)u(x_\varepsilon),\quad x_\varepsilon\in\Omega,
$$
since 
$$
\liminf_{x\to\partial\Omega}\left(v(x)-(\gamma-\varepsilon)u(x)\right)\geq\liminf_{x\to\partial\Omega}v(x)\geq0
$$
and by definition of the supremum there exists $y_\varepsilon\in\Omega$ such that 
$$
v(y_\varepsilon)-(\gamma-\varepsilon)u(y_\varepsilon)<0.
$$
Moreover, by lower semicontinuity, we can find a subdomain $\Omega'\subset\subset\Omega$, depending on $\varepsilon$ and containing $x_\varepsilon$, for which
\begin{equation}\label{eq4prop}
\min_{\partial\Omega'}\left(v(x)-(\gamma-\varepsilon)u(x)\right)>v(x_\varepsilon)-(\gamma-\varepsilon)u(x_\varepsilon)
\end{equation}
and a sequence $(x_k,y_k)\in\overline{\Omega'}\times\overline{\Omega'}$ such that
$$
v(x_k)-(\gamma-\varepsilon)u(y_k)+\frac k2|x_k-y_k|^2=\min_{(x,y)\in\overline{\Omega'}\times\overline{\Omega'}}\left(v(x)-(\gamma-\varepsilon)u(y)+\frac k2|x-y|^2\right).
$$
Using \cite[Lemma 3.1]{CIL}, up to subsequences, we have 
$$\frac k2|x_k-y_k|^2\to0,\;\; (x_k,y_k)\to(\hat {x}_\varepsilon,\hat {x}_\varepsilon)\,\,\,\text{for some}\,\, \hat {x}_\varepsilon\in\Omega'$$ and 
$$(v(x_k),(\gamma-\varepsilon)u(y_k))\to(v(\hat {x}_\varepsilon),(\gamma-\varepsilon)u(\hat {x}_\varepsilon))\quad \text{for $k\to+\infty$.}$$ 
Hence $(x_k,y_k)\in\Omega'\times\Omega'$ for large $k$   and in view of \cite[Theorem 3.2]{CIL} there exist $X_k$ and $Y_k$, $N\times N$ symmetric matrices, such that
$$
X_k\geq Y_k,\;\left(k(y_k-x_k),X_k\right)\in \overline J^{2,-}v(x_k),\;\left(k(y_k-x_k),Y_k\right)\in \overline J^{2,+}(\gamma-\varepsilon)u(y_k).
$$   
Since the function $(\gamma-\varepsilon)u(x)$ is still a solution of (\ref{eq2prop}) by the homogeneity assumption (\ref{SC2}), we have from {(\ref{eq1prop})-(\ref{eq2prop})-(\ref{SC3})}, that
\begin{equation*}
\begin{split}
\mu v(x_k)&\leq-\Pmk(X_k)-H(x_k,k(y_k-x_k))\\
&\leq-\Pmk(Y_k)-H(y_k,k(y_k-x_k))+\omega\left(|x_k-y_k|(1+k|x_k-y_k|)\right)\\
&\leq	\rho(\gamma-\varepsilon)u(y_k)+\omega\left(|x_k-y_k|(1+k|x_k-y_k|)\right).
\end{split}
\end{equation*}
Sending $k\to+\infty$
\begin{equation}\label{eq5prop}
\mu v(\hat{x}_\varepsilon)\leq\rho(\gamma-\varepsilon)u(\hat{x}_\varepsilon).
\end{equation}
If $\mu=0$ this is a contradiction.  Otherwise, for $\mu>0$,
since $\gamma u(\hat{x}_\varepsilon)\leq v(\hat{x}_\varepsilon)$ we deduce from (\ref{eq5prop})
that
$$1<\frac\rho\mu\leq\frac{\gamma}{\gamma-\varepsilon};$$
which is a contradiction for small $\varepsilon$.
\end{proof}

The same proof as above works for general, positively homogeneous of degree one, 
degenerate elliptic operators 
$F(x,\grad \cdot, D^2\cdot)$, to which the proof of comparison principle 
applies (see \cite[Theorem 3.3]{CIL}). 

\medskip
Theorem \ref{MaxPrinc} implies the following
\begin{corollary}\label{corbound}
Under the assumption \eqref{SC2}-\eqref{SC3}, if $B_{R_1}\subset\Omega$,  then
\begin{equation}\label{bounds1-eigenvalue}
\overline{\mu}_k^-\leq\frac{2(k+bR_1)(2+k+bR_1)}{R_1^2}.
\end{equation}
Moreover if $\Omega\subset B_{R_2}$ and $bR_2\leq k$, then
\begin{equation}\label{bounds2-eigenvalue}
\overline{\mu}_k^-\geq\frac{2(k-bR_2)}{R_2^2}.
\end{equation}
\end{corollary}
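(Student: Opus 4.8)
The plan is to obtain both inequalities in Corollary \ref{corbound} by exhibiting explicit radial barrier functions on balls and invoking Theorem \ref{MaxPrinc}, i.e. exploiting that the minimum principle holds for $\mu<\overline\mu_k^-$.

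For the upper bound \eqref{bounds1-eigenvalue}, suppose $B_{R_1}=B_{R_1}(x_0)\subset\Omega$. The idea is that if $\overline\mu_k^-$ were too large, then for some $\mu$ with $\mu<\overline\mu_k^-$ the minimum principle would hold in $B_{R_1}$ (note: Theorem \ref{MaxPrinc} gives it in $\Omega$, but by monotonicity of the class of subsolutions one gets it on the subdomain $B_{R_1}$, or one argues directly that $\overline\mu_k^-(\Omega)\le\overline\mu_k^-(B_{R_1})$ since a negative subsolution on $\overline\Omega$ restricts to one on $\overline{B_{R_1}}$). I would then produce a function $v$ on $B_{R_1}$ violating the minimum principle: take $v(x)=-\psi(|x-x_0|)$ with $\psi$ chosen so that $v<0$ in $B_{R_1}$, $v\to 0$ on $\partial B_{R_1}$ (or $\liminf\ge 0$ on the boundary), yet $\Pmk(D^2v)+H(x,\grad v)+\mu v\le 0$ in $B_{R_1}$ — which contradicts the minimum principle. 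A clean choice is an affine-in-$r^2$ profile, or rather something like $\psi(r)=R_1^2-r^2$ composed appropriately; using Lemma \ref{elementary} one computes the eigenvalues of the Hessian, estimates $\Pmk$ by keeping the $k$ smallest, bounds $|H|\le b|\grad v|$ by \eqref{SC1} (valid since \eqref{SC2} holds), and reads off the precise constant $\frac{2(k+bR_1)(2+k+bR_1)}{R_1^2}$ as the threshold value of $\mu$ for which the supersolution inequality can be arranged. The factorized form of the bound strongly suggests the barrier has the shape $v(x)=(R_1^2-|x-x_0|^2)\cdot(\text{something})$ or is built from $|x-x_0|^2$ with a coefficient tuned so both the second-order and zeroth-order terms balance; I would set up the inequality $\Pmk(D^2v)+b|\grad v|+\mu v\le0$ pointwise and solve for the largest admissible $\mu$.

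For the lower bound \eqref{bounds2-eigenvalue}, assume $\Omega\subset B_{R_2}$ and $bR_2\le k$. Here I would directly exhibit a \emph{negative subsolution} of \eqref{eqev} in $\overline\Omega$ for $\mu=\frac{2(k-bR_2)}{R_2^2}$, which by definition of $\overline\mu_k^-$ gives $\overline\mu_k^-\ge\mu$. Again a radial ansatz: the natural candidate, in the spirit of the barrier $w(x)=R^2-|x|^2$ already used repeatedly in the excerpt (e.g. in Proposition \ref{upbound}), is $w(x)=-(R_2^2-|x|^2)$, which is negative on $\overline\Omega$ (since $\Omega\subset B_{R_2}$ forces $|x|<R_2$), with $D^2w=2I$, so all eigenvalues equal $2$, hence $\Pmk(D^2w)=2k$, $|\grad w|=2|x|\le 2R_2$, and $\grad w$ points inward so $H(x,\grad w)\ge -b|\grad w|\ge -2bR_2$. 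Then $\Pmk(D^2w)+H(x,\grad w)+\mu w\ge 2k-2bR_2+\mu\cdot(-(R_2^2-|x|^2))\ge 2k-2bR_2-\mu R_2^2$, which is $\ge 0$ precisely when $\mu\le\frac{2(k-bR_2)}{R_2^2}$. This shows $w$ is a genuine negative subsolution on $\overline\Omega$ for that $\mu$, giving the claim. I should double-check the viscosity sense is automatic since $w\in C^2$, and that \eqref{SC1} is available (it is, via \eqref{SC2}).

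The main obstacle I anticipate is getting the \emph{exact} constant in \eqref{bounds1-eigenvalue} right: the factor $(k+bR_1)(2+k+bR_1)$ is not what a crude one-parameter quadratic barrier produces, so the barrier likely needs two tuned parameters (or a profile like $r^\alpha$ with $\alpha$ optimized, or a product of the two factors coming from separately handling the $\lambda_N$ eigenvalue contribution and the gradient term). I would handle this by writing $v(x)=-g(|x-x_0|^2)$ for a well-chosen smooth decreasing $g$ with $g(R_1^2)=0$, computing via Lemma \ref{elementary} that the $N-1$ equal eigenvalues are $-2g'$ and the radial one is $-2g'-4|x-x_0|^2 g''$ (with $\Pmk$ picking the $k$ smallest, which for a convex-enough profile are the $N-1$ copies of $-2g'$ when $g''\le 0$, so $\Pmk(D^2v)\le -2kg'$), then imposing $-2kg' + 2b|x-x_0||g'| \cdot 2 + \mu(-g)\le 0$ and reverse-engineering $g$; matching the resulting sharp $\mu$ to the stated constant is the delicate bookkeeping step. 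The second bound \eqref{bounds2-eigenvalue} should be essentially immediate once the ansatz $w=-(R_2^2-|x|^2)$ is written down.
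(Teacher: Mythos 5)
Your lower bound argument is essentially the paper's: the barrier $w(x)=-(R_2^2-|x|^2)$ with all Hessian eigenvalues equal to $2$ gives exactly the claimed subsolution inequality when $\mu=\frac{2(k-bR_2)}{R_2^2}$. One small point you gloss over: the definition of $\overline{\mu}_k^-$ requires $w<0$ on $\overline\Omega$, not just on $\Omega$, and $\Omega\subset B_{R_2}$ allows $\partial\Omega$ to touch $\partial B_{R_2}$, where $w$ would vanish. The paper handles this by enlarging $R_2$ slightly (still keeping $bR'<k$), which costs nothing in the final estimate since the bound is continuous in $R_2$; your write-up should do the same.

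For the upper bound your high-level plan (exhibit a barrier violating the minimum principle in $B_{R_1}$ and invoke Theorem \ref{MaxPrinc}) is the paper's plan, but the direction you sketch for finding the barrier would not work. You write the ansatz $v(x)=-g(|x-x_0|^2)$ and argue that ``for a convex-enough profile ... when $g''\le 0$ ... $\Pmk(D^2v)\le -2kg'$.'' Two problems. First, for this ansatz the $N-1$ repeated eigenvalues of $D^2v$ are $-2g'$ and the radial one is $-2g'-4r^2g''$; when $g''\le 0$ the radial one is the largest, so $\Pmk(D^2v)=-2kg'$ exactly, and when $g''\ge 0$ the radial one is the smallest and is the one $\Pmk$ keeps, giving $\Pmk(D^2v)=-2kg'-4r^2g''$. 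Second, and more to the point, the crude bound $\Pmk(D^2v)\le -2kg'$ is \emph{too weak} to close the argument: with $g(s)=(R_1^2-s)^2$ one gets $-2kg'=4k(R_1^2-|x|^2)$, so the ratio $\frac{-2kg'+b|\grad v|}{-v}$ behaves like $\frac{C}{R_1^2-|x|^2}$ and blows up at $\partial B_{R_1}$. You need the \emph{negative} contribution $-4r^2g''$ from the radial eigenvalue (equivalently $g''>0$, not $g''\le 0$) to cancel this blow-up. The paper's choice is precisely the quartic $w(x)=-(R_1^2-|x|^2)^2$, for which
\[
\frac{\Pmk(D^2w)+b|\grad w|}{-w}\le 4\left(\frac{k+bR_1}{R_1^2-|x|^2}-\frac{2|x|^2}{(R_1^2-|x|^2)^2}\right),
\]
and then a two-region split (depending on whether the parenthesis is nonpositive or not) gives the stated constant $\frac{2(k+bR_1)(2+k+bR_1)}{R_1^2}$. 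Without identifying that the radial eigenvalue is the one kept by $\Pmk$ and that its negative part is essential, your sketch cannot produce a finite bound, let alone the exact constant; this is a genuine gap rather than mere bookkeeping.
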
 

\begin{proof} For $B_{R_1}\subset\Omega$ consider the function
$$w(x)=-(R_1^2-|x|^2)^2$$
extended to zero outside of $B_{R_1}$, as in \cite{BNV}. 
Then  
\begin{equation*}
\begin{split}
\sup_{|x|<R_1}\frac{\Pmk(D^2w)+H(x,\grad w)}{-w}&\leq \sup_{|x|<R_1}\frac{\Pmk(D^2w)+b|\grad w |}{-w}\\
&\leq 4\sup_{|x|<R_1}\left(\frac{k+bR_1}{(R_1^2-|x|^2)}-\frac{2|x|^2}{(R_1^2-|x|^2)^2}\right). 
\end{split}
\end{equation*}
In the set $\Omega_1=\left\{x\in B_{R_1}:\,|x|^2\geq\frac{R_1^2(k+bR_1)}{2+k+bR_1}\right\}$ we have
$$\frac{k+bR_1}{(R_1^2-|x|^2)}-\frac{2|x|^2}{(R_1^2-|x|^2)^2}\leq0,$$
while in $\Omega_2=B_{R_1}\backslash\Omega_1$
$$\frac{k+bR_1}{(R_1^2-|x|^2)}-\frac{2|x|^2}{(R_1^2-|x|^2)^2}\leq\frac{k+bR_1}{(R_1^2-|x|^2)}\leq\frac{(k+bR_1)(2+k+bR_1)}{2R^2_1}.$$
Hence $v$ is a negative solution in $\Omega$ of 
$$\Pmk(D^2w)+H(x,\grad w)+\frac{2(k+bR_1)(2+k+bR_1)}{R_1^2}w\leq 0,$$
which is zero on the boundary  $\partial\Omega$. This contradicts the minimum principle and,  by Theorem \ref{MaxPrinc},
$$\overline\mu^-_k\leq \frac{2(k+bR_1)(2+k+bR_1)}{R_1^2},$$
leading to (\ref{bounds1-eigenvalue}).\\
\smallskip

Let $\Omega\subset B_{R_2}$ and $w(x)=-(R_2^2-|x|^2)$. For $bR_2<k$ (the case $bR_2=k$ is trivial) we may assume as in the proof of Proposition~\ref{infinito} that $\overline\Omega\subset B_{R_2}$, so $w<0$ in $\overline\Omega$ and
\begin{equation*}
\begin{split}
\Pmk(D^2w)+H(x,\grad w)+\mu w&\geq\Pmk(D^2w)-b|\grad w|+\mu w\\
&=2(k-b|x|)+\mu\left(|x|^2-R_2^2\right)\\
&\geq 2(k-bR_2)-\mu R_2^2=0
\end{split}
\end{equation*}
if $\mu=\frac{2(k-bR_2)}{R_2^2}$ and therefore
$$\overline\mu_k^-\geq\frac{2(k-bR_2)}{R_2^2}.$$
\end{proof}

We now impose some conditions on the domain $\Omega$. For the maximum principle we get
\begin{proposition} \label{infinito} Under the assumption \eqref{SC1}, if $\Omega\subset B_R$ then, for any $k<N$, 
\begin{equation}\label{eqinfinito}
bR<k\quad\Longrightarrow\quad\mu_k^+=\overline{\mu}_k^+=+\infty.
\end{equation}
In particular, in the case $H\equiv0$, for any bounded domain $\Omega$,  $\mu_k^+=\overline{\mu}_k^+=+\infty$ and 
 the operator $\Pmk(D^2\cdot)+\mu\cdot$ satisfies the maximum principle for any $\mu$.
\end{proposition}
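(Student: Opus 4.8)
The plan is to prove the stronger statement $\ol\mu_k^+=+\infty$; since $\ol\mu_k^+\le\mu_k^+$ this immediately gives $\mu_k^+=+\infty$ as well, which is \eqref{eqinfinito}. Thus I need only exhibit, for each fixed $\mu\in\R$, a function that is positive on $\ol\gO$ and is a (classical, hence viscosity) supersolution of \eqref{eqev}. I would use the Gaussian‑type barrier
\[
w(x)=e^{\alpha(R^2-|x|^2)},\qquad \alpha>0 \text{ to be chosen large.}
\]
This $w$ is smooth, strictly positive on all of $\Rn$, and since $\gO\subset B_R$ it satisfies $w\ge 1$ on $\ol\gO\subseteq\ol B_R$.

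A direct computation, or Lemma \ref{elementary} applied to $\eta(r)=e^{\alpha(R^2-r^2)}$, gives $\grad w(x)=-2\alpha x\,w(x)$ and $D^2w(x)=w(x)\,(-2\alpha I+4\alpha^2 x\otimes x)$, whose eigenvalues, for $x\neq 0$, are $-2\alpha\,w(x)$ with multiplicity $N-1$ and $(-2\alpha+4\alpha^2|x|^2)\,w(x)$ with multiplicity one (at $x=0$ they are all equal to $-2\alpha\,w(0)$). Because $-2\alpha+4\alpha^2|x|^2\ge-2\alpha$, the $N-1$ ``tangential'' eigenvalues $-2\alpha\,w(x)$ are the smallest ones; this is precisely where the restriction $k<N$ enters, and it yields $\Pmk(D^2w(x))=-2k\alpha\,w(x)$ in $\gO$. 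Since moreover $|\grad w(x)|=2\alpha|x|\,w(x)\le 2\alpha R\,w(x)$ for $x\in\gO\subset B_R$, the structure condition \eqref{SC1} gives, in $\gO$,
\[
\Pmk(D^2w)+H(x,\grad w)+\mu w\le(-2k\alpha+2b\alpha R+\mu)\,w=(2\alpha(bR-k)+\mu)\,w .
\]
As $bR<k$, the coefficient of $\alpha$ is negative, so choosing $\alpha\ge\mu/\bigl(2(k-bR)\bigr)$ (any $\alpha>0$ works when $\mu\le 0$) makes the right‑hand side $\le 0$ in $\gO$; being $C^2$, $w$ is then a viscosity supersolution of \eqref{eqev} that is positive on $\ol\gO$, so $\mu\le\ol\mu_k^+$. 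Since $\mu\in\R$ was arbitrary, $\ol\mu_k^+=\mu_k^+=+\infty$.

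For the final assertion, suppose $H\equiv 0$: then \eqref{SC1} holds with $b=0$, and a bounded domain $\gO$ lies in some $B_R$ with $bR=0<k$, so the first part gives $\mu_k^+=\ol\mu_k^+=+\infty$; as $H\equiv 0$ trivially satisfies \eqref{SC2}--\eqref{SC3}, Theorem \ref{MaxPrinc}(ii) then yields the maximum principle for every $\mu<\ol\mu_k^+=+\infty$, i.e.\ for all $\mu\in\R$. There is no serious obstacle in this argument; the only delicate point is the eigenvalue ordering — one must check that the $N-1\ge k$ degenerate directions of $D^2w$ are exactly the ones selected by $\Pmk$. Note that a polynomial barrier cannot do the job: to absorb an arbitrarily large $\mu$ one genuinely needs $\Pmk(D^2w)$ to be a large negative multiple of $w$ itself, which is precisely the feature of the exponential $w=e^{\alpha(R^2-|x|^2)}$.
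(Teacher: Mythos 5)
Your proof is correct: for each $\mu$ you produce an explicit radial $C^2$ function, positive on $\ol\gO$, which is a classical (hence viscosity) supersolution of \eqref{eqev}, exactly the strategy of the paper; the eigenvalue computation, the use of \eqref{SC1} with $|x|\le R$, and the choice $\alpha\ge\mu/(2(k-bR))$ are all in order, as is the deduction of the last assertion from Theorem \ref{MaxPrinc} ii) since $H\equiv0$ satisfies \eqref{SC2}--\eqref{SC3}. The only difference from the paper is the barrier: the paper takes $w(x)=(R'^2-|x|^2)^\gamma$ with $\gamma=\mu R'^2/(2(k-bR'))$ large, after slightly enlarging $R$ to $R'>R$ (keeping $bR'<k$) so that $\ol\gO\subset B_{R'}$ and $w>0$ on $\ol\gO$; your exponential $e^{\alpha(R^2-|x|^2)}$ is positive on all of $\Rn$, which lets you skip that enlargement step. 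However, your closing remark that a power-type barrier ``cannot do the job'' is false: for the paper's choice one has $\Pmk(D^2w)\le -2\gamma k(R'^2-|x|^2)^{\gamma-1}\le -\frac{2\gamma k}{R'^2}\,w$, so taking $\gamma$ proportional to $\mu$ does make $\Pmk(D^2w)$ an arbitrarily large negative multiple of $w$, and the paper's computation closes just as yours does. So both barriers work; yours trades the $\mu$-dependent exponent and the $R\mapsto R'$ adjustment for a $\mu$-dependent coefficient $\alpha$ in the exponential.
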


\begin{proof}
Choose any $\mu>0$ and assume without loss of generality that $\overline \Omega\subset B_R$ and $\gamma:=\frac{\mu R^2}{2(k-bR)}>1$, replacing if necessary $R$ with $R'>R$ in order that $k-bR'$ is positive and sufficiently close to 0.  Let $w$ be the function introduced in section \ref{barriers}, then $w(x)>0$ in $\overline\Omega$ and
\begin{equation*}
\begin{split}
&\;\quad\Pmk(D^2w(x))+H(x,\grad w(x))+\mu w(x)\\&\leq\Pmk(D^2w(x))+b|\grad w(x)|+\mu w(x)\\
&=-2\gamma k (R^2-|x|^2)^{\gamma-1}+2\gamma b|x|(R^2-|x|^2)^{\gamma-1}+\mu (R^2-|x|^2)^{\gamma}\\
&\leq (R^2-|x|^2)^{\gamma-1}(-2\gamma (k-bR) +\mu R^2)\\
&=0.
\end{split}
\end{equation*} 
By definition, we have obtained that 
$\mu_k^+\geq\overline{\mu}_k^+=+\infty$.
\end{proof}

For the minimum principle, the assumptions are slightly stronger.
\begin{theorem}\label{thm3} Let $\Omega\in\mathcal{C}_R$, and assume 
\eqref{SC2}-\eqref{SC3} and that $bR<k$. Then,  
$$\mu_k^- =\overline{\mu}_k^-,$$
and the minimum principle holds true if and only if $\mu<\mu_k^-$.
\end{theorem}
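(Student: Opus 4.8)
The statement asserts two things, the identity $\mu_k^-=\overline{\mu}_k^-$ and the characterisation of the minimum principle. Since $\overline{\mu}_k^-\le\mu_k^-$ always and, by Theorem~\ref{MaxPrinc}(i), the minimum principle holds for $\mu<\overline{\mu}_k^-$, it suffices to prove: \emph{(a)} the minimum principle fails for every $\mu\ge\overline{\mu}_k^-$; \emph{(b)} $\mu_k^-\le\overline{\mu}_k^-$. Indeed (a) and Theorem~\ref{MaxPrinc}(i) say the minimum principle holds iff $\mu<\overline{\mu}_k^-$, and then (b) rewrites this as ``iff $\mu<\mu_k^-$''. For (a) I would first note that the minimum principle is monotone in $\mu$: if it holds at $\mu_1$ and $\mu<\mu_1$, it holds at $\mu$. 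Given a lower semicontinuous $v$ with $\Pmk(D^2v)+H(x,\grad v)+\mu v\le0$ in $\gO$ and $\liminf_{x\to\pl\gO}v\ge0$, the truncation $w=\min(v,0)$ is lower semicontinuous, $\le0$, with $\liminf_{x\to\pl\gO}w\ge0$, and it is a supersolution of $\Pmk(D^2\cdot)+H(x,\grad\cdot)+\mu_1\cdot=0$: if $\varphi\in C^2$ touches $w$ from below at $x$ and $w(x)<0$, then $\varphi$ touches $v$ from below there and $\Pmk(D^2\varphi(x))+H(x,\grad\varphi(x))+\mu_1w(x)\le\Pmk(D^2\varphi(x))+H(x,\grad\varphi(x))+\mu v(x)\le0$ since $w(x)=v(x)<0$ and $\mu<\mu_1$; if $w(x)=0$, then $\grad\varphi(x)=0$, $D^2\varphi(x)\le0$, so $\Pmk(D^2\varphi(x))\le0$ and $H(x,0)=0$ by \eqref{SC1}. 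Hence $w\ge0$, i.e.\ $v\ge0$. Now Proposition~\ref{supersol} states that the minimum principle fails at $\mu=\overline{\mu}_k^-$, and monotonicity propagates this to all $\mu\ge\overline{\mu}_k^-$, proving (a).

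\textbf{Part (b), the reduction to Theorem~\ref{MaxPrinc}.} Suppose for contradiction $\overline{\mu}_k^-<\mu_k^-$ and fix $\overline{\mu}_k^-<\mu<\rho<\mu_k^-$. By definition of $\mu_k^-$ there is an upper semicontinuous $u<0$ in $\gO$ with $\Pmk(D^2u)+H(x,\grad u)+\rho u\ge0$; since $\rho>\overline{\mu}_k^-\ge\tfrac{2(k-bR)}{R^2}>0$ by Corollary~\ref{corbound} (using $\gO\subset B_R$) and $u<0$, $u$ is also a subsolution of $\Pmk(D^2\cdot)+H(x,\grad\cdot)\ge0$. By (a) the minimum principle fails at $\mu$, so there is a lower semicontinuous $v$ with $\Pmk(D^2v)+H(x,\grad v)+\mu v\le0$, $\liminf_{x\to\pl\gO}v\ge0$, $v(x_0)<0$; replacing $v$ by $\min(v,0)$ we may take $v\le0$ and $v\to0$ at $\pl\gO$. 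Now I run the proof of Theorem~\ref{MaxPrinc}(i) with these $u,v$. Set $\gamma=\sup_\gO v/u\in(0,+\infty]$ (positive because $v(x_0)/u(x_0)>0$). For each $t\in(0,\gamma)$ the lower semicontinuous function $v-tu$ satisfies $v-tu\ge v$, hence stays above any fixed negative level near $\pl\gO$ (where $v\to0$), while being negative somewhere since $t<\gamma$; it therefore attains a negative minimum at an interior point. The doubling–of–variables argument of Theorem~\ref{MaxPrinc}, using \eqref{SC2}--\eqref{SC3} exactly as there, then produces $\hat x_t\in\gO$ with $\mu\,v(\hat x_t)\le\rho\,t\,u(\hat x_t)$; since $v(\hat x_t)<0$ and $t<v(\hat x_t)/u(\hat x_t)\le\gamma$, dividing by $v(\hat x_t)$ gives $\mu\ge\rho\,t/\gamma$, and $t\uparrow\gamma$ forces $\mu\ge\rho$ — a contradiction. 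So everything comes down to showing $\gamma<+\infty$.

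\textbf{Finiteness of $\gamma$.} Since $v/u=|v|/|u|$ on $\gO$, and since on $\gO\setminus\gO_\delta$ one has $\inf|u|>0$ (upper semicontinuity) and $v$ bounded, $\gamma<+\infty$ reduces to $|v|\le C|u|$ near $\pl\gO$. For $v$: as $v\le0$ and $v\to0$ at $\pl\gO$, $\Pmk(D^2v)+H(x,\grad v)\le-\mu v$ is arbitrarily small near $\pl\gO$ and bounded on $\gO\setminus\gO_\delta$, so $-v$ is a subsolution of $\Ppk(D^2\cdot)+\widetilde H(x,\grad\cdot)\ge-m$ in $\gO$ for a suitable $m$, where $\widetilde H(x,\xi):=-H(x,-\xi)$ satisfies \eqref{SC1} with the same $b$, and $-v\le0$ on $\pl\gO$. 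As $\gO\in\mathcal{C}_R$ and $bR<k$, the second part of Proposition~\ref{upbound} yields $-v\le C\,d$ on $\ol\gO$, i.e.\ $|v|\le C\,d$. For $u$ one needs a Hopf-type lower bound $u\le-c\,d$ near $\pl\gO$; combined with the previous estimate this gives $|v|/|u|\le C/c$, hence $\gamma<+\infty$, and the contradiction above then delivers $\mu_k^-\le\overline{\mu}_k^-$, i.e.\ $\mu_k^-=\overline{\mu}_k^-$; with Part (a) the minimum principle holds iff $\mu<\mu_k^-$.

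\textbf{The main obstacle.} The genuinely delicate step is the last one: the Hopf-type bound $u\le-c\,d$ near $\pl\gO$ on a domain of class $\mathcal{C}_R$ which need \emph{not} be of class $C^2$ (the paper stresses this — e.g.\ the intersection of two balls of radius $R$). Proposition~\ref{Hopf} is then unavailable, and at a convex corner of $\gO$ there is no interior ball, so the usual exponential barrier construction breaks down. The estimate must instead be produced from the defining family $\{B_R(y):y\in Y\}$ alone, and I expect the crucial point to be that the hypothesis $bR<k$ is exactly what allows a suitable concave-corrected radial barrier centred at the ball (or balls) active at the given boundary point to be a classical strict supersolution of $\Pmk(D^2\cdot)+H(x,\grad\cdot)\le0$ vanishing to the first order at that point, so that the comparison principle can be applied against $u$. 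This boundary barrier is the only ingredient beyond Theorem~\ref{MaxPrinc}, Proposition~\ref{upbound} and the elementary monotonicity lemma above.
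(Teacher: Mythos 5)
Your proposal follows the same skeleton as the paper's proof: the core of both is the doubling-of-variables scheme of Theorem \ref{MaxPrinc} run with a negative subsolution $u$ at a level $\rho<\mu_k^-$ against a function $v$ violating the minimum principle, made to work although $u$ may vanish on $\partial\Omega$ by means of the two boundary estimates $-v\leq C_2\,d$ and $u\leq -C_1\,d$, and the identity $\overline{\mu}_k^-=\mu_k^-$ is then extracted from Proposition \ref{supersol}. Your repackaging (the truncation/monotonicity lemma showing failure of the minimum principle for all $\mu\geq\overline{\mu}_k^-$, then a contradiction from $\overline{\mu}_k^-<\mu<\rho<\mu_k^-$) is correct and essentially equivalent to the paper's ``Claim $+$ Proposition \ref{supersol}'' organization; the monotonicity lemma, the use of Corollary \ref{corbound} to guarantee $\rho>0$, the treatment of $-v$ via the second part of Proposition \ref{upbound}, and the computation $\mu\geq\rho t/\gamma$ are all sound.

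The genuine gap is the one you flag yourself: the Hopf-type bound $u\leq -c\,d$ near $\partial\Omega$ is never proved, only conjectured to follow from some barrier built from the balls $B_R(y)$. Without it, $\gamma=\sup_\Omega v/u$ is not known to be finite and the whole contradiction collapses, so as written your argument is incomplete precisely at its decisive step. The paper does not construct any new corner barrier here: it observes that, since $\rho\geq 0$ and $u<0$, $u$ is a subsolution of $\Pmk(D^2u)+H(x,\grad u)\geq 0$, and then directly invokes Proposition \ref{Hopf} (for $u$) together with Proposition \ref{upbound} applied to $-v$ with $m=\mu\|v^-\|_\infty$, obtaining $u\leq-C_1d$ and $-v\leq C_2 d$ and then proceeding exactly as in Theorem \ref{MaxPrinc}. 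Your observation that Proposition \ref{Hopf} is stated for bounded $C^2$-domains, whereas a domain of class $\mathcal{C}_R$ may have corners, is a fair reading of a point the paper passes over quickly rather than a defect of the strategy; but since you neither supply the barrier at such boundary points nor give any alternative derivation of $u\leq -c\,d$, this estimate must be established for your proof to close, whereas the paper's proof is complete modulo its appeal to Proposition \ref{Hopf}.
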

In order to prove Theorem \ref{thm3} we shall need the following proposition which proves that if $\Omega$ is a hula hoop domain, the bound $\overline\mu_k^-$ of Theorem~\ref{MaxPrinc} is sharp. We indeed exhibit a supersolution $v$ at level $\overline\mu_k^-$ which will invalidate the minimum principle. 
The result has been inspired by \cite[Proposition 3.2]{BCPR}.

\begin{proposition}\label{supersol}  
Assume \eqref{SC2}-\eqref{SC3}. Then $\overline{\mu}_k^-$ is 
finite and, if $\Omega\in\mathcal C_R$ and $bR<k$, there exists a nonpositive supersolution $v\not\equiv0$ of
\begin{equation*}
\left\{\begin{array}{lc}
\Pmk(D^2v)+H(x,\grad v)+\overline\mu_k^- v=0 & \text{in }\ \Omega\\
v=0 & \text{on }\ \partial \Omega.
\end{array}
\right.
\end{equation*}
\end{proposition}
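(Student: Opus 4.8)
The plan is to follow the Berestycki–Capuzzo Dolcetta–Porretta–Rossi strategy adapted to our degenerate setting. First I would establish that $\overline{\mu}_k^-$ is finite: this is immediate from Corollary~\ref{corbound}, since any $\Omega\in\mathcal C_R$ is bounded and contains some ball $B_{R_1}$, so \eqref{bounds1-eigenvalue} gives a finite upper bound. Next, for each $\mu<\overline{\mu}_k^-$ I would invoke the definition of $\overline{\mu}_k^-$ to produce a negative subsolution $w_\mu<0$ in $\overline\Omega$ of \eqref{eqev} at level $\mu$, and then solve (via Perron's method and the comparison principle, using the barrier bound \eqref{barrier1} from Proposition~\ref{upbound} — which applies precisely because $\Omega\in\mathcal C_R$ and $bR<k$ — to control the boundary behaviour) the Dirichlet problem $\Pmk(D^2 v_\mu)+H(x,\grad v_\mu)+\mu v_\mu = -1$ in $\Omega$, $v_\mu=0$ on $\partial\Omega$, obtaining a solution $v_\mu\le 0$. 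By Hopf for subsolutions (Proposition~\ref{Hopf}, applied to $-v_\mu$... more precisely, one wants $v_\mu\le -c\,d(x)$ away from where the forcing could vanish) $v_\mu$ is strictly negative and bounded above by a negative multiple of the distance function near $\partial\Omega$.

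The heart of the argument is then a normalization-and-passage-to-the-limit as $\mu\uparrow\overline{\mu}_k^-$. I would set $\displaystyle m_\mu=\max_{\overline\Omega}\frac{v_\mu(x)}{-d(x)}$ (finite and positive by the barrier bounds, at least after replacing $d$ with an appropriate comparison function that stays positive in the interior) and define $\tilde v_\mu = v_\mu/m_\mu$, so that $\tilde v_\mu\le 0$, $\tilde v_\mu\not\equiv 0$, and the normalization is uniform in $\mu$. The functions $\tilde v_\mu$ solve $\Pmk(D^2\tilde v_\mu)+H(x,\grad\tilde v_\mu)+\mu\tilde v_\mu = -1/m_\mu \le 0$. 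The key point is a uniform Lipschitz (or at least Hölder / local equicontinuity) estimate: here I would use the barrier techniques of Section~\ref{barriers}–\ref{compactness}, in particular the fact that $\Omega\in\mathcal C_R$ with $bR<k$ gives, via Proposition~\ref{upbound}, a uniform one-sided bound $\tilde v_\mu\ge -C\,$(const) and $\tilde v_\mu \le -c\,d$, together with interior estimates, to extract a subsequence converging locally uniformly in $\Omega$ (and with controlled boundary behaviour) to a limit $v\le 0$, $v\not\equiv 0$, $v=0$ on $\partial\Omega$. Stability of viscosity supersolutions under locally uniform convergence then yields $\Pmk(D^2 v)+H(x,\grad v)+\overline{\mu}_k^- v\le 0$ in $\Omega$, i.e. $v$ is the desired nonpositive supersolution.

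The main obstacle is the uniform a priori bound keeping the normalized family $\tilde v_\mu$ from degenerating to $0$ while also keeping it from blowing up — equivalently, showing that $m_\mu$ does not tend to $0$ or $\infty$ as $\mu\uparrow\overline{\mu}_k^-$ in a way that kills the limit. The lower bound $m_\mu\le$ (something) away from zero is delicate because the forcing $-1/m_\mu$ must not overwhelm the equation; here one balances the two barriers from Proposition~\ref{upbound} (the $d(x)$ upper barrier coming from the $\mathcal C_R$ geometry, and a lower barrier from $w_\mu$). The other delicate point is that the limiting $v$ must be genuinely lower semicontinuous up to $\overline\Omega$ with $v=0$ on $\partial\Omega$ — this is where the geometric hypothesis $\Omega\in\mathcal C_R$, $bR<k$ is essential, since flat boundary pieces would destroy the $d(x)$-barrier as shown by the half-space example at the end of Section~\ref{barriers}. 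Once these uniform estimates are in hand, the viscosity-stability conclusion is routine.
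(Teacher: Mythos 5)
Your outline starts correctly (finiteness from Corollary~\ref{corbound}) and shares the overall normalize-and-pass-to-the-limit structure with the paper, but there are two real problems. First, a sign issue: with forcing $-1$ on the right-hand side, the constant function $0$ is a \emph{sub}solution, so the Perron solution you would produce is nonnegative, not nonpositive; to obtain nonpositive solutions one must take forcing $+1$ (as the paper does in Proposition~\ref{boundedsub}, where the subsolution comes from the strictly negative function in the definition of $\overline{\mu}_k^-$ and the supersolution is $W\equiv 0$). More importantly, you declare that ``the key point is a uniform Lipschitz (or at least H\"older / local equicontinuity) estimate'' and then invoke interior estimates to extract a locally uniformly convergent subsequence. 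No such interior estimate exists in the paper for general $k$: Proposition~\ref{Lip-regularity} is a $k=1$ result (subsolutions of $\Pmo(D^2u)\geq -m$ are semiconvex, which fails for $k\geq 2$), and the paper itself lists interior regularity for $k\geq 2$ as an open problem. The paper sidesteps this entirely by using the \emph{lower half-relaxed limit} $\liminf_*$, which requires only a uniform $L^\infty$ bound, not equicontinuity; the one-sided barrier bound $z_n\geq -Md$ from Proposition~\ref{upbound} then recovers the boundary condition and the lower semicontinuity of the limit. Your argument as written does not close for $k\geq 2$.

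There is also a missing mechanism at the heart of the argument. You flag ``showing that $m_\mu$ does not tend to $0$ or $\infty$'' as the main obstacle, treating blow-up as something to be ruled out. In fact the paper \emph{wants} the normalization factor (there $\|w_n\|_\infty$) to blow up, and proves this by a short contradiction: if $\|w_n\|_\infty$ stayed bounded, then for $j$ large $v_j - 1/j$ would be a strictly negative subsolution of the equation at level $\overline{\mu}_k^- + 1/j > \overline{\mu}_k^-$, contradicting the definition of $\overline{\mu}_k^-$. The divergence is precisely what makes the forcing $1/\|w_n\|_\infty$ vanish in the limit, so that the relaxed limit $v$ (normalized to $\min_{\overline\Omega}v=-1$) solves the homogeneous eigenvalue inequality at level $\overline{\mu}_k^-$. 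Without this contradiction argument, and without a replacement for the equicontinuity you assume, your proof has a genuine gap.
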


For the proof of the proposition above, we need the following existence result that will be used also in the next section.

\begin{proposition} \label{boundedsub}   
Assume \erf{SC2} -\erf{SC3}. Let $\Omega\in\cC_R$ and $\mu<\ol\mu_k^-$, 
and assume that $bR<k$. Then, for $f$ bounded, there exist a subsolution $v$ 
and a supersolution $w$ of 
\beq\label{boundedsub0}
\cP_k^-(D^2u)+H(x,\grad u)+\mu u=f(x) \ \ \ \text{ in }\gO
\eeq
that satisfy 
$w\leq v$ in $\ol\Omega$ and $w=v=0$ on $\pl\gO$. 
\end{proposition}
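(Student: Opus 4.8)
The plan is to construct $v$ and $w$ separately as sub/supersolutions with zero boundary data, and then to check that they are ordered, $w\le v$. The natural candidates should be built out of the barrier-type functions already available in Section \ref{barriers}. For the supersolution $w$: since $\mu<\ol\mu_k^-$, the definition of $\ol\mu_k^-$ gives a $\rho\in(\mu,\ol\mu_k^-)$ and a function $u_0<0$ in $\ol\gO$ with $\Pmk(D^2u_0)+H(x,\grad u_0)+\rho u_0\ge 0$ in $\gO$; by compactness $u_0\le -c<0$ on $\ol\gO$. Take $w=A\,u_0+z$, where $z$ is a correction chosen to absorb the boundary data and the forcing term. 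Actually the cleanest route is: first use Proposition \ref{upbound} (applied to $-w$ or to the relevant truncated operator) to produce a classical strict supersolution $\psi\ge 0$ of $\Pmk(D^2\psi)+H(x,\grad\psi)\le -(\|f\|_\infty+1)$ that vanishes on $\pl\gO$ — here $\gO\in\cC_R$ and $bR<k$ are exactly the hypotheses of the second part of Proposition \ref{upbound}. Then $w:=-\psi + \mu^{-1}(\cdots)$ — more carefully, since $\mu\ge 0$ (the case $\mu<0$ being standard, one reduces as in the proof of Theorem \ref{MaxPrinc}), set $w:=-\psi$ and note $\mu w\le 0$, so by subadditivity \eqref{suba},
\[
\cP_k^-(D^2w)+H(x,\grad w)+\mu w\le \cP_k^-(D^2(-\psi))+H(x,\grad(-\psi))\le -\|f\|_\infty\le f(x)
\]
provided the sign of the $H$ term is handled — this requires some care because $H$ need not be odd, so one should instead build $\psi$ directly as a supersolution of the equation with $H$ in place (not $b|\cdot|$), which is possible by repeating the balls-intersection argument of Proposition \ref{upbound} with the genuine Hamiltonian and using \eqref{SC1}. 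The upshot is a supersolution $w\le 0$ of \eqref{boundedsub0} with $w=0$ on $\pl\gO$.

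For the subsolution $v$: I would exploit that $\mu<\ol\mu_k^-$ together with the weak minimum principle (Theorem \ref{MaxPrinc} i)), which holds for all $\mu<\ol\mu_k^-$ under \eqref{SC2}-\eqref{SC3}. The idea is to take $v$ to be a suitably large negative multiple of the eigenfunction-candidate $u_0<0$ above, plus a barrier term handling $f$: concretely $v:=Bu_0-\phi$ where $\phi\ge0$ vanishes on $\pl\gO$ and is a subsolution-correcting term. Since $u_0$ solves the $\rho$-equation with $\rho>\mu$ and $u_0<0$, we get $(\rho-\mu)u_0\le 0$, hence $u_0$ is already a subsolution of the $\mu$-equation with right-hand side $\le 0$; scaling and adding the barrier for $f$, and using \eqref{suba} once more, yields $\cP_k^-(D^2v)+H(x,\grad v)+\mu v\ge f(x)$ in $\gO$ with $v\le 0$, $v=0$ on $\pl\gO$, and $v\not\equiv 0$.

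The main obstacle — and the step deserving genuine care rather than routine computation — is the ordering $w\le v$ on $\ol\gO$. Both $w$ and $v$ vanish on $\pl\gO$, so one wants a comparison-type argument; but $v$ is only a subsolution and $w$ only a supersolution of the same degenerate equation, so the comparison principle does not directly apply between them. The way around this is to arrange the construction so that the ordering is built in: choose the negative multiple $B$ of $u_0$ defining $v$ large enough, and the supersolution $w$ to be dominated in absolute value, using that near $\pl\gO$ both are controlled linearly by $d(x)$ (Proposition \ref{upbound} gives $|w|\le C_1 d$, while $v\le -B c\, u_0/\|u_0\|\cdot(\text{something})$ — more precisely $-v\ge c' d^\gamma$ fails at $\gamma=1$, so one must instead compare $w$ with a fixed sub-barrier that $v$ dominates). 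Thus I expect the real work to be: (i) pin down a single intermediate function $h$ with $w\le h\le v$, with $h$ e.g. a negative constant multiple of the $\cC_R$-barrier $M(R^2-|x-y|^2)$ type function from Proposition \ref{upbound}, choosing the multiplicative constants so that $w\le h$ (by comparison, since $h$ is a classical strict supersolution majorising $w$'s boundary data) and $h\le v$ (again by comparison, as $h$ is a strict supersolution and $v$ a subsolution with $h=v=0$ on $\pl\gO$); then the chain $w\le h\le v$ closes the proof. The hypothesis $bR<k$ is what makes these $\cC_R$-barriers strict supersolutions, and it is used precisely here.
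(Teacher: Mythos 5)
Your plan has two genuine gaps, and they sit exactly where the real difficulty of the proposition lies. First, the subsolution. The function $u_0$ furnished by the definition of $\ol\mu_k^-$ is strictly negative on the \emph{closed} set $\ol\gO$, so any candidate of the form $Bu_0-\phi$ with $B>0$ and $\phi\ge 0$ is strictly negative on $\pl\gO$ and can never satisfy $v=0$ on $\pl\gO$. For $\mu>0$ possibly close to $\ol\mu_k^-$ there is no elementary barrier-type subsolution vanishing on the boundary: the term $\mu v$ works against you wherever $v$ is very negative, and this is precisely why the paper does not build $v$ and $w$ by hand. Its proof dilates and sup-convolves the strictly negative subsolution (to make it bounded below and a subsolution of perturbed problems with $H_n,\mu_n,f_n$ on all of $\gO$), runs a Perron construction squeezed between this subsolution and a nonnegative barrier supersolution, proves a uniform $L^\infty$ bound by a normalization and half-relaxed-limit argument whose contradiction comes from the minimum principle of Theorem~\ref{MaxPrinc} --- this is the essential quantitative use of $\mu<\ol\mu_k^-$, which your plan never actually exploits --- and controls the boundary values through the estimate $u\ge -Cd$ coming from Proposition~\ref{upbound}, which is the only place where $\gO\in\cC_R$ and $bR<k$ enter. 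The upper and lower half-relaxed limits $z^+$ and $z^-$ of the Perron functions are then, respectively, the subsolution and the supersolution of \eqref{boundedsub0}, and both vanish on $\pl\gO$.

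Second, your mechanism for the ordering $w\le v$ is flawed in itself. Comparison, when it holds, yields subsolution $\le$ supersolution, so the step ``$h\le v$ by comparison, $h$ a strict supersolution and $v$ a subsolution'' goes in the wrong direction; moreover no comparison principle is available for the full equation with $\mu$ positive and up to $\ol\mu_k^-$ --- its failure is the very reason an \emph{ordered} sub/supersolution pair must be produced at all. In the paper the ordering costs nothing: $w=z^-$ and $v=z^+$ are the lower and upper half-relaxed limits of one and the same sequence, so $w\le v$ is automatic. (There is also a sign slip in your supersolution step: replacing $\psi$ by $-\psi$ exchanges $\Pmk$ with $\Ppk$ and $H(x,\xi)$ with $-H(x,-\xi)$, and an infimum of ball barriers is the wrong envelope for the resulting inequality; this part is repairable, but repairing it does not address the two issues above.)
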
 
\begin{proof}[Proof of Proposition~\ref{boundedsub}.] Fix $\rho\in(\mu,\,\ol\mu_k^-)$, and, 
in view of the definition of $\ol\mu_k^-$, 
we may select a  real valued subsolution $\psi$ of 
\[
\cP_k^-(D^2\psi)+H(x,\grad \psi)+\rho \psi=0 \ \ \ \text{ in }\gO
\] 
such that $\psi<0$ in $\ol\gO$. We may assume by 
multiplying $\psi$ by a positive constant if necessary that 
$(\rho-\mu)\psi\leq -\|f\|_\infty$ in $\ol\gO$. 
It is now clear that $\psi$  is a subsolution 
of \erf{boundedsub0} or more precisely
\[
\cP_k^-(D^2\psi)+H(x,\grad \psi)+\mu \psi= \|f\|_\infty\ \ \ \text{ in }\gO.
\]
By translation, we may assume that $0\in\gO$. 
Since $\gO$ is a bounded, open, convex set, for any $\ep>0$, 
there is $\gd>0$ such that 
\[
(1+\ep)\gO\supset \gO^{\gd}:=\{x\in\R^N\,:\, \dist(x,\gO)<\gd\}. 
\]
We select such a $\gd=\gd(\ep)$ so that $0<\gd<\ep$. 

Define $\psi_\ep(x)=\psi((1+\ep)^{-1}x)$ for $x\in (1+\ep)\gO$ and note that $\psi_\ep$
is a subsolution of 
\[
(1+\ep)^{2}\cP_k^-(D^2\psi_\ep(x))+(1+\ep)H((1+\ep)^{-1}x,\grad \psi_\ep(x))
+\mu \psi_\ep(x)=\|f\|_\infty \ \ \ \text{ in }(1+\ep)\gO. 
\]
Thus, setting $H_\ep(x,\xi)=(1+\ep)^{-1}H((1+\ep)^{-1}x,\xi)$ and $\mu_\ep=
(1+\ep)^{-2}\mu$ , we see that $\psi_\ep$ is a subsolution of 
\[
\cP_k^-(D^2\psi_\ep)+H_\ep(x,\grad \psi_\ep)+\mu_\ep \psi_\ep=(1+\ep)^{-2}\|f\|_\infty \ \ \text{ in }\gO^\gd.
\]

For each $z\in B_\gd$, we define functions $\psi_\ep^z$ in $\ol\gO$ 
and $\widetilde H_{\ep}$ in $\ol\gO\tim\R^N$, respectively,  
by 
\[\psi_\ep^z(x)=\psi_\ep(x+z), \quad\text{ and }\quad \widetilde H_{\ep}(x,\xi)=\sup_{z\in B_\gd}H_\ep(x+z,\xi),
\]
and note that $\psi_\ep^z$ is a subsolution of 
\beq\label{boundedsub1}
\cP_k^-(D^2\psi_\ep^z)+\widetilde H_{\ep}(x,\grad \psi_\ep^z)+\mu_\ep \psi_\ep^z=(1+\ep)^{-2}\|f\|_\infty \ \ 
\text{ in }\gO.
\eeq

Set 
\[
W_\ep(x):=\max_{z\in \ol B_{\gd/2}}\psi_\ep^z(x)=\max_{y\in \ol B_{\gd/2}(x)}\psi_\ep(y)=
\max_{y\in \ol B_{\gd/2}(x)}\psi((1+\ep)^{-1}y) \ \ \text{ for }x\in\ol\gO,
\]
and observe that $W_\ep$ is upper semicontinuous in $\ol\gO$ and it is 
a subsolution of \erf{boundedsub1}, that $W_\ep\leq \max_{\ol\gO}\psi<0$ in $\ol\gO$,
and that the function 
\[
\widetilde H_\ep(x,\xi)=\sup_{z\in B_\gd}(1+\ep)^{-1}H((1+\ep)^{-1}(x+z),\xi)
\]  
satisfies \eqref{SC2} and \eqref{SC3}, with constant $(1+\ep)^{-1}b$ in place of $b$. 

Fix any $\ep>0$. We show that $W_\ep$ is bounded from below in $\ol\gO$. 
For this, we argue by contradiction and thus suppose that there 
is a sequence $(x_n)_{n\in\N}\subset\ol\gO$ such that $W_\ep(x_n)<-n$ 
for all $n\in\N$. We may assume up to extracting a subsequence that $(x_n)$ converges 
to some $x_0\in\ol\gO$. Moreover, we may assume that $x_n\in \ol B_{\gd/2}(x_0)$ 
for all $n$, which implies that, for any $n\in\N$, 
$x_0\in \ol B_{\gd/2}(x_n)$ and
\[
\psi_\ep(x_0)\leq W_\ep(x_n), 
\] 
which gives a lower bound of the sequence $(W_\ep(x_n))$, a contradiction.  

Next, we choose a sequence $(\ep_n)_{n\in\N}$ of positive numbers 
converging to zero, and,
for $n\in\N$, set $V_n=W_{\ep_n}$, $H_n=\widetilde H_{\ep_n}$, $\mu_n=\mu_{\ep_n}$,
and observe that, as $n\to +\infty$, $H_n \to H$ in $C(\ol\gO\tim\R^N)$, 
$\mu_n\to\mu$.  

Fix any $n\in\N$, and let $f_n(x)=(1+\ep_n)^{-2}f(x)$.
The standard construction of barrier functions for elliptic PDE yields 
a supersolution $W\in C(\ol\gO)$ of \erf{dir1} that satisfies $W=0$ on $\pl\gO$ 
and $W\geq 0$ in $\ol\gO$.  If $f\geq 0$  then just take $W\equiv 0$.
\beq\label{boundedsub2}
\cP_k^-(D^2u)+H_n(x,\grad u)+\mu_n u=f_n(x) \ \ \text{ in }\gO,
\eeq 
and $V_n$ is a  subsolution. We define the function $z_n$ in $\ol\gO$ by 
\[
z_n(x)=\inf\{u(x)\,:\, u
\text{ supersolution of \erf{boundedsub2}},\ V_n\leq u\leq W \text{ in }\ol\gO,\ u=0 
\ \text{ on }\,\pl\gO\}.
\]
By Perron procedure, the function $z_n$ is a ``viscosity solution'' of \erf{boundedsub2} in the sense 
that the upper semicontinuous envelope $(z_n)^*$ of $z_n$, given by
\[
(z_n)^*(x)=\inf_{r>0}\sup\{z_n(y)\,:\, y\in\ol\gO,\ |y-x|<r\},
\]
is a subsolution of \erf{boundedsub2} and the lower semicontinuous envelope 
$(z_n)_*$ of $z_n$, given by 
\[
(z_n)_*(x)=\sup_{r>0}\inf\{z_n(y)\,:\, y\in\ol\gO,\ |y-x|<r\},
\] 
is a supersolution of \erf{boundedsub2}.
It is clear that $\inf_{\ol\gO} V_n\leq (z_n)_*\leq (z_n)^*\leq  W$ in $\ol\gO$.  
If $u$ is a supersolution of \erf{boundedsub2} and if $V_n\leq u\leq  W$ in $\ol\gO$ and
$u=0$ on $\pl\gO$, then $u$ is supersolution of 
\[
\cP_k^-(D^2u)+H_n(x,\grad u)=f_n(x)-|\mu_n|\inf_{\ol\gO}V_n \ \ \text{ in }\gO.
\]
Proposition~\ref{upbound}, applied to $-u$, yields an inequality 
$u(x)\geq -C_nd(x)$  for all $x\in\ol\gO$ and some $C_n>0$, where 
$C_n$ is independent of the choice of $u$. This implies that $-C_n d\leq (z_n)_*
\leq (z_n)^*\leq  W$ in $\ol\gO$, which, in particular,  
ensures that $(z_n)_*=(z_n)^*=0$ on $\pl\gO$.

Now, we intend to send $n\to+\infty$. We claim that $\sup\|(z_n)_*\|_\infty<+\infty$. 
To check this, we argue by contradiction and suppose 
that $\sup\|(z_n)_*\|_\infty=+\infty$. 
We may assume up to a subsequence that $\lim_{n\to+\infty}\|(z_n)_*\|_\infty=+\infty$.
Set 
\[
Z_n(x)=\fr{(z_n)_*(x)}{\|(z_n)_*\|_\infty} \ \ \text{ for }x\in\ol\gO,\ n\in\N,
\]
and note that if we set
\[
M_0=\sup_{n\in\N}\fr{\|f\|_\infty}{\|(z_n)_*\|_\infty}+|\mu|, 
\]then
$Z_n$ is a supersolution of 
\[
\cP_k^-(D^2Z_n)+H_n(x,\grad Z_n)=M_0 \ \ \ \text{ in }\gO.
\]
Since $bk<R$, by applying Proposition~\ref{upbound} to $-Z_n$, we get, for some 
constant $M_1>0$, 
\beq\label{bc-0}
Z_n(x)\geq -M_1d(x) \ \ \ \text{ for all }x\in\ol\gO,\ n\in\N.  
\eeq

We take the lower relaxed limit of $(Z_n)_{n\in\N}$, that is, we set
\[
Z^-(x)=\liminf_{n\to+\infty}{\kern-2pt}_* Z_n(x)
=\sup_{r>0}\inf\{Z_n(y)\,:\, y\in\ol\gO,\,|y-x|<r,\, n>r^{-1}\}.
\]
It is a standard observation (see, e.g., \cite[Chapter 6]{CIL}) that $Z^-$ is lower semicontinuous in $\ol\gO$ 
and a supersolution of \erf{eqev}. It is clear that  
$Z^-\leq 0$ in $\ol\gO$ and $\min_{\ol\gO}Z^-=-1$. Moreover, it follows from
\erf{bc-0} that $Z^-=0$ on $\pl\gO$. According to Theorem~\ref{MaxPrinc}, the minimum principle holds for \erf{eqev}, but this contradicts 
that $\min_{\gO} Z^-=-1$.  Thus, we have $\sup_{n\in\N}\|(z_n)_*\|_\infty<+\infty$.

For the sequence $(z_n)$, which is uniformly bounded in $\ol\gO$, we consider 
the upper and lower relaxed limits $z^+$ and $z^-$ defined, respectively, by 
\[
z^+(x)=\limsup_{n\to\infty}{\kern-3pt}^* z_n(x)=\inf_{r>0}\sup\{z_n(y)\,:\,
|y-x|<r,\ n>r^{-1}\}, 
\] 
and
\[
z^-(x)=\liminf_{n\to\infty}{\kern-3pt}_* z_n(x)=\sup_{r>0}\inf\{z_n(y)\,:\,
|y-x|<r,\ n>r^{-1}\},  
\]
and observe that $-\sup_{n\in\N}\|(z_n)_*\|_{\infty}\leq z^-\leq  z^+\leq W$ in $\ol\gO$ 
and that $z^+$ and $z^-$ are a subsolution and a supersolution of \erf{boundedsub0}, respectively. 

Similarly to \erf{bc-0} for $Z_n$, since $(z_n)$ is uniformly bounded in $\ol\gO$, 
we deduce that there is a constant $M_2>0$ such that $(z_n)_*(x)\geq -M_2d(x)$ for all 
$x\in\ol\gO$ and $n\in\N$, which implies that $z^-=z^+=0$ on $\pl\gO$. 
The proof is now complete. 
\end{proof}

We remark that defining $W^\ep$ from $\psi^\ep$ in the proof above  
is  a sort of supconvolution (see \cite{KI} for the use of this supconvolution 
in a different situation).

\begin{proof}[Proof of Proposition~\ref{supersol}]
The finiteness of $\overline{\mu}_k^-$ is a consequence of Corollary~\ref{corbound} which gives a precise estimate. 

For $n\in\N$ let us consider the equation
\begin{equation}\label{eq1supersol}
\Pmk(D^2w)+H(x,\grad w)+\left(\overline\mu_k^--\frac1n\right)w=1\quad\mbox{in}\ \Omega.
\end{equation}

For each $n\in\N$, by  Proposition~\ref{boundedsub}, 
there are a subsolution $v_n$ and a supersolution $w_n$ of 
\beq\label{eqevn+1}
\Pmk(D^2u)+H(x,\grad u)+\left(\overline\mu_k^--\frac1{n}\right)u= 1
\quad\mbox{in}\ \Omega,
\eeq
satisfying $w_n\leq v_n\leq  0$ in $\ol\Omega$ 
and $w_n=v_n=0$ on $\pl\gO$.

We claim that $\sup_{n\in\N}\|w_n\|_\infty=+\infty$. 
Suppose by contradiction that $\sup_{n\in\N}\|w_n\|_\infty<+\infty$. 
We choose $j\in\N$ large enough so that 
\[
\fr 1j \left(2\sup_{n\in\N}\|w_n\|_\infty+\ol\mu_k^-+\fr 1j\right)\leq 1, 
\]
which implies that, since $w_j\leq v_j\leq 0$, 
\[
\fr{2}{j} v_j-\fr 1j\left(\ol\mu_k^-+\fr 1j\right)\geq -1 \ \ \text{ in }\ol\gO,
\]
and, hence, $v_j-1/j$ is a subsolution of 
\[
\cP_k^-(D^2u)+H(x,\grad u)+(\ol\mu_k^-+\fr 1j)u=0 \ \ \text{ in }\gO.
\]
Since $v_j-1/j<0$ in $\ol\gO$, 
this contradicts the definition of $\ol\mu_k^-$ and proves that 
$\sup_{n\in\N}\|w_n\|_\infty=+\infty$.

Up to extracting a subsequence, we may 
assume that 
\[
\lim_{n\to+\infty}\|w_n\|_\infty=+\infty.
\]
We introduce bounded functions 
$z_n=
\frac{w_n}{\left\|w_n\right\|_\infty}$, solutions of
$$
\Pmk(D^2z_n)+H(x,\grad z_n)+\left(\overline\mu_k^--\frac1n\right)z_n\leq\frac{1}{\left\|w_n\right\|_\infty}\quad\mbox{in}\ \Omega.
$$
We set
$$
v(x):=\liminf_{n\to+\infty}{\kern-3pt}_*\,z_n(x)
\ \ \text{ for }x\in\ol\gO. 
$$ 
This is the lower half relaxed limit of $(z_n)$ and is a supersolution of 
$\Pmk(D^2v)+H(x,\grad v)+\mu_k^- v\leq 0$ in $\Omega$. Moreover, it is clear that 
$v\leq 0$ in $\ol\gO$ and $\min_{\ol\gO} v=-1$. Using again the bound \eqref{barrier1}, 
we deduce that $v=0$ on $\pl\gO$, and the proof is complete. 
\end{proof}

\begin{proof}[Proof of Theorem~\ref{thm3}]  We begin by proving the following

{\bf Claim.} \emph{ For $\mu<\mu_k^-$  the operator $\Pmk(D^2\cdot)+H(x,\grad \cdot)+\mu\cdot$ satisfies the minimum principle.}

The proof proceeds like the proof of Theorem~\ref{MaxPrinc}, the only difference is that for $\rho\in(\mu,\mu_k^-)$, the $\limsup_{x\to z}u(x)$ could be zero for some $z\in\partial\Omega$. But using (\ref{SC2}) and the negativity of $u(x)$ we get
$$\Pmk(D^2u)+H(x,\grad u)\geq0\quad\text{in $\Omega$}$$
while
$$
\Ppk(D^2(-v))-H(x,-\grad(-v))\geq-\mu\left\|v^-\right\|_\infty\quad\text{in $\Omega$}.
$$
In view of Propositions \ref{Hopf}-\ref{upbound}, with $m=\mu\left\|v^-\right\|_\infty$, there exist two positive constants $C_1$ and $C_2$ such that
$$u(x)\leq-C_1d(x)\;\;\;\text{and}\;\;-v(x)\leq C_2d(x)\;\;\text{for any $x\in\Omega$}.$$
Hence 
$$
0<\frac{v(x_0)}{u(x_0)}\leq\gamma:=\sup_{x\in\Omega}\frac{v(x)}{u(x)}\leq\frac{C_2}{C_1}<+\infty.
$$
Now we can proceed exactly as in the proof of Theorem~\ref{MaxPrinc} in order to complete the proof of
 the claim.

To finish the proof  of Theorem~\ref{thm3} we observe that  Proposition~\ref{supersol} and the claim imply that $\overline{\mu}_k^-
\geq \mu_k^-$, but the reverse inequality is true by definition. 
\end{proof}

\begin{remark}{\normalfont
The bound (\ref{bounds1-eigenvalue}) clearly holds for $\mu_k^-$ under the assumptions of Theorem~\ref{thm3}.\\
Since $\mu^-_k\geq\overline\mu_k^-$, by definition, the inequality (\ref{bounds2-eigenvalue})  is a fortiori true for $\mu_k^-$.
Moreover (\ref{bounds2-eigenvalue}) is trivial for $bR_2\geq k$. We show in the Example  \ref{mu=0} that $\overline\mu_k^-$ can be zero.
}
\end{remark}

\begin{remark}\label{equality}
{\normalfont
The equality $\mu^-_k=\overline\mu_k^-$ holds true also in some non-convex case, for instance if $\Omega$ is a star-shaped domain, i.e. 
\begin{equation}\label{star-shaped}
\overline{\Omega-\left\{x_0\right\}}\subseteq(1+\varepsilon)(\Omega-\left\{x_0\right\})
\end{equation}
for some $x_0\in\Omega$ and all $\vep>0$. That was noticed e.g. in \cite{P} in the case of the  Pucci's extremal uniformly elliptic operators. Supposing $x_0=0$, for any $\varepsilon>0$ there exists, by definition, $w_\varepsilon<0$ in $\Omega$ satisfying
$$\Pmk(D^2w_\varepsilon)+H(\grad w_\varepsilon)+(\mu^-_k-\varepsilon)w_\varepsilon\geq0.$$
Hence $v_\varepsilon(x)=w_\varepsilon\left(\frac{x}{1+\varepsilon}\right)$ is negative in $\overline\Omega$ and if
\begin{equation}\label{Hrestriction}
H=H(\xi)=H^+(\xi),
\end{equation}
then
$$\Pmk(D^2v_\varepsilon)+H(\grad v_\varepsilon)+\frac{\mu^-_k-\varepsilon}{(1+\varepsilon)^2}v_\varepsilon\geq0\quad\;\text{in  $\Omega$}.$$ 
In this way $$\frac{\mu^-_k-\varepsilon}{(1+\varepsilon)^2}\leq\overline\mu_k^-$$
and $\mu^-_k=\overline\mu_k^-$ in the limit $\varepsilon\to0$. The same holds true for $\mu^+_k$ and $\overline\mu_k^+$ when $H=H(\xi)=-H^-(\xi)$.\\
Note that on one hand the class of the bounded domains satisfying \eqref{star-shaped} strictly includes $\mathcal C$, but on the other hand the equality $\mu^-_k=\overline\mu_k^-$ is here realised under the restriction \eqref{Hrestriction}, while in Theorem \ref{thm3} the Hamiltonian $H$ is allowed to be negative and dependent on the $x$-variable.}
\end{remark}

\subsection{Some unusual phenomena.}\label{unusual}
It is well known (see e.g. \cite{BNV}) that in the uniformly elliptic case the principal eigenvalues tend to infinity when the measure of the domain tends to zero; the next example shows that this is not necessarily the case for $\Pmk$.

\begin{example}\label{mu=0} \rm We show  that in an annulus $\overline{\mu}_k^-=0$, even if 
the measure of the annulus tends to zero, as long as the diameter is sufficiently large.
For $k<N$, the radial function
$$v(x)=\sin|x|+\cos\varepsilon$$
is a supersolution of the problem
\begin{equation*}
\left\{\begin{array}{lc}
\Pmk(D^2v)-b|\grad v|=0 &\mbox{in}\ A_\varepsilon=B_{\frac32\pi+\varepsilon}\backslash\overline{B}_{\frac32\pi-\varepsilon}\\
v=0 &\mbox{on}\ \partial A_\varepsilon
\end{array}
\right.
\end{equation*}
where $b=\frac{k}{\frac32\pi}$ and $\varepsilon$ is small enough (see \cite{GV}). Since $v$ violates the minimum principle, being negative in the annulus $A_\varepsilon$, we deduce from i) of Theorem \ref{MaxPrinc} that
$$\overline \mu^-_k=0.$$
\end{example}

\medskip

In the next example we show how the definition of $\overline\mu^+_k$ is strongly unstable with respect to 
perturbations both of the operator and the domain.
\begin{example}
\rm
Let $\Omega=B_R$. For $k<N$ and $n\in\N$, the values  
$\overline \mu^+_k$ associated to the operators
$$\Pmk(\cdot)+\frac{k}{R+\frac1n}|\cdot|$$
blows-up to $+\infty$ in view of Proposition~\ref{infinito}, since in this case $\frac{k}{R+\frac1n} R<k$. Moreover 
$$\Pmk(\cdot)+\frac{k}{R+\frac1n}|\cdot|\longrightarrow\Pmk(\cdot)+\frac kR|\cdot|$$
as $n\to+\infty$ locally uniformly in $\Sn\times\Rn$. On the other hand, taking the function $w(x)=\left(R^2-|x|^2\right)^\gamma$ with  $\gamma>1$, it turns out that
\begin{equation*}
\begin{split}
\Pmk(D^2w)+\frac kR|\grad w|+\frac{2\gamma k}{R^2}w&=\left(R^2-|x|^2\right)^{\gamma-1}\left(-2\gamma k+2\frac kR\gamma|x|+\frac{2\gamma k}{R^2}(R^2-|x|^2)\right) \geq 0;
\end{split}
\end{equation*}
moreover $w=0$ on $\partial\Omega$, $w>0$ in $\Omega$ and so $\overline \mu^+_k\leq \frac{2\gamma k}{R^2}$ by ii) of Theorem \ref{MaxPrinc}.

Concerning the instability with respect to small perturbations of $\Omega$, we consider the sequence of expanding subdomains $\Omega_n=B_{R-\frac1n}$ and the operator $\Pmk(\cdot)+\frac kR|\cdot|$. As before, for any $\Omega_n$ one has $\overline \mu^+_k=+\infty$, while in $\overline \mu^+_k\leq1$ in the limit case $\Omega=\cup_{n\in\N}\Omega_n$.  

\end{example} 
 Notice that in \cite{BNV} the stability of the principal eigenvalue with respect to interior perturbations of the domain is proved by means of the Krylov-Safonov Harnack inequality. It is not surprising therefore to expect the failure of the Harnack inequality in our degenerate setting, which is indeed the case as can be seen in the following very simple example. The nonnegative function $u(x_1,\ldots,x_N)=x_N^2$ is clearly a solution of $\Pmk(D^2u)=0$ in $B_1$ for $k<N$, but $\displaystyle\sup_{B_1} u=1$ and $\displaystyle\inf_{B_1}u=0$.

Other examples of instability are provided in \cite{BCPR} for first order operators.

\section{Existence } \label{existence}

In this section we shall prove existence results for Dirichlet problems
\begin{equation}\label{dirk}
\left\{\begin{array}{lc}
\Pmk(D^2u)+H(x,\grad u)+\mu u=f(x) &\mbox{in}\ \Omega\\
u=0 &\mbox{on}\ \partial\Omega,
\end{array}
\right.
\end{equation}
with $\Omega$  in the class $\cC_R$.
We start with the case where $k$ is any number between 1 and $N$.

\begin{proposition}Assume \erf{SC2} -\erf{SC3}. Let $\Omega\in\cC_R$ and $\mu<\ol\mu_k^-$, 
and assume that $bR<k$. 
If $f$ is bounded and $H$ satisfies, for all $x\in\Omega$ and for all $\xi,\ \eta$ in $\R^N$,
\beq\label{HH} |H(x,\xi)-H(x,\eta)|\leq b|\xi-\eta|,\ \eeq 
then for all
$$\mu<{\mu}_{k,b}^-:=\sup\{\mu\in\R: \exists w<0 \   \mbox{ in }\Omega,\  \Pmk(D^2w)-b|\grad w|+ \mu w\geq 0\ \mbox{in}\ \Omega\},$$
there exists a unique solution of \eqref{dirk}.
\end{proposition}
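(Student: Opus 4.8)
The plan is to derive existence and uniqueness simultaneously from a single comparison principle that exploits the Lipschitz hypothesis \eqref{HH}, and then to produce the solution by Perron's method using barriers already at our disposal. First observe two preliminary facts. Since $H$ satisfies \eqref{SC1} with the same constant $b$ (by \eqref{SC2}), any negative subsolution of $\Pmk(D^2\cdot)-b|\grad\cdot|+\mu\cdot=0$ is also a negative subsolution of $\Pmk(D^2\cdot)+H(x,\grad\cdot)+\mu\cdot=0$, so $\mu_{k,b}^-\le\mu_k^-$. Moreover, applying Theorem~\ref{thm3} both to $H$ and to the Hamiltonian $-b|\cdot|$ (which trivially satisfies \eqref{SC2}--\eqref{SC3}, with the same $b$) gives, under the standing assumptions $\Omega\in\cC_R$ and $bR<k$, the identities $\mu_k^-=\overline\mu_k^-$ and $\mu_{k,b}^-=\overline\mu_{k,b}^-$.

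The heart of the argument is the following comparison principle: \emph{for $\mu<\mu_{k,b}^-$, if $u$ is a bounded subsolution and $v$ a bounded supersolution of \eqref{dirk} with $\limsup_{x\to\partial\Omega}u(x)\le\liminf_{x\to\partial\Omega}v(x)$, then $u\le v$ in $\Omega$}. I would prove it by showing that $w:=v-u$ is a lower semicontinuous supersolution of the extremal equation $\Pmk(D^2w)-b|\grad w|+\mu w=0$ in $\Omega$, and then invoking the minimum principle. The reduction is carried out exactly as in the proof of Theorem~\ref{MaxPrinc}: if $\varphi$ is a $C^2$ function touching $w$ from below at an interior point $z_0$, double variables and use \cite[Theorem~3.2]{CIL} to produce $X_\alpha,Y_\alpha\in\mathbb S^N$ with $(\cdot,X_\alpha)\in\overline J^{2,+}u(x_\alpha)$, $(\cdot,Y_\alpha)\in\overline J^{2,-}v(y_\alpha)$, $Y_\alpha-X_\alpha\ge D^2\varphi(x_\alpha)$, and the two gradients differing, in the limit, by $\grad\varphi(z_0)$; subtracting the two viscosity inequalities, using the superadditivity \eqref{suba} in the form $\Pmk(Y_\alpha)-\Pmk(X_\alpha)\ge\Pmk(Y_\alpha-X_\alpha)$, using \eqref{HH} for the first-order terms and \eqref{SC3} for the $x$-dependence of $H$, and letting $\alpha\to+\infty$, yields $\Pmk(D^2\varphi(z_0))-b|\grad\varphi(z_0)|+\mu w(z_0)\le0$ (the forcing term $f$ cancels). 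Since $\liminf_{x\to\partial\Omega}w(x)\ge0$ and the operator $\Pmk(D^2\cdot)-b|\grad\cdot|+\mu\cdot$ satisfies the minimum principle for $\mu<\overline\mu_{k,b}^-=\mu_{k,b}^-$ by Theorem~\ref{MaxPrinc}, it follows that $w\ge0$, i.e. $u\le v$.

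For existence I would use Perron's method. Because $\mu<\mu_{k,b}^-\le\mu_k^-=\overline\mu_k^-$, Proposition~\ref{boundedsub} supplies a subsolution $v_0\le0$ of \eqref{dirk} vanishing on $\partial\Omega$ (and bounded below by $-Cd$). A standard barrier construction --- as in the proof of Proposition~\ref{boundedsub}, using that every boundary point of an $\Omega\in\cC_R$ lies on some sphere $\partial B_R(y)$ with $\Omega\subset B_R(y)$, and taking $W\equiv0$ when $f\ge0$ --- supplies a supersolution $W\ge0$ of \eqref{dirk} vanishing on $\partial\Omega$. Then $v_0\le0\le W$, so the Perron envelope $u(x):=\sup\{\phi(x):\phi\text{ a subsolution of \eqref{dirk}},\,v_0\le\phi\le W\}$ is well defined; the Perron procedure shows $u^*$ is a subsolution and $u_*$ a supersolution of \eqref{dirk}, while $v_0\le u_*\le u^*\le W$ forces $u^*=u_*=0$ on $\partial\Omega$. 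The comparison principle applied to the pair $(u^*,u_*)$ gives $u^*\le u_*$, hence $u:=u^*=u_*\in C(\ol\Omega)$ is the desired solution of \eqref{dirk} with $u=0$ on $\partial\Omega$. Uniqueness is then immediate: two solutions are each both a subsolution and a supersolution of \eqref{dirk} with zero boundary trace, so the comparison principle applied twice gives equality.

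The main obstacle is the comparison principle, and within it the doubling-of-variables reduction of $v-u$ to a supersolution of the extremal equation for merely semicontinuous $u$ and $v$: one must control the matrix and gradient remainders so that they vanish in the limit and $f$ genuinely cancels. Once this is in place, the threshold $\mu_{k,b}^-$ is exactly the one dictated by the minimum principle of Theorem~\ref{MaxPrinc} for the Hamiltonian $-b|\cdot|$, and everything else is a routine assembly of Perron's method with the barriers of Section~\ref{barriers} and Proposition~\ref{boundedsub}.
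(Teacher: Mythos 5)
Your proposal is correct and built around the same central mechanism as the paper's proof: subtracting a subsolution from a supersolution and invoking \eqref{HH} to show the difference is a supersolution of the extremal equation $\Pmk(D^2\cdot)-b|\grad\cdot|+\mu\cdot=0$, to which the minimum principle applies for $\mu<\mu_{k,b}^-$. Where you diverge is in the assembly. The paper observes that the sub- and supersolution produced by Proposition~\ref{boundedsub} already come in the ``squeezing'' order $w\leq v$ (supersolution below subsolution, both vanishing on $\partial\Omega$), so the comparison step applied once to $u=w-v\leq 0$ forces $u\geq 0$, hence $w=v$, which is then automatically a solution; the entire argument is three lines, with the viscosity reduction delegated to \cite{GV}. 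You instead prove a generic comparison principle for arbitrary bounded sub/supersolutions (via doubling of variables and the superadditivity \eqref{suba}) and then run Perron's method between $v_0$ and a barrier $W$. This is sound — your semijet bookkeeping and the cancellation of $f$ are in the right spirit, and the Perron bump argument stays below $W$ for the reason you would expect — but it re-derives machinery that Proposition~\ref{boundedsub} already packages: had you applied your comparison directly to the pair $(v,w)$ from that proposition, the ordering $w\leq v$ together with $v\leq w$ from comparison gives $v=w$ at once, with no Perron step needed. On the other hand, your route has the virtue of making uniqueness explicit, whereas in the paper it is left as a by-product of the same reduction applied to two solutions.
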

\begin{proof} Let $v$ and $w$ be as in Proposition~\ref{boundedsub}. By \eqref{HH}, the nonpositive function $u=w-v$ is a supersolution of
$\Pmk(D^2u)-b|\grad u|+ \mu u= 0$ (see \cite{GV}). Using Theorem~\ref{thm3} i), we get that $u\geq 0$. Hence $v=w$ is the required solution.
\end{proof}

In the rest of the section we shall only consider the case $k=1$, in that case beside the existence below the generalized eigenvalue we can also prove existence of the eigenfunction. The proofs somehow follow the schemes of \cite{BD, BD2}.

\begin{theorem}\label{exi}
Let $\gO\in\cC_R$,  let $H$ satisfying \eqref{SC2}-\eqref{SC3} and let $f$ be a bounded continuous function in $\Omega$.  Assume $bR<1$. 
Then   
there exists a solution $u\in{\rm Lip}(\overline\Omega)$ of
\begin{equation}\label{dir1}
\left\{\begin{array}{lc}
\Pmo(D^2u)+H(x,\grad u)+\mu u=f(x) &\mbox{in}\ \Omega\\
u=0 &\mbox{on}\ \partial\Omega,
\end{array}
\right.
\end{equation}
in the following two cases:
\begin{itemize}
	\item[i)] for  $\mu< \mu_1^-$;
	\item[ii)]  for any $\mu$ if $f\leq0$.
\end{itemize}
\end{theorem}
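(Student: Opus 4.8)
The plan is to obtain the solution by the Perron method combined with the global Lipschitz estimate of Proposition~\ref{Lipschitz_regularity}, treating case ii) as a warm-up for case i). In both cases the crucial ingredients are: a pair of ordered barriers (sub/supersolution) vanishing on $\pl\gO$, a comparison principle up to the boundary, and an a priori bound on $\|u\|_\infty$ that allows one to pass to the limit. For case ii), when $f\leq 0$ and $\mu\geq 0$, the function $w\equiv 0$ is a supersolution of \erf{dir1}, while Proposition~\ref{boundedsub} (valid since $bR<1\leq 1$ forces $bR<k$ with $k=1$, at least for $\mu<\ol\mu_1^-$; for general $\mu\geq 0$ one uses instead $\rho$ slightly below $\ol\mu_1^-$ and rescales $f$ as in that proof, or more directly uses the classical barrier construction together with Proposition~\ref{upbound} applied to $-u$) produces a subsolution $v\leq 0$ with $v=0$ on $\pl\gO$. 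Perron's method between $v$ and $0$ then yields a viscosity solution $u$ with $v\leq u\leq 0$; the inequality $-u\leq C d$ coming from Proposition~\ref{upbound} applied to $-u$ forces $u=0$ on $\pl\gO$ in the strong (continuous) sense, and Proposition~\ref{Lip-regularity}~ii) (or Proposition~\ref{Lipschitz_regularity}) upgrades $u$ to ${\rm Lip}(\ol\gO)$.

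For case i), fix $\mu<\mu_1^-$. First I would reduce, using \eqref{SC2}, to $\mu\geq 0$; the case $\mu<0$ being standard. The key point is that by definition of $\mu_1^-$ there is $\rho\in(\mu,\mu_1^-)$ and $\psi<0$ in $\gO$ (only in $\gO$, not $\ol\gO$) with $\Pmo(D^2\psi)+H(x,\grad\psi)+\rho\psi\geq 0$; multiplying $\psi$ by a large positive constant makes $(\rho-\mu)\psi\leq -\|f\|_\infty$, so that $\psi$ is a subsolution of \erf{dir1}. For the supersolution side I would use $-\psi$-type reasoning on the dual operator $\Ppo$, or more simply build a supersolution $W\geq 0$ out of the distance-function barrier \erf{barrier1}: by Proposition~\ref{upbound} applied to the equation for $\Ppo$, the function $Cd(x)$ (with $C$ large) is a supersolution of \erf{dir1} once $\mu\geq 0$ and $f$ bounded, vanishing on $\pl\gO$. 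Then $\psi\leq W$ fails to be automatic, so instead I would invoke Proposition~\ref{boundedsub} directly when $\mu<\ol\mu_1^-$, and for $\ol\mu_1^-\leq \mu<\mu_1^-$ I would run Perron between a subsolution $v$ (the rescaled $\psi$, truncated below if necessary) and a supersolution $W=Cd$, obtaining a ``solution'' $z$ with $(z)_*\geq -C'd$ and $(z)^*\leq Cd$ by two applications of Proposition~\ref{upbound}; hence $z$ is continuous on $\pl\gO$ with boundary value $0$.

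The final and genuinely delicate step is to show $z$ is a \emph{bona fide} continuous viscosity solution and is Lipschitz, not merely a discontinuous Perron solution. Here I would argue as in Proposition~\ref{boundedsub}: $(z)^*$ is a bounded subsolution of \erf{dir1} vanishing on the boundary, so Proposition~\ref{Lip-regularity}~ii) applies and $(z)^*\in{\rm Lip}(\ol\gO)$; dually $(z)_*$ is a Lipschitz supersolution; then the \emph{Claim} inside the proof of Theorem~\ref{thm3} (the minimum principle for $\Pmo(D^2\cdot)+H(x,\grad\cdot)+\mu\cdot$ valid for $\mu<\mu_1^-$), applied to $w=(z)_*$ and $v=(z)^*$ (whose difference is a supersolution of the $b$-diminished operator by \eqref{HH}-type reasoning via \cite{GV}), forces $(z)^*\leq (z)_*$, whence $z$ is continuous, a solution, and Lipschitz. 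I expect the main obstacle to be precisely the book-keeping needed to produce the ordered pair of barriers when $\mu$ ranges all the way up to $\mu_1^-$ (which may exceed $\ol\mu_1^-$), and to guarantee the comparison/minimum-principle step applies up to the boundary — this is exactly where the hula hoop hypothesis $\gO\in\cC_R$ and $bR<1$ enter, through Proposition~\ref{upbound} and the global Lipschitz bound.
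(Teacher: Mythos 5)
Your proposal has the right ingredients (Perron between ordered barriers from Proposition~\ref{boundedsub}, boundary control from Proposition~\ref{upbound}, Lipschitz upgrade from Propositions~\ref{Lip-regularity}/\ref{Lipschitz_regularity}), but there are two genuine problems and one careless error.

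The central gap is in your final step. You propose to show the Perron ``solution'' $z$ is continuous by comparing $(z)^*$ and $(z)_*$ via the Claim inside Theorem~\ref{thm3}, arguing that the difference is a supersolution of the $b$-diminished operator. That manoeuvre requires the Lipschitz structure condition \eqref{HH} on $H$, which is precisely the extra hypothesis the paper imposes in the preceding general-$k$ proposition but deliberately \emph{drops} in Theorem~\ref{exi} (only \eqref{SC2}--\eqref{SC3} are assumed, and these do not imply $|H(x,\xi)-H(x,\eta)|\leq b|\xi-\eta|$: a continuous positively 1-homogeneous $H(x,\cdot)$ can be bounded on the sphere without being Lipschitz). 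The paper's proof avoids this entirely: it defines $u$ as the Perron supremum of subsolutions trapped between the barriers, observes that $u=u^*$ is a subsolution satisfying $u\geq -Cd$ so that Proposition~\ref{Lip-regularity}~ii) makes $u$ Lipschitz in $\ol\gO$, and then, since a Lipschitz function equals its lower semicontinuous envelope, $u=u_*$ is automatically a supersolution. No comparison of $(z)^*$ against $(z)_*$ is needed, and hence no \eqref{HH}. Your route would not close under the stated hypotheses.

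Two further points. First, the worry you flag about the range $\ol\mu_1^-\leq\mu<\mu_1^-$ is vacuous: Theorem~\ref{thm3} (which holds under exactly the hypotheses $\gO\in\cC_R$, $bR<1$, \eqref{SC2}--\eqref{SC3}) gives $\mu_1^-=\ol\mu_1^-$, so $\mu<\mu_1^-$ already puts you in the regime where Proposition~\ref{boundedsub} applies directly; the paper invokes Theorem~\ref{thm3} for precisely this purpose. Second, in case ii) you have a sign error: with $f\leq 0$ the constant $0$ is a \emph{subsolution} of \eqref{dir1}, not a supersolution, and this is the key observation — it replaces the subsolution $v$ and removes any restriction on $\mu$, with the nonnegative barrier $W$ (constructed as in Proposition~\ref{boundedsub}) serving as the supersolution; the resulting Perron solution is then trapped in $[0,W]$ and is nonnegative.
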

\def\bcases{\begin{cases}}
\def\ecases{\end{cases}}
The proof uses the construction in Proposition~\ref{boundedsub} and the global Lipschitz regularity obtained in Proposition~\ref{Lip-regularity} for subsolutions.
\begin{proof}[Proof of Theorem \ref{exi}] $\,$ 
We first consider the case where $\mu<\mu_1^-$. By Theorem~\ref{thm3} and Proposition~\ref{boundedsub}, we see that there are a subsolution $v$ and a supersolution $w$ of \eqref{dir1}
such that $w\leq v$ in $\ol\gO$. 
By estimate \erf{barrier1}, there is a constant $C>0$ such that $-Cd\leq w\leq v$ 
in $\ol\gO$. 

As in Proposition~\ref{boundedsub}, the standard construction of barrier functions for elliptic PDE yields 
a supersolution $W\in C(\ol\gO)$ of \erf{dir1} that satisfies $W=0$ on $\pl\gO$ 
and $W\geq 0$ in $\ol\gO$. If $f\geq 0$  then just take $W\equiv 0$.

We define function $u$ in $\ol\gO$ through the Perron procedure, that is, 
\[
u(x)=\sup\{z(x)\,:\, z\text{ subsolution of \erf{dir1}},\ v\leq u\leq W \ \text{ in }\ol\gO\}.
\]
The upper semicontinuous envelope $u^*$ is a subsolution of \erf{dir1} and satisfies 
$v\leq u^*\leq W$ in $\ol\gO$, which implies that $u=u^*$ in $\ol\gO$ and, hence, 
$u$ is upper semicontinuous in $\ol\gO$. Since $u\geq- Cd$ and $u=0$ on $\pl\gO$, 
by Proposition~\ref{Lip-regularity}, we see that $u$ is Lipschitz continuous in $\ol\gO$. 
Hence $u=u_*$  and it is a supersolution of \erf{dir1}, we conclude the proof of i).

For the proof of ii), we can treat the case where $f\leq 0$ in $\ol\gO$ in the same way. The only difference is 
that, when $f\leq 0$, the constant function $0$ is a subsolution of \erf{dir1} 
and replaces $v$ in the argument above.  Thus, the bound on $\mu$ is not needed and the resulting solution $u$ is nonnegative. 
\end{proof}

\begin{theorem}\label{eigenfunction}
 Let $\Omega$, $H$ and $b$ as in the Theorem~\ref{exi}. Then there exists a negative function $\psi_1\in{\rm Lip}(\overline\Omega)$ such that
\begin{equation}\label{1dir}
\left\{\begin{array}{lc}
\Pmo(D^2\psi_1)+H(x,\grad\psi_1)+\mu_1^-\psi_1=0 &\mbox{in}\ \Omega\\
\psi_1=0 &\mbox{on}\ \partial\Omega.
\end{array}
\right.
\end{equation}
\end{theorem}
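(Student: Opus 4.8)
The plan is to obtain $\psi_1$ as a limit of rescaled solutions of the Dirichlet problem \eqref{dir1} at levels $\mu\uparrow\mu_1^-$, following the classical Berestycki--Nirenberg--Varadhan scheme as adapted to the viscosity setting in \cite{BD, BD2}. First I would fix a bounded continuous $f<0$ in $\Omega$ (say $f\equiv-1$) and, for each $\mu<\mu_1^-$, invoke Theorem~\ref{exi} i) to produce a solution $u_\mu\in{\rm Lip}(\ol\gO)$ of \eqref{dir1}. Since $f\le0$, the solution furnished there is nonnegative; but by the strong maximum principle (Remark~\ref{strong_MP}) and Hopf's lemma for subsolutions (Proposition~\ref{Hopf}), applied to $-u_\mu$, which is a nonpositive subsolution of $\Pmo(D^2\cdot)+H(x,\grad\cdot)+\mu\cdot\ge 0$, one deduces $u_\mu>0$ in $\Omega$ with $u_\mu\ge c_\mu d$ for some $c_\mu>0$ (equivalently $-u_\mu$ is a strict negative subsolution). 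Set $\psi_\mu:=-u_\mu/\|u_\mu\|_\infty$; this is a negative solution, normalized so that $\|\psi_\mu\|_\infty=1$.

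The heart of the argument is to show $\|u_\mu\|_\infty\to+\infty$ as $\mu\uparrow\mu_1^-$. I would argue by contradiction: if $\|u_\mu\|_\infty$ stays bounded along a sequence $\mu_n\uparrow\mu_1^-$, then by the global Lipschitz estimate of Proposition~\ref{Lipschitz_regularity} (the constant there depends only on $\Omega$, $b$ and $\|u_{\mu_n}\|_\infty$, $\|f\|_\infty$, all bounded) the $u_{\mu_n}$ are equi-Lipschitz; Ascoli gives a uniform limit $u_*\in{\rm Lip}(\ol\gO)$ which, by stability of viscosity solutions, solves $\Pmo(D^2u_*)+H(x,\grad u_*)+\mu_1^- u_*=f$ in $\Omega$ with $u_*=0$ on $\pl\gO$ and $u_*\ge0$. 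If $u_*\equiv 0$ then $f\equiv 0$ along the limit, contradicting $f\equiv-1$; if $u_*\not\equiv0$ then $-u_*$ is a negative strict subsolution of $\Pmo(D^2\cdot)+H(x,\grad\cdot)+\mu_1^-\cdot\ge 0$ (using $f\le0$ and the strong maximum principle so that $u_*>0$ in $\Omega$), which forces $\mu_1^-\le\mu_1^-$ to be strictly exceeded — more precisely, perturbing $\mu_1^-$ slightly upward still leaves $-u_*$ a negative subsolution, contradicting the definition of $\mu_1^-$. Hence $\|u_{\mu_n}\|_\infty\to+\infty$.

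Now pass to the limit in the normalized functions. The $\psi_{\mu_n}$ satisfy $\Pmo(D^2\psi_{\mu_n})+H(x,\grad\psi_{\mu_n})+\mu_n\psi_{\mu_n}=-f/\|u_{\mu_n}\|_\infty\to 0$, they are negative, $\|\psi_{\mu_n}\|_\infty=1$, and they obey the upper bound $-\psi_{\mu_n}\le C d$ coming from estimate \eqref{barrier1} in Proposition~\ref{upbound} applied to $-\psi_{\mu_n}$ (with $m\to0$, so $C$ uniform). By Proposition~\ref{Lip-regularity} ii), the $\psi_{\mu_n}$ are equi-Lipschitz on $\ol\gO$, so a subsequence converges uniformly to $\psi_1\in{\rm Lip}(\ol\gO)$ solving \eqref{1dir} with $\psi_1\le0$, $\psi_1=0$ on $\pl\gO$, and $\|\psi_1\|_\infty=1$ — in particular $\psi_1\not\equiv0$. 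Finally, the strong maximum principle (Remark~\ref{strong_MP}) applied to $\psi_1$, a nonpositive supersolution, yields $\psi_1<0$ in $\Omega$, completing the proof.

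The main obstacle I anticipate is the blow-up step: one must carefully set up the contradiction so that the limiting strict subsolution at level $\mu_1^-$ genuinely violates the definition of $\mu_1^-$ — this requires the strict negativity $f<0$ (or at least a careful use of the strong maximum principle and Hopf's lemma to upgrade $u_*\ge0$ to a strict negative subsolution that survives a small increase of the spectral parameter). The uniform boundedness of the Lipschitz constants along the sequence $\mu_n$, and of the barrier constant $C$ in \eqref{barrier1} as $m\to0$, should be read off directly from Propositions~\ref{Lipschitz_regularity}, \ref{Lip-regularity} and \ref{upbound}, since the constants there are explicit in the data.
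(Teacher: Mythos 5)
Your scheme is the right one at a structural level (fix a forcing term, solve \eqref{dir1} at levels $\mu_n\nearrow\mu_1^-$, show the $L^\infty$-norms blow up, normalize, pass to the limit via the global Lipschitz estimates, and finish with the strong maximum principle), but your choice of sign on $f$ introduces a genuine error. Taking $f\equiv-1$ gives, by Theorem~\ref{exi}~ii), nonnegative solutions $u_\mu$. You then claim that $-u_\mu$ is a nonpositive subsolution of $\Pmo(D^2\cdot)+H(x,\grad\cdot)+\mu\cdot\geq0$, apply Hopf's lemma for subsolutions and the strong maximum principle, and use $-u_*$ as a negative subsolution to contradict the definition of $\mu_1^-$. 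None of this is legitimate: $\Pmo$ is not odd, so $\Pmo(D^2(-u_\mu))=-\Ppo(D^2u_\mu)$, and $-u_\mu$ is a subsolution of a $\Ppo$-type equation with the Hamiltonian $\tilde H(x,\xi)=-H(x,-\xi)$, not of the $\Pmo$-equation. Equivalently, $u_\mu\geq0$ is a \emph{supersolution} of $\Pmo(D^2\cdot)+H+\mu\cdot=0$, and the paper is explicit (Remark~\ref{remHopf} and the function $w(x)=(R^2-|x|^2)^\gamma$ extended by zero) that neither Hopf's lemma nor the strong minimum principle holds for supersolutions of $\Pmk$. So the step ``one deduces $u_\mu>0$ with $u_\mu\geq c_\mu d$'' fails, and so does the blow-up contradiction, because $-u_*$ is not a negative $\Pmo$-subsolution and therefore does not compete in the supremum defining $\mu_1^-$.

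The fix is exactly what the paper does: take $f\equiv+1$. Then the Perron construction in Theorem~\ref{exi} (using $W\equiv0$ as a supersolution) gives \emph{nonpositive} $u_n$ solving $\Pmo(D^2u_n)+H(x,\grad u_n)+\mu_n u_n=1$, so $u_n$ is already a nonpositive subsolution of the $\Pmo$-equation with no sign flip needed. In the blow-up contradiction, strict negativity of the limiting $u$ is obtained by the elementary observation that if $u(x_0)=0$ for some interior $x_0$, the constant test function $\varphi\equiv0$ touching from above gives $0\geq1$; then $u<0$ in $\Omega$ with $\|u\|_\infty<\infty$ implies $u$ is a negative subsolution at level $\mu_1^-+\varepsilon$ for $\varepsilon$ small (since $1+\varepsilon u\geq0$), contradicting the maximality of $\mu_1^-$. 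After normalizing by $\|u_n\|_\infty$, the Lipschitz bounds and the strong maximum principle of Remark~\ref{strong_MP} — applicable because the normalized limit $\psi_1\leq0$ is a \emph{subsolution} — give $\psi_1<0$ in $\Omega$. Your remaining steps (equi-Lipschitz compactness via Propositions~\ref{Lipschitz_regularity} and \ref{upbound}, stability under uniform limits, $\|\psi_1\|_\infty=1$) then go through as you describe.
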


\begin{proof}
Let $\mu_n\nearrow\mu^-_1$ and use Theorem~\ref{exi} to build $u_n\in {\rm Lip}(\overline\Omega)$ a solution of 
\begin{equation}\label{dir5}
\left\{\begin{array}{lc}
\Pmo(D^2u_{n})+H(x,\grad u_{n})+\mu_n u_n=1 &\mbox{in}\ \Omega\\
u_{n}=0 &\mbox{on}\ \partial\Omega.
\end{array}
\right.
\end{equation}
Observe that $u_n$ are nonnegative because the forcing term being positive in Perron's construction  we can use zero as the supersolution that bounds from above.
 
We claim that $\lim_{n\to\infty}\left\|u_n\right\|_\infty=+\infty$. Assume by contradiction that $\sup_{n\in\N}\left\|u_n\right\|_\infty<+\infty$. By Proposition~\ref{Lipschitz_regularity} the sequence $(u_n)_{n\in \N}$ is bounded in ${\rm Lip}(\overline\Omega)$ and converges , up to some subsequence, to a nonpositive solution $u$ of
\begin{equation*}
\left\{\begin{array}{lc}
\Pmo(D^2u)+H(x,\grad u)+\mu_1^- u=1 &\mbox{in}\ \Omega\\
u=0 &\mbox{on}\ \partial\Omega.
\end{array}
\right.
\end{equation*}
The function $u$ is negative in $\Omega$, otherwise if $\max_{x\in\overline\Omega}u=u(x_0)=0$ and $x_0\in\Omega$, then $\varphi(x)=0$ should be a test function touching $u$ from above in $x_0$ and therefore $0\geq1$.\\
Hence, for small positive $\varepsilon$, we have
$$\Pmo(D^2u)+H(x,\grad u)+(\mu_1^-+\varepsilon) u\geq0\quad\;\text{in $\Omega$}$$
contradicting the maximality of $\mu_1^-$.

\medskip
For $n\in\N$ the functions $v_n=\frac{u_n}{\left\|u_n\right\|_\infty}$ satisfy
\begin{equation}\label{dir6}
\left\{\begin{array}{lc}
\Pmo(D^2v_{n})+H(x,\grad v_{n})+\mu_n v_n=\frac{1}{\left\|u_n\right\|_\infty} &\mbox{in}\ \Omega\\
v_{n}=0 &\mbox{on}\ \partial\Omega
\end{array}
\right.
\end{equation}
and are bounded in ${\rm Lip}(\overline\Omega)$, again by means of Proposition~\ref{Lipschitz_regularity}. Extracting a subsequence if necessary, $(v_n)_{n\in\N}$ converges uniformly to a nonpositive function $\psi_1$ such that $\left\|\psi_1\right\|_\infty=1$. Taking the limit as $n\to+\infty$ in (\ref{dir6}) we have  
\begin{equation*}
\left\{\begin{array}{lc}
\Pmo(D^2\psi_1)+H(x,\grad\psi_1)+\mu_1^-\psi_1=0 &\mbox{in}\ \Omega\\
\psi_1=0 &\mbox{on}\ \partial\Omega.
\end{array}
\right.
\end{equation*}
By the strong maximum principle (see Remark~\ref{strong_MP}), we conclude $\psi_1<0$ in $\Omega$ as we wanted to show.
\end{proof}

\medskip

We conclude by computing explicitly the principal eigenvalue and eigenfunction for $\Pmo$, with $H=0$, in the ball $B_R$. We first note that $\overline\mu^-_1=\mu_1^-$, as a consequence of Theorem \ref{thm3} or, equivalently, of Remark \ref{equality}.\\
The function
$$\psi_1(x)=-\cos\left(\frac{\pi}{2R}|x|\right)$$
is twice differentiable everywhere, negative in $B_R$ and zero on $\partial B_R$. The ordered eigenvalues of the Hessian matrix are
\begin{equation*}
\begin{split}
\lambda_1\left(D^2\psi_1(x)\right)&=\left(\frac{\pi}{2R}\right)^2\cos\left(\frac{\pi}{2R}|x|\right)\\ 
\lambda_2\left(D^2\psi_1(x)\right)&=\ldots=\lambda_N\left(D^2\psi_1(x)\right)=\left(\frac{\pi}{2R}\right)\frac{\sin\left(\frac{\pi}{2R}|x|\right)}{|x|},
\end{split}
\end{equation*}
if $x\neq0$ and
$$
\lambda_1\left(D^2\psi_1(0)\right)=\ldots=\lambda_N\left(D^2\psi_1(0)\right)=\left(\frac{\pi}{2R}\right)^2,
$$
so that 
$$
\Pmo\left(D^2\psi_1(x)\right)+\left(\frac{\pi}{2R}\right)^2\psi_1(x)=0\quad\;\text{in $\Omega$}.
$$
In particular $\psi_1$ is a negative subsolution of $\Pmo(D^2\cdot)+\left(\frac{\pi}{2R}\right)^2\cdot=0$, hence by definition of $\mu_1^-$ we have $\mu_1^-\geq\left(\frac{\pi}{2R}\right)^2$. On the other hand the function $\psi_1$ invalidates the minimum principle and we get also the reversed inequality $\mu_1^-\leq\left(\frac{\pi}{2R}\right)^2$ by means of Theorem \ref{thm3}. In this way
$$
\mu_1^-=\left(\frac{\pi}{2R}\right)^2
$$
and $\psi_1$ is a negative radial eigenfunction.\\
It is worth to point out that for the 1-homogeneous infinity Laplacian $\Delta_\infty u=\left\langle D^2u\frac{\grad u}{|\grad u|},\frac{\grad u}{|\grad u|}\right\rangle$, one has
$$\overline\mu_1^+=\left(\frac{\pi}{2R}\right)^2$$
with $\varphi_1(x)=\cos\left(\frac{\pi}{2R}|x|\right)$ positive eigenfunction (see \cite[Section 4]{J}). In our framework we have on the contrary $\overline\mu_1^+=+\infty$ in view of Proposition~\ref{infinito}.

\section{Strictly convex domains, a characterization.}\label{convex} In this section we give the proof of Proposition~\ref{hulahoop} which we like to refer to as Proposition hula hoop.

\def\stm{\setminus}
\def\baligned{\begin{aligned}}
\def\ealigned{\end{aligned}}

 \def\gD{\varDelta}
 \def\gk{\kappa}
\def\bproof{\begin{proof}}
\def\eproof{\end{proof}}
 
We begin with a technical lemma.
\begin{lemma} \label{HH1}Let $\gO$ be a non-empty bounded and open 
subset of $\R^N$, with $C^2$-boundary, and $p\in\pl\gO$. Let $\nu(x)$ denote 
the outward normal unit vector of $\gO$ at $x\in\pl\gO$. Assume that $N>2$, and let 
$H\subset\R^N$ be a $2$-dimensional plane
passing through $p$ which is not perpendicular to $\nu(p)$. Set $\gD=\gO\cap H$. 
Let $H$ have the Euclidean structure induced by $\R^N$.

\noindent \emph{i)} Then, $\gD$ is a non-empty bounded and open subset, 
with $C^2$-boundary, of the plane $H$.\\
\noindent \emph{ii)} Assume in addition that the principal curvatures, $\gk_1,\ldots,\gk_{N-1}$,  of $\pl\gO$ at $p$   
are positive.  

Then, the curvature of the planar curve $\pl_H\gD$ at $p$ is 
bounded from below by $\min_{1\leq i< N}\gk_i$, where $\pl_H A$ denotes the boundary 
of $A\subset H$, relative to $H$.    
\end{lemma}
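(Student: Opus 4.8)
The plan is to read the hypothesis ``$H$ is not perpendicular to $\nu(p)$'' as a transversality statement: if $V$ denotes the linear $2$-plane parallel to $H$, it says precisely that $V\not\subseteq\nu(p)^\perp=T_p\pl\gO$, equivalently $V+T_p\pl\gO=\Rn$, equivalently the orthogonal projection $\pi_V\nu(p)$ of $\nu(p)$ onto $V$ is nonzero. With this, part i) becomes a routine application of the implicit function theorem near $p$, while part ii) is the geometric core and I would deduce it from the elementary identity obtained by differentiating $\langle\gamma',\nu\circ\gamma\rangle\equiv 0$ along a curve $\gamma$ in the hypersurface $\pl\gO$, using crucially that $H$ is an \emph{affine} plane, hence flat in $\Rn$.

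For part i): openness of $\gD=\gO\cap H$ in $H$ and boundedness are immediate. For non-emptiness I would fix a $C^2$ defining function $\rho$ of $\gO$ near $p$, with $\gO=\{\rho<0\}$ locally and $\grad\rho(p)=c\,\nu(p)$ for some $c>0$; then for small $t>0$ the point $p-t\,\pi_V\nu(p)$ lies on $H$ and satisfies $\rho(p-t\,\pi_V\nu(p))=-tc\,|\pi_V\nu(p)|^2+o(t)<0$, so it belongs to $\gD$. For the $C^2$ regularity of $\pl_H\gD$ near $p$ I would restrict $\rho$ to $H$: the $H$-gradient of $\rho|_H$ at $p$ equals the orthogonal projection of $\grad\rho(p)=c\,\nu(p)$ onto $V$, namely $c\,\pi_V\nu(p)\neq 0$, so the $C^2$ implicit function theorem presents $\pl_H\gD=\{\rho|_H=0\}$ near $p$ as a $C^2$ arc. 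Away from $p$ one repeats this at each point of $\pl_H\gD\subseteq\pl\gO\cap H$; in the strictly convex setting relevant to Proposition~\ref{hulahoop} every such point is again a transverse intersection (a tangency of $H$ with $\pl\gO$ there would force $\gD$ to miss a neighbourhood of the point, contradicting that it lies in $\pl_H\gD$), so the global $C^2$ statement follows.

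For part ii), the main step, I would parametrize the planar curve $\gamma:=\pl_H\gD$ by arclength with $\gamma(0)=p$. Since $\gD$ is open in $H$, $\gamma\subseteq\ol\gO\cap H$ and $\gamma\cap\gO=\emptyset$, so near $p$ the curve $\gamma$ runs inside the hypersurface $\pl\gO$; in particular $\gamma'(0)\in T_p\pl\gO$ is a unit vector. Two elementary facts then combine. First, because $\gamma$ stays in the affine plane $H$, its acceleration $\gamma''(0)$ lies in $V$, so the curvature of $\gamma$ \emph{as a curve in $H$} is simply $|\gamma''(0)|$, the same quantity as its curvature as a curve in $\Rn$; here the flatness of $H$ is essential, since if $H$ were a curved surface instead then $\gamma''(0)$ could acquire a component normal to the surface and one would only control the smaller geodesic curvature, the wrong inequality. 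Second, differentiating $\langle\gamma'(s),\nu(\gamma(s))\rangle\equiv 0$ at $s=0$ yields $\langle\gamma''(0),\nu(p)\rangle=-\langle d\nu_p\,\gamma'(0),\gamma'(0)\rangle$, and the right-hand side is, up to one overall sign, the value at the unit vector $\gamma'(0)$ of the second fundamental form of $\pl\gO$ at $p$, whose eigenvalues are (up to that sign) the principal curvatures $\gk_1,\dots,\gk_{N-1}$, all $\ge\min_{1\le i<N}\gk_i>0$; hence $|\langle\gamma''(0),\nu(p)\rangle|\ge\min_{1\le i<N}\gk_i$. Therefore
\[
(\text{curvature of }\pl_H\gD\text{ at }p)=|\gamma''(0)|\ \ge\ |\langle\gamma''(0),\nu(p)\rangle|\ \ge\ \min_{1\le i<N}\gk_i,
\]
which is the assertion.

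The main obstacle is not a hard estimate but two points requiring care. One is bookkeeping with sign conventions for $d\nu$ and the second fundamental form, so that ``$\gk_i>0$'' is genuinely the convexity convention used elsewhere in the paper; working throughout with the absolute value $|\langle\gamma''(0),\nu(p)\rangle|$ sidesteps this, but it must be stated cleanly (a quick check on a round ball fixes the normalisation). The more substantial point is ensuring that the transversality hypothesis really makes $\pl_H\gD$ a $C^2$ curve on which $\gamma''(0)$ is meaningful, and --- if one insists on part i) globally for a merely $C^2$, non-convex $\gO$ --- handling boundary points of $\gD$ far from $p$, where transversality is not assumed and $\pl_H\gD$ could a priori be singular. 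In the application to Proposition~\ref{hulahoop} strict convexity of $\gO$ removes this difficulty, so I would either state part i) locally near $p$ or invoke convexity for the global version.
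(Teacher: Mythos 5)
Your proposal is correct, and for part ii) it follows a genuinely different route from the paper. For part i) the two arguments essentially coincide: non-emptiness and the local $C^2$ description of $\partial_H\Delta$ near $p$ both come from transversality at $p$ (your condition $\pi_V\nu(p)\neq 0$ is the paper's choice of $e_1$ with $\nu(p)\cdot e_1<0$) plus the implicit function theorem applied to a defining function restricted to $H$; for boundary points of $\Delta$ away from $p$ the paper obtains transversality from the concavity of the signed distance function, i.e.\ from convexity of $\Omega$ (a hypothesis not stated in the lemma but available in its only application), whereas you obtain it from strict convexity via the tangency-contradiction argument -- so you are, if anything, more explicit than the paper about this point. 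For part ii) the paper argues by an interior tangent ball comparison: fixing $R$ with $1/R<\min_{1\le i<N}\kappa_i$ and Taylor-expanding the graph representation of $\partial\Omega$ at $p$, it shows that near $p$ one has $\Omega\subseteq B_R(p-R\nu(p))$, hence the planar section is locally trapped in a disk of radius at most $R$ whose boundary passes through $p$, so the curvature of $\partial_H\Delta$ at $p$ is at least $1/R$, and then lets $R$ decrease to $1/\min\kappa_i$. Your computation -- differentiating $\langle\gamma',\nu\circ\gamma\rangle\equiv 0$, invoking the second fundamental form bound, and using the flatness of $H$ to identify the planar curvature with $|\gamma''(0)|$ -- is a valid alternative, and in fact slightly sharper: since $\gamma''(0)$ lies in the direction plane $V$ of $H$ and is orthogonal to $\gamma'(0)$, projecting onto the inward unit normal of $\Delta$ in $H$, which is $-\pi_V\nu(p)/|\pi_V\nu(p)|$, gives signed curvature equal to $\mathrm{II}_p(\gamma'(0),\gamma'(0))/|\pi_V\nu(p)|\ \ge\ \min_{1\le i<N}\kappa_i$, which settles the sign issue you flagged (no need to retreat to absolute values or to convexity of $\Delta$) and shows that obliquity of $H$ only increases the bound. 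What the paper's ball comparison buys is precisely the avoidance of sign and orientation conventions, curvature being read off from a set inclusion, at the price of the auxiliary radius $R$ and a limiting step; what your argument buys is a shorter, coordinate-free computation and the sharper constant.
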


In the above, the perpendicularity of $H$ and $\nu(p)$ may be expressed as the condition 
that $\nu(p)\cdot(q-p)=0$ for all $q\in H$. 

\bproof We first prove i). We choose two orthonormal vectors $e_1,\,e_2\in\R^N$ so that 
$H=\{p+x_1e_1+x_2e_2\,:\, x_1,\, x_2\in\R\}$. By the non-perpendicularity of $H$ and $\nu(p)$, 
we may assume that $\nu(p)\cdot e_1<0$. 

Since $\gO$ has $C^2$-boundary, if $\gd>0$ is small enough, then 
$p+\gd e_1\in\gO$ and  $p+\gd e_1$ is  an interior point of $\gD$, relative to $H$. 
Since $\gO$ is open, $\gD$ is open relative to $H$. Hence, $\gD$ is a non-empty 
open subset of $H$. It is clear that $\gD$ is convex since it is an intersection of 
two convex sets and also that $\gD$ is bounded. 

Now, we show that $\gD$ is a domain, with $C^2$-boundary, in $H$. 
It is obvious that $\pl_H\gD\subset H\cap\pl\gO$. Fix any $q\in H\cap\pl\gO$. 
We consider the function $\rho\in C(\R^N)$ given by 
\[
\rho(x)=
\bcases \dist(x,\pl\gO)&\text{ if }x\in\gO,\\[3pt] 
-\dist(x,\pl\gO)&\text{ if }x\in\R^N\setminus \gO.
\ecases
\]
This function $\rho$ is $C^2$ near the boundary $\pl\gO$ and $\grad \rho(x)=-\nu(x)$ 
for all $x\in\pl\gO$. 
Set $p_\gd=p+\gd e_1\in\gD$, note that $\rho(p_\gd)>0$, and choose $(a,b)\in\R^2$ so that $q=p_\gd+ae_1+be_2$.
By the concavity of $\rho$, we find that for any $t\in[0,\,1]$,
\[
\rho(p_\gd+t(ae_1+be_2))=\rho((1-t)p_\gd+tq)\geq (1-t)\rho(p_\gd)+t\rho(q)=(1-t)\rho(p_\gd),
\]
and, hence,
\[
\fr{d}{dt}\rho(p_\gd+t(ae_1+be_2))\Big|_{t=1}\leq-\rho(p_\gd)<0,
\]
which shows that
\[
0>\grad \rho(q)\cdot (ae_1+be_2)=-\nu(q)\cdot(ae_1+be_2).
\]
Noting that 
\[H\cap\pl\gO=\{p_\gd+x_1e_1+x_2e_2\,:\, (x_1,x_2)\in\R^2, \ \rho(p_\gd+x_1e_1+x_2e_2)=0\}
\] 
and applying the implicit function theorem to the function: $\R^2\ni(x_1,x_2)\mapsto 
\rho(p_\gd+x_1e_1+x_2e_2)$, we see that, in a neighborhood of $q$, 
$H\cap\pl\gO$ is a $C^2$-curve in $H$ and that $q\in\pl_H\gD$.  
Because of the arbitrariness of $q\in H\cap \pl\gO$, 
we find that $H\cap\pl\gO$ is a $C^2$-curve in $H$ and also that 
$H\cap\pl\gO\subset\pl_H\gD$. Thus, we conclude that 
$\pl_H\gD=H\cap\pl\gO$ and that $\gD$ has $C^2$-boundary in $H$.  

Next, we prove (ii). We may assume by translation and orthogonal transformation that $p=0$
and $\nu(p)=(0,\ldots,0,-1)$. We can choose a neighborhood $V\subset\R^N$ of $p=0$,
a neighborhood $U\subset \R^{N-1}$ of $0\in\R^{N-1}$ and a function $g\in C^2(U)$ 
such that for any $x=(x_1,\ldots,x_N)\in V$,
\[
x\in \gO \ \ 
\text{ if and only if } \ \ 
(x_1,\ldots,x_{N-1})\in U \ \text{ and }\ x_N>g(x_1,\ldots,x_{N-1}). 
\]
We have $g(0)=0$, $\grad g(0)=0$ and we 
may assume further that $D^2g(0)=\mathrm{diag}(\gk_1,\ldots,\gk_{N-1})$.
We choose $R>0$ so that $1/R<\min_{1\leq i<N}\gk_{i}$, and consider the open ball 
$B$ with center at $-R\nu(p)=(0,\ldots,0,R)$ and radius $R$. We may assume 
by replacing $U$ and $V$ by smaller ones (in the sense of inclusion), if necessary,
that for any $x\in V$,
\[
x\in B \ \ \text{ if and only if } \ \ 
(x_1,\ldots,x_{N-1})\in U \ \text{ and }\ x_N>f(x_1,\ldots,x_{N-1}),
\] 
where $f(x_1,\ldots,x_{N-1})=R-\sqrt{R^2-(x_1^2+\cdots+x_{N-1}^2)}$. 
Note that $\grad f(0)=0$ and $D^2f(0)=(1/R)I$, where $I$ denotes the identity matrix of order 
$n-1$.  By Taylor's theorem, we may assume again by replacing $U$ and $V$ by smaller 
ones, if necessary, that $f(y)<g(y)$ for all $y\in U\setminus\{0\}$. This yields
\[
V\cap \gO\subset V\cap B,
\]
which shows that
\[
V\cap \gD\subset V\cap B\cap H.
\] 
Thus, observing that $\pl B\cap H=\pl_H(B\cap H)$, which is a special case of the identity, 
$\pl\gO\cap H=\pl_H\gD$, with $B$ in place of $\gO$,
that $B\cap H$ is a non-empty, planar, open disk with radius smaller than or equal to $R$ 
and that $p=0\in \pl_H\gD\cap \pl_H (B\cap H)$, we  conclude that
the curvature of the planar curve $\pl_H\gD$ at $p$ is larger than or equal to $1/R$. 
This completes the proof.   
\eproof

\begin{lemma} \label{HH2} 
Let $\gO$ be a non-empty bounded and open 
subset, with $C^2$-boundary, of $\R^N$. Let $\gk>0$ is a lower bound of the principal curvatures of $\pl\gO$ at every point $x\in\pl\gO$. Set $R=1/\gk$. Then, for any $z\in\pl \gO$, we have
\beq\label{HH2-0}
\gO\subset B_{R}(z-R\nu(z)). 
\eeq  
\end{lemma}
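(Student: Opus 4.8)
The plan is to prove first the case $N=2$ by an argument with support functions, and then to reduce the case $N\ge 3$ to it by slicing $\gO$ with suitable $2$-planes and invoking Lemma~\ref{HH1}. Since $\gO$ is connected with $C^2$-boundary whose principal curvatures are all positive, it is strictly convex; in particular, for every $w\in\pl\gO$ the tangent hyperplane to $\pl\gO$ at $w$ meets $\ol\gO$ only at $w$, and each outward normal direction is realized at a unique point of $\pl\gO$.

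Consider first $N=2$. After a rigid motion we may assume $z=0$ and $\nu(z)=(0,-1)$, so that $c:=z-R\nu(z)=(0,R)$; here $\pl\gO$ is a simple closed $C^2$ curve of curvature $\ge\gk=1/R$. Let $s(\theta)$ be the support function of $\gO$; it is of class $C^2$, the radius of curvature of $\pl\gO$ at the boundary point with outward normal $(\cos\theta,\sin\theta)$ equals $s(\theta)+s''(\theta)$ and is $\le R$, and the normalization gives $s(-\pi/2)=s'(-\pi/2)=0$. The disc $\ol{B_R(c)}$ has support function $R(1+\sin\theta)$, so it suffices to prove $\phi\ge 0$ where $\phi(\theta):=R(1+\sin\theta)-s(\theta)$; indeed this yields $\ol\gO\subset\ol{B_R(c)}$ and hence, $\gO$ being open, $\gO\subset B_R(c)$. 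Now $\phi$ is $2\pi$-periodic and $C^2$, with $\phi(-\pi/2)=\phi'(-\pi/2)=0$ and $\phi''+\phi=R-(s+s'')\ge 0$. Set $a=-\pi/2$ and $u(\theta)=\phi'(\theta)\sin(\theta-a)-\phi(\theta)\cos(\theta-a)$; then $u(a)=0$ and $u'=(\phi''+\phi)\sin(\theta-a)$. On $(a,a+\pi)$ one has $\sin(\theta-a)>0$, hence $u'\ge 0$, hence $u\ge 0$, hence $\bigl(\phi/\sin(\theta-a)\bigr)'=u/\sin^2(\theta-a)\ge 0$; since $\phi(\theta)/\sin(\theta-a)\to\phi'(a)=0$ as $\theta\to a^+$, this forces $\phi\ge 0$ on $(a,a+\pi)$. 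On $(a-\pi,a)$ one has $\sin(\theta-a)<0$, so $u$ is nonincreasing with $u(a)=0$, whence $u\ge 0$ there; then $\phi/\sin(\theta-a)$ is nondecreasing with limit $0$ at $a^-$, so it is $\le 0$ to the left of $a$ and therefore $\phi\ge 0$ on $(a-\pi,a)$ as well. Thus $\phi\ge 0$ on the full period $(a-\pi,a+\pi)$, hence everywhere, and the case $N=2$ is proved.

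Now let $N\ge 3$, fix $z\in\pl\gO$ and $c=z-R\nu(z)$, and let $x\in\gO$. It suffices to treat those $x$ for which $x-z$ is not parallel to $\nu(z)$, since these are dense in $\gO$ while $\ol{B_R(c)}$ is closed and $\gO$ is open (so $\gO\subset\ol{B_R(c)}$, hence $\gO\subset B_R(c)$, will follow). For such an $x$, let $H$ be the $2$-plane through $z$ spanned by $\nu(z)$ and $x-z$; it passes through $z$, contains $c$ and $x$, and is not perpendicular to $\nu(z)$ because $\nu(z)\in H$. By part (i) of Lemma~\ref{HH1}, $\gD:=\gO\cap H$ is a non-empty bounded open subset of $H$, with $C^2$-boundary $\pl_H\gD=\pl\gO\cap H$; moreover $\nu(z)\in H$ is orthogonal to $\pl_H\gD$ at $z$ and points out of $\gD$, hence is the outward normal of $\gD$ at $z$ relative to $H$. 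For every $w\in\pl_H\gD$ the plane $H$ is not perpendicular to $\nu(w)$: otherwise $H$ would lie in the tangent hyperplane to $\pl\gO$ at $w$, which by strict convexity meets $\ol\gO$ only at $w$, contradicting that $\ol\gD\subset\ol\gO\cap H$ has non-empty interior relative to $H$ and contains $z\ne w$. Hence part (ii) of Lemma~\ref{HH1} applies at each $w\in\pl_H\gD$ and shows that the curvature of $\pl_H\gD$ at $w$ is at least $\min_i\gk_i(w)\ge\gk=1/R$. Therefore $\gD\subset H$ satisfies the hypotheses of the case $N=2$ with the same $R$, and that case gives $\ol\gD\subset\ol{B_R(z-R\nu(z))}\cap H=\ol{B_R(c)}\cap H$; in particular $x\in\ol{B_R(c)}$, which completes the argument.

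The main obstacle is the planar case: the inequality $\phi''+\phi\ge 0$ together with $\phi(a)=\phi'(a)=0$ controls $\phi$, via the classical Sturm comparison, only on intervals of length $<\pi$ about $a$, so one must exploit the inequality on \emph{both} sides of the contact point $z$ in order to cover a whole period of $\phi$. The other delicate point, in the reduction step, is checking that the slicing plane $H$ is nowhere perpendicular to the normal of $\pl\gO$ along $\pl\gO\cap H$ — precisely where strict convexity of $\gO$ is used — so that part (ii) of Lemma~\ref{HH1} is available at \emph{every} point of $\pl_H\gD$, not merely at $z$.
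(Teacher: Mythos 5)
Your proof is correct, but it takes a genuinely different route from the paper's in the planar step. The paper argues by contradiction in general dimension: it picks $q\in\gO\setminus B_M(p-M\nu(p))$ for some $M>R$, slices by the $2$-plane through $p$, $q$, and $p-M\nu(p)$, writes the boundary curve near $p$ as a graph $g$, and compares it with the lower semicircles $f_M$ and $f_Q$ (for an auxiliary $Q\in(R,M)$, needed to turn the non-strict curvature bound into a strict one near the contact point); the contradiction comes from the second-order conditions at an interior maximum of $h-f_M$ together with Lemma~\ref{HH1}(ii). You instead prove the $N=2$ case directly via the support function $s(\theta)$ and a Sturm-type comparison: with $\phi=R(1+\sin\theta)-s$, the curvature hypothesis gives $\phi''+\phi\ge 0$ with $\phi(a)=\phi'(a)=0$, and the Wronskian-type quantity $u=\phi'\sin(\theta-a)-\phi\cos(\theta-a)$ propagates the sign of $\phi$ over the full period $(a-\pi,a+\pi)$ by exploiting the inequality on both sides of $a$; you then reduce $N\ge 3$ to $N=2$ by slicing with the $2$-plane spanned by $\nu(z)$ and $x-z$, invoking Lemma~\ref{HH1} exactly as the paper does. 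Your route buys a cleaner, direct argument in the plane that avoids the $M>R$, $Q\in(R,M)$ interpolation; it also makes explicit the verification, needed to apply Lemma~\ref{HH1}(ii) at every point of $\pl_H\gD$ and not only at $z$, that the slicing plane is nowhere perpendicular to $\nu$ along $\pl_H\gD$ (via strict convexity), a point the paper's proof leaves tacit. The paper's route, on the other hand, is more elementary (no support functions) and works entirely within a single well-chosen slice. Both proofs implicitly use the classical fact that a bounded $C^2$-domain with everywhere positive principal curvatures is convex.
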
 

Clearly, \erf{HH2-0} shows that $\gO\in\cC_R$. Indeed
we have proved that
\[
\Omega\subset \bigcap_{z\in \partial\Omega}B_R(z-R\nu(z)).
\]

On the other hand, by the convexity of $\Omega$, we have 
\[
\Omega=\bigcap_{z\in\partial\Omega}\{x\in\Rn\,:\, (x-z)\cdot\nu(z)< 0\}.
\]
Observe that for any $z\in\partial\Omega$,
\[
B_R(z-R\nu(z))\subset \{x\in\Rn\,:\, (x-z)\cdot\nu(z)<0\}.
\]
Indeed, if $x\in B_R(z-R\nu(z))$, then 
\[
R^2>|x-z+R\nu(z)|^2=|x-z|^2+2R(x-z)\cdot\nu(z)+R^2> 2R(x-z)\cdot\nu(z)+R^2,
\]
and 
\[
(x-z)\cdot\nu(z)< 0.
\]
Thus, 
\[
\Omega\supset\bigcap_{z\in\partial\Omega}B_R(z-R\nu(z)).
\]
In conclusion the Lemma~\ref{HH2} above proves 
Proposition~\ref{hulahoop}.

\bproof It is enough to show that for any $M>R$ and $z\in\pl\gO$, 
\beq\label{1}
\gO\subset B_M(z-M\nu(z)). 
\eeq

We fix any $M>R$ and $p\in\pl\gO$. To show \eqref{1}, we suppose to the contrary that 
\erf{1} does not hold, and will get a contradiction. 

We can thus choose a point  
$q\in\gO\setminus B_{M}(p-M\nu(p))$. 

Select $m>0$ so small that $r:=p-m\nu(p)\in\gO\cap B_M(p-M\nu(p))$. 
Note that the line segment 
$[r,\,q]:=\{(1-t)r+tq\,:\, 0\leq t\leq 1\}$ is contained in the set 
$\gO$ and that $r\in B_M(p-M\nu(p))$  and $q\not\in B_M(p-M\nu(p))$. 
These 
imply that, for some $\tau\in(0,\,1]$, 
\[(1-\tau)r+\tau q\in \gO\cap \pl B_M(p-M\nu(p)).\] 
Replacing $q$ by $(1-\tau)r+\tau q$ if $\tau<1$, we may assume that 
$q\in\pl B_M(p-M\nu(p))$.

Since $\gO$ is open, we may assume by replacing $q$ by a nearby point, if needed, that
two vectors $\nu(p)$ and $q-p$ are linearly independent. In particular, we have 
$q\not=p$ and $q\not=p-2M\nu(p)$.   
Let $H$ be the plane passing through three points  
$p,\,q,\,p-M\nu(p)$. We set $\gD=\gO\cap H$ and $B_H=B_M(p-M\nu(p))\cap H$. 
Since $p-M\nu(p)\in H$, it is clear that $B_H$ is the planar open disk with center 
$p-M\nu(p)$ and radius $M$.

Fix $Q\in(R,\,M)$, so that $\gk>1/Q$.   
As in the proof of Lemma \ref{HH1} (ii), we can choose a neighborhood $V$ of $p$ 
so that 
\[
\gO\cap V \subset B_Q(p-Q\nu(p))\cap V, 
\] 
from which we find that
\beq\label{2}
\gD\cap V\subset H\cap B_Q(p-Q\nu(p)) \cap V. 
\eeq

We put $e_1=-\nu(p)$ and select 
a unit vector $e_2\in\R^N$, orthogonal to $e_1$, so that two vectors   
$e_1,\,e_2$ parallel to the plane $H$, that is, 
$H=\{p+x_1e_1+x_2e_2\,:\, x_1,x_2\in\R\}$. 

We select $(a,b)\in\R^2$ so that $q=p+ae_1+be_2$. Since  
$q\in\pl B_M(p-M\nu(p))\setminus\{p-2M\nu(p),\,p\}$, it follows that 
$0<a<2M$ and $b\not=0$. We may assume by replacing $e_2$ by $-e_2$, if needed, 
that $b<0$.

We set 
\[
\gD_2 =\{(x_1,x_2)\in\R^2\,:\, p+x_1e_1+x_2e_2\in\gD \},
\]
\[
g(x_1)=\inf\{x_2\in\R\,:\, (x_1,x_2)\in\gD_2 \}\ \ \text{ for }\ x_1\in(0,\,a]. 
\]
It is easily seen that $\gD_2$ is a strictly convex, bounded and open set, with $C^2$-boundary, in $\R^2$, 
that the line segment $\{t(a,b)\,:\, (0,\,1]\}$, 
connecting the origin and the point $(a,b)$, lies in the set $\gD_2$,  
that $g$ is locally Lipschitz continuous, convex function on $(0,\,a]$, 
and that the graph $\{(x_1,g(x_1))\,:\, x_1\in(0,a]\}$ 
is a subset of $\pl \gD_2$. The last two remarks together with the smoothness of $\gD_H$ 
implies that $g\in C^2((0,\,a])$.  

We consider the function $f_M\in C([0,\,a])$ defined by 
\[
f_M(x_1)=-\sqrt{M^2-(x_1-M)^2}.
\]
Obviously we have, for $(x_1,x_2)\in(0,\,a]\tim\R$,
\[
x_2>f_M(x_1) \ \ \ \text{ if }\ p+x_1e_1+x_2e_2\in B_H. 
\]
Similarly, we define $f_Q\in C([0,\,2Q])$ by
\[
f_Q(x_1)=-\sqrt{Q^2-(x_1-Q)^2}.
\]
By \erf{2}, if we define the function $h$ on $[0,\,a]$ by 
\[
h(x)=\bcases
0& \text{ if }x=0,\\[3pt]
g(x)&\text{ if }x\in(0,\,a],
\ecases
\]
then $f_M(0)=f_Q(0)=h(0)=0$ and $f_M(x_1)<f_Q(x_1)\leq h(x_1)$ for all $x_1\in(0,\,\gd]$ 
and some small $\gd>0$.   On the other hand, since 
$\{x_1(a,b)\,:\, x_1\in(0,\,1]\}\subset \gD_2$, we have $h(x_1)=g(x_1)\leq (b/a)x_1$ 
for all $x_1\in(0,\,a]$. It is now clear that $h\in C([0,\,1])$.  

Since $q=p+ae_1+be_2\in\gD\cap\pl B_M(p-M\nu(p))$, we have $h(a)=g(a)<b=f_M(a)$. 
Consider the function $\phi\in C([0,\,a])$ given by 
\[
\phi(x)=h(x)-f_M(x).   
\]
It follows that $\phi(0)=0$, $\phi(a)<0$ and $\phi(\gd)>0$. 
Accordingly, $\phi$ has a positive maximum at a point $d\in(0,\,a)$.
Hence, $\phi'(d)=0$ and $\phi''(d)\leq 0$. That is, we have 
$f_M'(d)=g'(d)$ and $f_M''(d)\geq g''(d)$, which shows that the curvature of 
the graph $g$ at $(d, g(d))$ is smaller than or equal to that of $f_M$, which is $1/M$. 
This shows that the planar curve $\pl_H\gD$ has curvature smaller than $1/R$ at 
$p+de_1+g(d)e_2$. Since the planar curve $\pl_H\gD$ has curvature larger than or equal to 
$\gk=1/R$ by Lemma \ref{HH1}, this is a contradiction.   
\eproof

\end{document}